\crefname{assumption}{assumption}{assumptions}
\crefname{figure}{figure}{figures}
\crefname{equation}{}{}
\setlist[enumerate,1]{label=(\arabic*),ref=\arabic*} %default
\setlist[enumerate,2]{label=(\alph*),ref=\theenumi(\alph*)}
\setlist[enumerate,3]{label=\roman*.,ref=\theenumii\roman*}
\DeclarePairedDelimiterXPP\set[1]{}{\{}{\}}{}{

#1}
\DeclarePairedDelimiter\abs{|}{|}
\DeclarePairedDelimiter\N{\|}{\|}
\DeclarePairedDelimiter\inner{\langle}{\rangle}
\def\ol{\overline}
\def\wtd{\widetilde}
\def\what{\widehat}
\def\ii{\iota} %{\mathrm{i}}
\def\ee{\mathrm{e}}
\DeclareMathOperator{\diff}{d\!}
\DeclareMathOperator{\diag}{diag}
\DeclareMathOperator{\rank}{rank}
\DeclareMathOperator{\sign}{sign}
\DeclareMathOperator{\HH}{H}
\DeclareMathOperator{\T}{T}
\DeclareMathOperator{\OO}{O}
\newtheorem{theorem}{Theorem}[section]
\newtheorem{lemma}{Lemma}[section]
\theoremstyle{definition}
\newtheorem{remark}{Remark}[section]
\numberwithin{equation}{section}
\numberwithin{figure}{section}
\numberwithin{table}{section}
\def\calI{\mathcal{I}}
\def\calG{\mathcal{G}}
\def\calS{\mathcal{S}}
\newcommand\bs[1]{\boldsymbol{ #1}}
\def\ba{\boldsymbol{a}}
\def\bx{\boldsymbol{x}}
\def\bk{\boldsymbol{k}}
\def\be{\boldsymbol{e}}
\def\bh{\boldsymbol{h}}
\def\by{\boldsymbol{y}}
\def\bz{\boldsymbol{z}}
\DeclareMathOperator{\inertia}{\mathit{p}}
\DeclareMathOperator{\nullspace}{\mathcal{N}}
\DeclareMathOperator{\nullbasis}{\mathrm{N}}
\DeclareMathOperator{\range}{\mathcal{R}}
\DeclareMathOperator{\domain}{\mathcal{D}}
\DeclareMathOperator{\brillouin}{\mathcal{B}}
\DeclareMathOperator{\neighbor}{\mathcal{L}}
\def\no{\abs}
\def\coo{\inner}
\begin{document}
\title{
Bifurcation Analysis of the Eigenstructure of the Discrete Single-curl Operator in Three-dimensional Maxwell's Equations with Pasteur Media
}

\author{
Xin Liang\thanks{%
Yau Mathematical Sciences Center,
Tsinghua University,
Beijing 100084, China.
E-mail: {\tt liangxinslm@tsinghua.edu.cn}.
Supported in part by the MOST.
}
\and
Zhen-Chen Guo\thanks{%
Department of Mathematics,
Nanjing University,
Nanjing 210093, Jiangsu, China.
E-mail: {\tt guozhenchen@nju.edu.cn}.
Supported in part by the MOST.
}
\and
Tsung-Ming Huang\thanks{%
Department of Mathematics,
National Taiwan Normal University,
Taipei 116, Taiwan.
E-mail: {\tt  min@ntnu.edu.tw}.
Supported in part by the Ministry of Science and Technology (MOST) 108-2115-M-003-012-MY2 and the National Center for Theoretical Sciences (NCTS) in Taiwan.
}
\and
Tiexiang Li\thanks{%
School of Mathematics, Southeast University, Nanjing 211189, Jiangsu, China. 
E-mail: {\tt txli@seu.edu.cn}.
Supported by the National Natural Science Foundation of China (NSFC) 11971105 and the Shing-Tung Yau Center of Southeast University.
}
\and
Wen-Wei Lin\thanks{%
Department of Applied Mathematics,
National Chiao Tung University,
Hsinchu 300, Taiwan.
E-mail: {\tt wwlin@am.nctu.edu.tw}.
Supported in part by the MOST 106-2628-M-009-004-, the NCTS,  and the ST Yau Centre at the National Chiao Tung University.
}
}
\date{\today}
\maketitle

\begin{abstract}
	This paper focuses on studying the bifurcation analysis of the eigenstructure of the $\gamma$-parameterized generalized eigenvalue problem ($\gamma$-GEP) arising in three-dimensional (3D) source-free Maxwell's equations with Pasteur media, where $\gamma$ is the magnetoelectric chirality parameter. For the weakly coupled case, namely, $\gamma < \gamma_{*} \equiv$ critical value, the $\gamma$-GEP is positive definite, which has been well-studied by Chern et.\ al, 2015. For the strongly coupled case, namely, $\gamma > \gamma_{*}$, the $\gamma$-GEP is no longer positive definite, introducing a totally different and complicated structure. For the critical strongly coupled case, numerical computations for electromagnetic fields have been presented by Huang et.\ al, 2018. In this paper, we build several theoretical results on the eigenstructure behavior of the $\gamma$-GEPs. We prove that the $\gamma$-GEP is regular for any $\gamma > 0$, and the $\gamma$-GEP has $2 \times 2$ Jordan blocks of infinite eigenvalues at the critical value $\gamma_{*}$.
%Quality Control Editor: Please consider "suggest", "indicate" or "show" as a more widely accepted alternative for "prove" here and throughout the manuscript.
Then, we show that the $2 \times 2$ Jordan block will split into a complex conjugate eigenvalue pair that rapidly goes down and up and then collides at some real point near the origin.
%Editor: Please ensure that the intended meaning has been maintained in the edits in the previous sentence.	
	 Next, it will bifurcate into two real eigenvalues, with one moving toward the left and the other to the right along the real axis as $\gamma$ increases.
%Editor: Please ensure that the intended meaning has been maintained in the edits in the previous sentence.	 
	  A newly formed state whose energy is smaller than the ground state can be created as $\gamma$ is larger than the critical value.
%Editor: Please ensure that the intended meaning has been maintained in the edits in the previous sentence and in similar instances throughout the manuscript (i.e., replacing "new born" with "newly formed" or "newly created").	  
	   This stunning feature of the physical phenomenon would be very helpful in practical applications. Therefore, the purpose of this paper is to clarify the corresponding theoretical eigenstructure of 3D Maxwell's equations with Pasteur media.
	
\end{abstract}

%\begin{highlights}
	%\item
%\end{highlights}

\smallskip
{\bf Key words.} Bifurcation analysis, Eigenstructure, Maxwell's equations, Pasteur media, Jordan block, Regular matrix pair.

\smallskip
{\bf AMS subject classifications.} Primary: 15A18, 15A22; Secondary  65F15.

\section{Introduction}\label{sec:introduction}

The eigenstructure of the discrete single-curl operator $\nabla\times$ is fundamental and vital for efficient numerical simulations of complex materials.
Here, complex materials, or physically, complex media, imply coupling effects between electric and magnetic fields.
Bianisotropic material is an important class of complex media (see, e.g., \cite[Section~5.3]{wela:2003}),
of which the coupling effects between electric and magnetic fields can be described by the Tellegen representation of the constitutive relations
\[
	\bs B =\bs \mu \bs H+\bs \zeta\bs E,\qquad \bs D=\bs \varepsilon \bs E+\bs \xi \bs H,
\]
where $\bs E,\bs H,\bs D,\bs B$ are the electric, the magnetic fields, the dielectric displacement, and the magnetic induction at the position $\bx$, respectively,
$\bs \mu$ is the permeability, $\bs \varepsilon$ is the permittivity, and $\bs \zeta,\bs \xi$ are magnetoelectric parameters.
Usually, $\bs \mu,\bs \varepsilon,\bs \zeta,\bs \xi$ are dyadics (a.k.a. second-order tensors) of dimension three.
In particular, a bianisotropic medium is also called a biisotropic medium, if $\bs \mu,\bs \varepsilon,\bs \zeta,\bs \xi$ are scalar dyadics, or equivalently,
\[
	\bs\mu = \mu\bs I, \quad
	\bs\varepsilon = \varepsilon\bs I, \quad
	\bs\zeta = \zeta\bs I, \quad
	\bs\xi = \xi\bs I,
\]
where $\bs I$ represents the identity dyadics.
Specifically,
a Pasteur medium (a.k.a. the reciprocal chiral medium) is a type of biisotropic media, where
\begin{equation}
	\xi=\ii\gamma,\qquad
	\zeta=-\ii\gamma,\qquad
	\gamma\ge0. \label{eq:Pasteur}
\end{equation}

Mathematically, the propagation of electromagnetic waves in bianisotropic media is modeled by the three-dimensional (3D) frequency domain source-free Maxwell's equations with the constitutive relations
\begin{align*}
	\nabla\times \bs E = \hphantom{-}\ii \omega \bs B, \qquad& \nabla\cdot \bs B=0,\\
	\nabla\times \bs H = -\ii \omega \bs D, \qquad& \nabla\cdot \bs D=0,
\end{align*}
or equivalently,
\begin{equation*}%\label{eq:maxwell-eq:blk}
	\begin{bmatrix}
		\nabla\times & 0\\
		0 &\nabla\times \\
	\end{bmatrix}
	\begin{bmatrix}
		\bs E \\ \bs H\\
	\end{bmatrix}
	=
	\ii\omega
	\begin{bmatrix}
		\bs \zeta & \bs \mu\\
		-\bs \varepsilon & -\bs \xi\\
	\end{bmatrix}
	\begin{bmatrix}
		\bs E \\ \bs H\\
	\end{bmatrix}
	,\qquad
	\begin{bmatrix}
		\nabla\cdot{} & 0\\
		0 &\nabla\cdot{} \\
	\end{bmatrix}
	\begin{bmatrix}
		\bs D \\ \bs B \\
	\end{bmatrix}
	=0,
\end{equation*}
where $\omega$ is the frequency.
The Bloch theorem, from the theorem named after F.~Bloch (see, e.g., \cite[p. 167]{kitt:2005}), implies that the solutions of the Schr\"odinger equation for a periodic potential must be of a quasi-periodic form, stating that

%Editor: Please ensure that the intended meaning has been maintained in the edits in the previous sentence.
\begin{quote}
	The eigenfunctions of the wave equation for a periodic potential are the product of a plane wave $\exp(\ii 2\pi \bk\cdot\bx)$ times a function $u_{\bk}(\bx)$ with the periodicity of the crystal lattice.
\end{quote}
%See, e.g., \cite[p.~167]{kitt:2005}.
Based on the Bloch theorem, the Bloch eigenvectors $\bs E$ and $\bs H$ on any crystal lattice, satisfying the quasi-periodic conditions 
\[
	\bs E(\bx+\ba_\ell)=\bs E(\bx)\exp(\ii2\pi \bk\cdot\ba_\ell),\qquad
	\bs H(\bx+\ba_\ell)=\bs H(\bx)\exp(\ii2\pi \bk\cdot\ba_\ell)
\]
are of interest, where $2\pi\bk$ is the Bloch wave vector in the first Brillouin zone $\brillouin$, and $\ba_\ell,\ell=1,2,3$ are the lattice translation vectors (see, e.g., \cite[p.~34]{jojw:2008}).

Using Yee's scheme \cite{yee:1966}, a finite difference discretization that satisfies the source-free conditions and the quasi-periodicity conditions naturally, the discretized Maxwell's equations are
\begin{equation}\label{eq:maxwell-eq:discrete}
	\begin{bmatrix}
			C & 0\\
			0 & C^{\HH}\\
		\end{bmatrix}
		\begin{bmatrix}
			\be \\ \bh\\
		\end{bmatrix}
		=
		\ii\omega
		\begin{bmatrix}
			\zeta_d & \mu_d\\
			-\varepsilon_d & -\xi_d\\
		\end{bmatrix}
		\begin{bmatrix}
			\be \\ \bh\\
		\end{bmatrix}
		,
\end{equation}
where $\mu_d$, $\varepsilon_d$, $\xi_d$ and $\zeta_d$ are diagonal matrices, and  $C$ is special structured, facilitating the introduction of the fast Fourier transform (FFT) to accelerate numerical simulations \cite{chhh:2013,huhl:2013,hllllt:2018} (see \cref{eq:part_mu_eps_xi_zeta}-\cref{eq:J23} below, for details).

For the Pasteur media, the matrix pair in \cref{eq:maxwell-eq:discrete} is positive definite when the parameter $\gamma$ in \cref{eq:Pasteur} is small, but it becomes an indefinite pair as $\gamma$ becomes larger (see below).
The weakly coupled case, namely, the case in which the matrix pair is positive definite, has been analyzed by Chern et al.~\cite{chhh:2013} in 2015. 
For the strongly coupled case, the matrix pair is no longer a positive-definite matrix pair, introducing a totally different and complicated structure. For the critical strongly coupled case, numerical computations for the electromagnetic fields $\bs E$ and $\bs H$ have been studied by Huang et.\ al.~\cite{hulcl:2019}, but lack of theory makes it difficult to guarantee that the numerical results are valid and reliable.

In this paper, we build several theoretical results on the eigenstructure behavior of the discrete single-curl operator in 3D Maxwell's equations for Pasteur media:
\begin{enumerate}
	\item[(a)] The matrix pair in \cref{eq:maxwell-eq:discrete} is always regular regardless of how large $\gamma$ is;
	\item[(b)] The matrix pair  \cref{eq:maxwell-eq:discrete} has $2 \times 2$ Jordan blocks of the infinite eigenvalues at the critical value $\gamma = \gamma_{*}$. Then, the $2\times 2$ Jordan block will split into a pair of complex conjugate eigenvalues that move rapidly down and up and collide at some real point near the origin to form an associated $2 \times 2$ Jordan block of a real eigenvalue;
	\item[(c)] This $2 \times 2$ Jordan block  will bifurcate into two real eigenvalues such that one moves toward the left and the other to the right along the real axis;
	\item[(d)] A newly formed state whose energy is smaller than the ground state can be created as $\gamma$ is larger than the critical value $\gamma_*$.
\end{enumerate}
The feature exhibited by the physical phenomenon derived from the above three points (b)-(d) is an astonishing finding. 
This discovery would be very useful in practical applications. However, the corresponding theoretical eigenstructure behavior should first be clarified.

%\smallskip
{\bf Notation.}
$\ii=\sqrt{-1}$ is the imaginary unit; $\ee=\exp(1)$ is Euler's number. 
For any $n\in\mathbb{N}$, $\eta_n=\ee^{\frac{ 2\pi\ii}{n}}$ is an $n$th root of unity.
For any index set $\calI$, $I^{\calI}$ denotes the diagonal matrix whose $i$th diagonal entry is $1$ for all $i\in\calI$ and $0$ otherwise;
$I^{\calI}_\sigma$ denotes the matrix consisting of all nonzero columns of $I^{\calI}$;
and $\no{\calI}$ denotes the number of its elements.
$I_n$ is the identity matrix of size $n$;
in particular, $I_0$ is an empty matrix;
$e_i$ is the $i$th column of $I_n$.
For a matrix $X$, $X^{\T}$ and $X^{\HH}$ are its transpose and conjugate transpose, respectively;
$\nullspace(X)=\set{v: Xv=0}$ is the kernel of $X$.
For matrices $X,Y$, $X\otimes Y$ is their Kronecker product;
%For matrices $X,Y$, $X\otimes Y$ and $X\circ Y$ are their Kronecker and Hadamard products, respectively;
$X\succeq Y$ means that $X-Y$ is positive semidefinite, similarly for ``$\succ$'', ``$\preceq$'', and ``$\prec$''. 
%$X\cong Y$ means $X$ and $Y$ are congruent.
For $m\in \mathbb{N}$, $\alpha\in \mathbb{C}$ and $X\in\mathbb{C}^{n\times n}$, write
\[
\addtolength\arraycolsep{-3pt}
V_{m\times n}(\alpha) :=
\begin{bmatrix}
	1 & 1 & \cdots & 1 \\
	\alpha & \alpha^2 & \cdots & \alpha^n \\
	\vdots & \vdots &  & \vdots \\
	\alpha^{m-1} & \alpha^{2(m-1)} & \cdots & \alpha^{n(m-1)} \\
\end{bmatrix}, \quad
K_m(X) :=
\begin{bmatrix}
	  & I_n  & & \\
	&  & \ddots & \\
	& &   & I_n  \\
	X & & &  \\
\end{bmatrix}_{mn\times mn}.
\addtolength\arraycolsep{3pt}
\]
In particular, write $V_m(\alpha)= V_{m\times 1}(\alpha)$,
$D_m(\alpha)=\alpha\diag( V_m(\alpha))$.
%$ V_m(\alpha) :=
%\begin{bmatrix}
    %1  \\
    %\alpha  \\
    %\vdots  \\
	%\alpha^{m-1} \\
%\end{bmatrix}
%$,

\section{Preliminaries}\label{sec:preliminary}
\subsection{Discretization}\label{ssec:discretization}
It is well known from crystallography that crystal structures can be classified into 14 Bravais lattices \cite{wikipedia1,lattices}. Because of various lattices, the discretized single-curl operators $C$ and $C^{\HH}$ in \cref{eq:maxwell-eq:discrete} on the electric and magnetic fields, respectively, may have different forms. 
In the discretization process,
$\domain_i$ and $\domain_o$ denote the sets including the indices of all vertices inside and outside, respectively, the medium
%by $\domain_o$ denote the set including the indices of all vertices outside the medium 
(usually the domain would be of vacuum or air but could be of another medium).
Then, $\domain=\domain_i\cup\domain_o$ is the discretization grid.
%Clearly the first Brillouin zone $\brillouin\subset \domain$ should be satisfied.
Moreover, $n_1,n_2,n_3$ denote the numbers of grid vertices in the $x,y,z$ directions, respectively,
and $\delta_1,\delta_2,\delta_3$ for the associated mesh lengths. 
Write $n=n_1n_2n_3$.

The discretized Maxwell's equations are \cref{eq:maxwell-eq:discrete},
in which $\zeta_d$, $\mu_d$, $\varepsilon_d$, $\xi_d$ are decided by the shape of the medium, and $C$ is given by Yee's scheme. 
For the former, $\zeta_d$, $\mu_d$, $\varepsilon_d$, $\xi_d$ may not have the same value in three directions of some boundary points. However, for convenience of notation,
in this paper, we consider the case that 
%\marginpartiny{$\varepsilon_d$ is not exact the same in three directions}
\begin{subequations} \label{eq:part_mu_eps_xi_zeta}
\begin{align}
	\mu_d &= \mu_{io} I_3\otimes I,
	\qquad
	\varepsilon_d = I_3\otimes [\varepsilon_o I^{(o)}+\varepsilon_i I^{(i)}], \label{eq:part_mu_eps}
	\\
	\zeta_d &= -\ii\gamma I_3\otimes I^{(i)},
	\qquad
	\xi_d = \ii\gamma I_3\otimes I^{(i)}, \label{eq:part_xi_zeta}
\end{align}
\end{subequations}
where $\gamma$ is the chirality,
$\varepsilon_i,\varepsilon_o$ are the permittivities inside and outside the medium, respectively, and
$\mu_{io}\equiv 1$ is the permeability. For simplicity, here we denote $I^{(i)} \equiv I^{\domain_i}$ and $I^{(o)} \equiv I^{\domain_o}$.

For the latter, we refer the readers to \cite{hllllt:2018} in order to peruse the details of the whole discretization process.
We provide some basic but important results to ensure this paper is self-contained.
According to the type of the lattice, one of the 14 Bravais lattices, which represent all kinds of crystals, $C$, the discretized single-curl operator on the electric field by Yee's scheme, may have different forms which can be uniformly written as
\begin{equation}\label{eq:C}
	C = 
		\begin{bmatrix}
			0 & -C_3 & C_2 \\
			C_3 & 0 & -C_1 \\
			-C_2 & C_1 & 0 
		\end{bmatrix},
\end{equation}
where 
\begin{subequations}\label{eq:C123}
	\begin{align}
		%C_1 & = \delta_1^{-1} I_{n_3}\otimes I_{n_2}\otimes K_1, & K_1 & = \scrK(\ee^{\ii2\pi\bk\cdot\ba_1},\hphantom{J_{1,1}} n_1) -I_{n_1}, \\
		%C_2 & = \delta_2^{-1} I_{n_3}\otimes K_2,                & K_2 & = \scrK(\ee^{\ii2\pi\bk\cdot\ba_2}J_{1,1}, n_2) -I_{n_2n_1},         \\
		%C_3 & = \delta_3^{-1} K_3,                               & K_3 & = \scrK(\ee^{\ii2\pi\bk\cdot\ba_3}J_2,\hphantom{_{1,}} n_3) -I_{n_3n_2n_1},      
		C_1 & = \delta_1^{-1}[-I_n + I_{n_3}\otimes I_{n_2}\otimes K_{n_1}(\ee^{\ii2\pi\bk\cdot\ba_1})], \\
		C_2 & = \delta_2^{-1}[-I_n + I_{n_3}\otimes K_{n_2}(\ee^{\ii2\pi\bk\cdot\ba_2}J_1)],         \\
		C_3 & = \delta_3^{-1}[-I_n + K_{n_3}(\ee^{\ii2\pi\bk\cdot\ba_3}J_2)],      
	\end{align}
\end{subequations}
with $J_1=J_{1,1}$ and 
\begin{subequations}\label{eq:J23}
	\begin{align}
		J_2 &= \ee^{\ii2\pi\bk\cdot\rho_2\ba_2}
		\begin{bmatrix}
			&\ee^{-\ii2\pi\bk\cdot\ba_2}I_{m_2}\otimes J_{1,3} \\
			I_{n_2-m_2}\otimes J_{1,2}& \\
		\end{bmatrix},
		\\
		J_{1,\ell} &= \ee^{\ii2\pi\bk\cdot\rho_{1,\ell}\ba_1}
		\begin{bmatrix}
			& \ee^{-\ii2\pi\bk\cdot\ba_1} I_{m_{1,\ell}}  \\
			I_{n_1-m_{1,\ell}}  &  \\
		\end{bmatrix}
		,\qquad \ell=1,2,3.
	\end{align}
\end{subequations}
Here,
%$(\rho_{1,2},\rho_{1,3})\in\set{(\rho_{1,2},\rho_{1,2}-\rho_{1,1}),(\rho_{1,2},\rho_{1,2}-\rho_{1,1}+1),(\rho_{1,3}+\rho_{1,1},\rho_{1,3}),(\rho_{1,3}+\rho_{1,1}-1,\rho_{1,3})}$
$\rho_2,\rho_{1,1}\in\set{0,1}$, $\rho_{1,2},\rho_{1,3}\in\set{-1,0,1,2}$ satisfying $\rho_{1,2}-\rho_{1,3}-\rho_{1,1}\in\set{0,1}$,
and $m_2\in[0,n_2]\cap \mathbb{N}$, $m_{1,\ell}\in[0,n_1]\cap \mathbb{N}$ for $\ell=1,2,3$ satisfying $m_{1,2}-m_{1,3}-m_{1,1}\in\set{0,n_1}$.
Write $m_1:=m_{1,1}$, $\rho_1:=\rho_{1,1}$,
and $\what \rho_1:=\rho_2\rho_1+\rho_2\rho_{1,2}+(1-\rho_2)\rho_{1,3}$.

$C_1, C_2, C_3$ are simultaneously diagonalizable by a unitary matrix, which is guaranteed by \cref{lm:C:simul-diag-u}.
\begin{theorem}[{\cite{hllllt:2018}}]\label{lm:C:simul-diag-u}
	$C_1, C_2, C_3$ are simultaneously diagonalizable by the unitary matrix 
	$T=[t_\ell]_{\ell=1,\dots,n}$
	in the forms %$C_iT=T\Lambda_i$ for $i=1,2,3$,
	%\begin{align*}
		%\Lambda_1:=T^{\HH}C_1T&= \delta_1^{-1}[-I_n+ I_{n_3}\otimes I_{n_2}\otimes \eta_{n_1}^{\bk\cdot\what\ba_1+1}\diag( V_{n_1}(\eta_{n_1}))], \\
		%\Lambda_2:=T^{\HH}C_2T&= \delta_2^{-1}[-I_n+ I_{n_3}\otimes \eta_{n_2}^{\bk\cdot\what\ba_2+1}\diag( V_{n_2}(\eta_{n_2}))\otimes \eta_{n_1n_2}^{-m_1}\diag( V_{n_1}(\eta_{n_1n_2}^{-m_1}))], \\
		%\Lambda_3:=T^{\HH}C_3T&= \delta_3^{-1}[-I_n+ 
		%\eta_{n_3}^{\bk\cdot\what\ba_3+1}\diag( V_{n_3}(\eta_{n_3}))
		%\otimes \eta_{n_3n_2}^{-m_2}\diag( V_{n_2}(\eta_{n_3n_2}^{-m_2}))
		%\otimes \eta_{n_1n_3}^{-\what m_1}\eta_{n_1n_2n_3}^{m_1m_2}\diag( V_{n_1}(\eta_{n_1n_3}^{-\what m_1}\eta_{n_1n_2n_3}^{m_1m_2}))], 
	%\end{align*}
	\begin{subequations}\label{eq:Lambda}
		\begin{align}
			\Lambda_1:=T^{\HH}C_1T&= \delta_1^{-1}[-I_n+\eta_{n_1}^{\bk\cdot\what\ba_1} I_{n_3}\otimes I_{n_2}\otimes D_{n_1}(\eta_{n_1})], \\
			\Lambda_2:=T^{\HH}C_2T&= \delta_2^{-1}[-I_n+\eta_{n_2}^{\bk\cdot\what\ba_2} I_{n_3}\otimes D_{n_2}(\eta_{n_2})\otimes D_{n_1}(\eta_{n_1n_2}^{-m_1})], \\
			\Lambda_3:=T^{\HH}C_3T&= \delta_3^{-1}[-I_n+\eta_{n_3}^{\bk\cdot\what\ba_3} 
			D_{n_3}(\eta_{n_3})
			\otimes D_{n_2}(\eta_{n_3n_2}^{-m_2})
			\otimes D_{n_1}(\eta_{n_1n_3}^{-\what m_1}\eta_{n_1n_2n_3}^{m_1m_2})], 
		\end{align}
	\end{subequations}
	where 
	\begin{equation}\label{eq:T}
		\!\! t_{\coo{i_1,i_2,i_3}}\!=\! \tfrac{1}{\sqrt{n}}
		V_{n_3}(\eta_{n_3}^{\bk\cdot\what\ba_3+i_3}\eta_{n_2n_3}^{-m_2i_2}\eta_{n_1n_3}^{-\what m_1 i_1}\eta_{n_1n_2n_3}^{m_1m_2i_1})\otimes V_{n_2}(\eta_{n_2}^{\bk\cdot\what\ba_2+i_2}\eta_{n_1n_2}^{-m_1i_1})\otimes V_{n_1}(\eta_{n_1}^{\bk\cdot\what\ba_1+i_1}) 
	\end{equation}
	%where $t_{\coo{i_1,i_2,i_3}}= \frac{1}{\sqrt{n}}z_{i_1,i_2,i_3}\otimes y_{i_1,i_2}\otimes x_{i_1}$ with
	%\begin{align*}
		%x_{i_1}         & =  V_{n_1}(\eta_{n_1}^{\bk\cdot     \ba_1})\circ  V_{n_1}(\eta_{n_1}^{i_1}),                                                \\
		%y_{i_1,i_2}     & =  V_{n_2}(\eta_{n_2}^{\bk\cdot\what\ba_2})\circ  V_{n_2}(\eta_{n_2}^{i_2})\circ  V_{n_2}(\eta_{n_1n_2}^{-m_1i_1}), \\
		%z_{i_1,i_2,i_3} & =  V_{n_3}(\eta_{n_3}^{\bk\cdot\what\ba_3})\circ  V_{n_3}(\eta_{n_3}^{i_3})\circ  V_{n_3}(\eta_{n_2n_3}^{-m_2i_2})\circ  V_{n_3}(\eta_{n_1n_2n_3}^{-\what m_1 i_1}),
	%\end{align*}
	for $i_1,i_2,i_3\in \mathbb{Z}$, $\coo{i_1,i_2,i_3}$ is defined as
\[
	\coo{i_1,i_2,i_3}:=(i_3'-1)n_1n_2+(i_2'-1)n_1+i_1',%\qquad 
	%\coo{i_1;i_2;i_3}=(i_1'-1)n_3n_2+(i_2'-1)n_3+i_3',
\]
where
$i_\ell'=i_\ell+t_\ell n_\ell,
1\le i_\ell'\le n_\ell,
t_\ell\in\mathbb{Z}, \ell=1,2,3$, and
	\begin{align*}
		\what \ba_1&=\ba_1,\\
		\what \ba_2&=\ba_2+(\rho_1-\frac{m_1}{n_1})\what\ba_1,\\
		\what \ba_3&=\ba_3+(\rho_2-\frac{m_2}{n_2})\what \ba_2+[\what \rho_1 -\frac{\what m_1}{n_1}-\rho_2(\rho_1-\frac{m_1}{n_1})]\what\ba_1,\\
		\what m_1&=\rho_2m_1+\rho_2m_{1,2}+(1-\rho_2)m_{1,3}.
	\end{align*}
\end{theorem}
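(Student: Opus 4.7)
The plan is to exploit the nested $K_m(\cdot)$ structure of $C_1, C_2, C_3$: each $C_\ell$ has the form $\delta_\ell^{-1}(-I_n + K_{n_\ell}(X_\ell))$ for some inner matrix $X_\ell$, so the simultaneous diagonalization reduces to simultaneously diagonalizing the three block-shift matrices $K_{n_\ell}(X_\ell)$. The workhorse is the identity
\[
	K_m(X)\bigl(V_m(\beta)\otimes w\bigr) = \beta\,\bigl(V_m(\beta)\otimes w\bigr)
	\quad \text{whenever } Xw = \beta^m w,
\]
verified by direct block multiplication: the first $m-1$ block rows shift $V_m(\beta)$ down by one, and the last block produces $Xw=\beta^m w$, precisely the wrap-around entry of $\beta\,V_m(\beta)\otimes w$. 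I would apply this identity in layers, peeling off one coordinate at a time.

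For $C_1$, the inner data $X_1=\ee^{\ii 2\pi\bk\cdot\ba_1}$ is a scalar, so the identity yields eigenvalues $\beta=\eta_{n_1}^{\bk\cdot\ba_1+i_1}$ with eigenvectors $V_{n_1}(\beta)$ directly, matching $\Lambda_1$ and the innermost Vandermonde factor of $t_{\coo{i_1,i_2,i_3}}$ since $\what\ba_1=\ba_1$. For $C_2$ I would first show that $V_{n_1}(\gamma)$ is an eigenvector of each $J_{1,\ell}$ whenever $\gamma^{n_1}=\ee^{\ii 2\pi\bk\cdot\ba_1}$, by a direct computation using the permutation-plus-phase structure of $J_{1,\ell}$; this produces the eigenvalue $\ee^{\ii 2\pi\bk\cdot\rho_{1,\ell}\ba_1}\gamma^{-m_{1,\ell}}$. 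Then applying the key identity to $K_{n_2}(\ee^{\ii 2\pi\bk\cdot\ba_2}J_1)$ with $w=V_{n_1}(\gamma)$ yields the eigenvector $V_{n_2}(\beta)\otimes V_{n_1}(\gamma)$ with $\beta^{n_2}=\ee^{\ii 2\pi\bk\cdot\ba_2}\ee^{\ii 2\pi\bk\cdot\rho_1\ba_1}\gamma^{-m_1}$; a short algebraic manipulation using $\what\ba_2=\ba_2+(\rho_1-m_1/n_1)\ba_1$ recovers $\beta=\eta_{n_2}^{\bk\cdot\what\ba_2+i_2}\eta_{n_1n_2}^{-m_1 i_1}$, which is exactly the middle factor of $t_{\coo{i_1,i_2,i_3}}$ and $\Lambda_2$.

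The hard step is $C_3$, whose inner matrix $J_2$ is anti-block-diagonal with \emph{two different} inner blocks $I_{m_2}\otimes J_{1,3}$ and $I_{n_2-m_2}\otimes J_{1,2}$. Acting $J_2$ on $V_{n_2}(\beta)\otimes V_{n_1}(\gamma)$ yields two candidate scalar multipliers, one coming from each block, and requiring them to agree forces the constraint $\beta^{n_2}\lambda_3=\ee^{\ii 2\pi\bk\cdot\ba_2}\lambda_2$ with $\lambda_\ell=\ee^{\ii 2\pi\bk\cdot\rho_{1,\ell}\ba_1}\gamma^{-m_{1,\ell}}$. The delicate point is to invoke the two compatibility conditions $\rho_{1,2}-\rho_{1,3}-\rho_{1,1}\in\set{0,1}$ and $m_{1,2}-m_{1,3}-m_{1,1}\in\set{0,n_1}$ in tandem, so that the extra phase $\gamma^{n_1}=\ee^{\ii 2\pi\bk\cdot\ba_1}$ produced by the $m$-shift cancels the unit $\rho$-shift; this collapses $\lambda_2/\lambda_3$ to a clean expression and gives a closed form for $\beta^{n_2}$. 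One final application of the $K_{n_3}$ identity, with inner data $\ee^{\ii 2\pi\bk\cdot\ba_3}J_2$, then supplies the outer Vandermonde factor $V_{n_3}(\alpha)$, and consolidating the accumulated phases in terms of $\what\ba_3$, $\what m_1$, and the cross-term $m_1m_2$ reproduces both the outermost factor of $t_{\coo{i_1,i_2,i_3}}$ and the diagonal entries of $\Lambda_3$.

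It remains to verify that $T=[t_{\coo{i_1,i_2,i_3}}]$ is unitary. Writing $t_{\coo{i_1,i_2,i_3}}=n^{-1/2}\,V_{n_3}(\eta_{n_3}^{r_{n_3}})\otimes V_{n_2}(\eta_{n_2}^{r_{n_2}})\otimes V_{n_1}(\eta_{n_1}^{r_{n_1}})$ with exponents $r_{n_\ell}$ depending affinely on $(i_1,i_2,i_3)$, the inner product $\inner{t_{\coo{i_1,i_2,i_3}},t_{\coo{i_1',i_2',i_3'}}}$ factors into three geometric sums $\sum_{k=0}^{n_\ell-1}\eta_{n_\ell}^{k(r_{n_\ell}'-r_{n_\ell})}$. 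Since $r_{n_1}'-r_{n_1}=i_1'-i_1$, $r_{n_2}'-r_{n_2}=(i_2'-i_2)-m_1(i_1'-i_1)/n_1$, and similarly for $r_{n_3}$, whenever the index triples differ at least one of these sums has ratio $\ne 1$ and vanishes, while the diagonal entries contribute $n_1 n_2 n_3=n$ which is cancelled by $1/\sqrt n$, completing the proof.
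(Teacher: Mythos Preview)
The paper does not give its own proof of this result; it is quoted verbatim from \cite{hllllt:2018} as a preliminary, so there is nothing in the present paper to compare your argument against. Your approach---the eigenvector identity $K_m(X)\bigl(V_m(\beta)\otimes w\bigr)=\beta\,V_m(\beta)\otimes w$ whenever $Xw=\beta^m w$, applied in three layers from the innermost coordinate outward---is the natural and correct one for these nested block-circulants, and it is essentially the argument one finds in \cite{hllllt:2018}.

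Two small points worth tightening. First, in the $J_2$ step the consistency condition you derive amounts to $\lambda_2=\lambda_1\lambda_3$, which holds only when the two shifts $\rho_{1,2}-\rho_{1,3}-\rho_{1,1}\in\{0,1\}$ and $(m_{1,2}-m_{1,3}-m_{1,1})/n_1\in\{0,1\}$ are \emph{equal}, not merely each in $\{0,1\}$ independently; this correlation is indeed part of the lattice construction in \cite{hllllt:2018}, but ``in tandem'' should be made explicit. Second, in the unitarity argument your exponent difference $r_{n_2}'-r_{n_2}=(i_2'-i_2)-m_1(i_1'-i_1)/n_1$ need not be an integer when $i_1'\ne i_1$, so the $n_2$-geometric sum need not vanish on its own; the fix is to order the three factors from innermost out, so that whenever $i_1'\ne i_1$ the $V_{n_1}$ sum already kills the product and the outer sums are irrelevant.
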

Note that
\begin{enumerate}
	\item[(a)] $\what \ba_1,\what \ba_2,\what \ba_3$ is an orthogonal basis of $\ba_1,\ba_2,\ba_3$,
		and $\N{\what \ba_\ell}_2 = l_\ell = n_\ell\delta_\ell$ for $\ell=1,2,3$.
	\item[(b)] 
		$2\pi\bk\in \mathcal{B}$ implies 
		\begin{equation}\label{eq:k.a}
			%b_{\ell}-1\le \bk\cdot\what\ba_\ell\le b_{\ell},\qquad \ell=1,2,3,
			b_{l,\ell}\le\bk\cdot\ba_\ell\le b_{u,\ell},\qquad \ell=1,2,3,
		\end{equation}
		%where $b_{\ell}\in[0,5/6]$.
		where $b_{u,\ell}-b_{l,\ell}\le1$, $b_{l,\ell}\in[-2/3,0], b_{u,\ell}\in[0,5/6]$.
\end{enumerate}

As a result, the singular value decomposition of $C$ can be calculated along the way in \cite{chhh:2013}, which is shown in \cref{lm:C:SVD}. 
\begin{theorem}[{\cite{chhh:2013,hllllt:2018}}]\label{lm:C:SVD}
\addtolength\arraycolsep{-4pt}
	If $\bk\ne0$, then:
	\begin{description}
		\item[\em (a)] $\Lambda_q:=\Lambda_1^{\HH}\Lambda_1+\Lambda_2^{\HH}\Lambda_2+\Lambda_3^{\HH}\Lambda_3\succ0$;
		\item[\em (b)] $\Lambda_p:= \left( 
				\begin{bmatrix}
					0 & -\tau_3 & \tau_2 \\
					\tau_3 & 0 & -\tau_1 \\
					-\tau_2 & \tau_1 & 0 \\
				\end{bmatrix}
			\otimes I_n \right)
			\begin{bmatrix}
				\Lambda_1^{\HH}\\
				\Lambda_2^{\HH}\\
				\Lambda_3^{\HH}\\
			\end{bmatrix}
			$
			is of full column rank, provided $\tau_1\delta_1$, $\tau_2\delta_2$, $\tau_3\delta_3$ are distinct;
		\item[\em (c)] the singular value decomposition (SVD) of $C$ is
				%\begin{subequations}
			\begin{align}
				C &= (I_3\otimes T)
				%\left[ \begin{array}{cc|c}
					%-\ol{\Pi_2} & \ol{\Pi_1} & \ol{\Pi_0}
				%\end{array} \right]
				%\left[ \begin{array}{cc|c}
					%\Lambda_q^{1/2} & & \\
					%& \Lambda_q^{1/2} & \\ \hline
					%& & 0 \\
				%\end{array} \right]
				%\left[ \begin{array}{cc|c}
					%\Pi_1 & \Pi_2 & \Pi_0
				%\end{array} \right]^{\HH}
				\begin{bmatrix}
					-\ol{\Pi_2} & \ol{\Pi_1} & \ol{\Pi_0}
				\end{bmatrix}
				\begin{bmatrix}
					\Lambda_q^{1/2} & & \\
					& \Lambda_q^{1/2} & \\
					& & 0 \\
				\end{bmatrix}
				\begin{bmatrix}
					\Pi_1 & \Pi_2 & \Pi_0
				\end{bmatrix}^{\HH}
				(I_3\otimes T^{\HH}) \nonumber \\
				&\equiv \begin{bmatrix}
				     P_r & P_0
				\end{bmatrix} \begin{bmatrix}
				     \Sigma & 0 \\ 0 & 0
				\end{bmatrix} \begin{bmatrix}
				     Q_r & Q_0
				\end{bmatrix}^{\HH} =P_r \Sigma Q_r^{\HH}, \label{eq:SVD_C}
			\end{align}
			where
			\[
				\Pi_0 = 
				\begin{bmatrix}
					\Lambda_1\\
					\Lambda_2\\
					\Lambda_3\\
				\end{bmatrix}
				\!
				\Lambda_q^{-\frac{1}{2}}
				,%\quad
				\Pi_2=\Lambda_p(\Lambda_p^{\HH}\Lambda_p)^{-\frac{1}{2}}
				,%\quad
				\ol{\Pi_1}=
				\begin{bmatrix}
					0 & -\Lambda_3 & \Lambda_2 \\
					\Lambda_3 & 0 & -\Lambda_1 \\
					-\Lambda_2 & \Lambda_1 & 0 \\
				\end{bmatrix}
				\!
				\Lambda_p(\Lambda_p^{\HH}\Lambda_p\Lambda_q)^{-\frac{1}{2}}. %\label{eq:SVD_Pi_i}
			\]
			%\end{subequations}
	\end{description}
\addtolength\arraycolsep{4pt}
\end{theorem}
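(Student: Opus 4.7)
The plan is a position-wise analysis using the simultaneous diagonalization of \cref{lm:C:simul-diag-u}: at each index $\langle i_1,i_2,i_3\rangle$ let $\lambda_\ell$ denote the $\ell$th common diagonal entry of $\Lambda_\ell$ and set $\bs\lambda=(\lambda_1,\lambda_2,\lambda_3)^{\T}\in\mathbb{C}^3$.

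For part (a), $\Lambda_q$ is diagonal with entry $\N{\bs\lambda}^2$ at each position, so $\Lambda_q\succ0$ is equivalent to $\bs\lambda\ne 0$ at every position. Suppose the contrary. From \cref{eq:Lambda}, $\lambda_\ell=\delta_\ell^{-1}(-1+\theta_\ell)$ where $\theta_\ell$ is a root of unity depending on $\bk\cdot\what\ba_\ell$ and the integer indices; $\lambda_\ell=0$ becomes an explicit Diophantine condition. Working sequentially, the $\lambda_1=0$ condition combined with the Brillouin-zone bound \cref{eq:k.a} (which gives $\bk\cdot\ba_1\in(-1,1)$) yields $\bk\cdot\ba_1=0$ and $i_1=n_1$; substituting into $\lambda_2=0$ writes $\bk\cdot\ba_2$ as an integer forced to lie in $(-1,1)$, whence $\bk\cdot\ba_2=0$; the same treatment of $\lambda_3=0$ forces $\bk\cdot\ba_3=0$. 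Hence $\bk=0$, a contradiction.

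For part (b), inspection shows that the $j$th column of $\Lambda_p$ (with $j=\langle i_1,i_2,i_3\rangle$) is supported only in rows $j,n+j,2n+j$, with entries equal to the components of $\bs\tau\times\ol{\bs\lambda}$ for $\bs\tau=(\tau_1,\tau_2,\tau_3)$. Distinct columns live on disjoint rows, so full column rank reduces to showing $\bs\tau\not\parallel\ol{\bs\lambda}$ at every position. Parallelism would give $\ol\theta_\ell=1+c\tau_\ell\delta_\ell$ for some $c\in\mathbb{C}$, and $|\ol\theta_\ell|=1$ then forces the real numbers $z_\ell=\tau_\ell\delta_\ell$ to satisfy $|1+cz|=1$. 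The case $c=0$ contradicts part (a); for $c=a+b\ii\ne 0$ the real equation $(a^2+b^2)z^2+2az=0$ has at most two real roots, incompatible with three distinct values $\tau_1\delta_1,\tau_2\delta_2,\tau_3\delta_3$.

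For part (c), \cref{lm:C:simul-diag-u} gives $C=(I_3\otimes T)\,M\,(I_3\otimes T^{\HH})$ with $M=\begin{bmatrix}0 & -\Lambda_3 & \Lambda_2\\ \Lambda_3 & 0 & -\Lambda_1\\ -\Lambda_2 & \Lambda_1 & 0\end{bmatrix}$. After the natural position permutation $M$ is block diagonal with $3\times3$ blocks of Gram matrix $\N{\bs\lambda}^2 I_3-\bs\lambda\bs\lambda^{\HH}$, whose eigenvalues are $\N{\bs\lambda}^2,\N{\bs\lambda}^2,0$ with null direction $\bs\lambda$. Thus the singular values of $C$ are $\Lambda_q^{1/2}$ (twice) and $0$ (once). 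The right nullspace is generated by $\Pi_0=[\Lambda_1;\Lambda_2;\Lambda_3]\Lambda_q^{-1/2}$; a further right singular direction is $\Pi_2\propto\Lambda_p$, which is Hermitian-orthogonal to $\bs\lambda$ since $\bs\lambda^{\HH}(\bs\tau\times\ol{\bs\lambda})=\bs\tau\cdot(\ol{\bs\lambda}\times\ol{\bs\lambda})=0$, with part (b) ensuring $(\Lambda_p^{\HH}\Lambda_p)^{-1/2}$ is well defined. Setting $\ol\Pi_1:=M\Pi_2\Lambda_q^{-1/2}$ reproduces the displayed closed form (after commuting diagonal matrices), and the remaining $\Pi_1,\ol\Pi_2$ are fixed by orthogonal completion; multiplying the three factors then recovers $C$. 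The main obstacle is the mixed-radix Diophantine bookkeeping in part (a); parts (b) and (c) are direct position-wise linear-algebra verifications.
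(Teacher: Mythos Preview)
The paper does not supply its own proof of this theorem; it is quoted verbatim from the cited works \cite{chhh:2013,hllllt:2018}, so there is no in-paper argument to compare against. Your position-wise reduction via \cref{lm:C:simul-diag-u} is exactly the approach of those references, and your sketch is correct in substance. Two small points are worth making explicit. In part~(a), the Brillouin bound \cref{eq:k.a} constrains $\bk\cdot\ba_\ell$, not $\bk\cdot\what\ba_\ell$; your sequential argument is still valid because once $\bk\cdot\ba_1=0$ the relation $\what\ba_2=\ba_2+(\rho_1-m_1/n_1)\ba_1$ gives $\bk\cdot\what\ba_2=\bk\cdot\ba_2$, and similarly at the third step, but this substitution should be stated. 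In part~(b) you implicitly use that the $\tau_\ell$ (hence $z_\ell=\tau_\ell\delta_\ell$) are real, so that $|1+cz|^2=1$ is a real quadratic in $z$; this is the intended setting but is an unstated hypothesis in your write-up. In part~(c), ``orthogonal completion'' fixes $\Pi_1$ only up to a diagonal phase; the clean way to finish is to choose any unit $\Pi_1\perp\Pi_0,\Pi_2$ and then \emph{define} $\ol\Pi_2:=-M\Pi_1\Lambda_q^{-1/2}$, after which the asserted factorization holds by construction.
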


It is not difficult to observe that there is only one nonzero off-diagonal entry in each column or row of $C_1,C_2,C_3$.
Physically, the entry represents the relation between a mesh node with its surroundings in the mesh grid, or equivalently, the neighbor in the lattice.
The index of such an entry in $C_1,C_2,C_3$ is that of the neighbor of the node along $\ba_1,\ba_2,\ba_3$.
For ease, these $6$ neighbors of a node are called its \emph{lattice neighbors}.
Define
\[
	\neighbor_\ell(i_1,i_2,i_3) :=\set{\coo{i_1',i_2',i_3'}: \text{$C_\ell(i_\ell,i_\ell')\ne0$ or $C_\ell(i_\ell',i_\ell)\ne0$}},\qquad \ell=1,2,3,
\]
and $\neighbor(i_1,i_2,i_3):=\neighbor_1(i_1,i_2,i_3)\cup\neighbor_2(i_1,i_2,i_3)\cup\neighbor_3(i_1,i_2,i_3)$.
Clearly, $\neighbor(i_1,i_2,i_3)$ is the set of the node $\coo{i_1,i_2,i_3}$ and its 6 lattice neighbors.
Furthermore, 
\begin{equation}\label{eq:lattice-neighbor-property}
e_{\coo{i_1,i_2,i_3}}^{\HH}C_{\ell}I^{\domain\setminus\neighbor(i_1,i_2,i_3)}_\sigma=0,\qquad
e_{\coo{i_1,i_2,i_3}}^{\HH}C_{\ell}^{\HH}I^{\domain\setminus\neighbor(i_1,i_2,i_3)}_\sigma=0.
\end{equation}
Moreover, we can define the boundary and interior of an index set $\calI$:
\[
	\partial \calI:=\set{\coo{i_1,i_2,i_3}\in\calI: \neighbor(i_1,i_2,i_3)\setminus\calI\ne\emptyset},\quad
	\calI^{\circ}:=\set{\coo{i_1,i_2,i_3}\in\calI: \neighbor(i_1,i_2,i_3)\subset\calI}.
\]

\subsection{Equivalence of generalized/quadratic eigenvalue problems (GEP /QEP)}\label{ssec:equivalent-eigenvalue-problems}
It is easily seen that the GEP \cref{eq:maxwell-eq:discrete} can be rewritten as 
\begin{multline*}
	\ii
	\begin{bmatrix}
		I_{3n} & 0 \\ -\xi_d\mu_d^{-1} & -I_{3n}
	\end{bmatrix}
	\left( 
	\begin{bmatrix}
		0 & -\ii C \\ \ii C^{\HH} & \ii \xi_d\mu_d^{-1}C-\ii C^{\HH}\mu_d^{-1}\zeta_d
	\end{bmatrix}
	- \omega
	\begin{bmatrix}
		\mu_d & 0 \\ 0 & \varepsilon_d-\xi_d\mu_d^{-1}\zeta_d
	\end{bmatrix}
	\right)
	\\\cdot
	\begin{bmatrix}
		I_{3n} & \mu_d^{-1}\zeta_d \\ 0 & I_{3n}
	\end{bmatrix}
	\begin{bmatrix}
		\bh \\ \be
	\end{bmatrix}
	=0.
\end{multline*}
Together with the choices of $\mu_d,\varepsilon_d,\zeta_d,\xi_d$, we can consider this matrix pair instead
\begin{equation}\label{eq:equivGEP}
	\begin{multlined}
	\addtolength\arraycolsep{-4pt}
		\begin{bmatrix}
			0 & -\ii C \\ \ii C^{\HH} & -\gamma [(I_3\otimes I^{(i)}) C+ C^{\HH}(I_3\otimes I^{(i)})]
		\end{bmatrix}
		- \omega
		\begin{bmatrix}
			I_3\otimes I_n & 0 \\ 0 & I_3\otimes [\varepsilon_o I^{(o)}+(\varepsilon_i-\gamma^2)  I^{(i)}]
		\end{bmatrix}\\
		:=  A_\gamma-\omega B_\gamma
		,
	\addtolength\arraycolsep{4pt}
	\end{multlined}
\end{equation}
also written as a matrix pair $(A_\gamma,B_\gamma)$,
which is equivalent to \cref{eq:maxwell-eq:discrete} in the sense that
\[
	\begin{multlined}
\text{
	$(\omega, 
	\begin{bmatrix}
		\bh-\ii\gamma (I_3\otimes I^{(i)})\be\\\be\\
	\end{bmatrix}
	)$ is an eigenpair of \cref{eq:equivGEP} 
}
\\
	\Leftrightarrow
	\text{
	$(\omega,
	\begin{bmatrix}
		\be\\\bh\\ 
	\end{bmatrix}
	)$ is an eigenpair of \cref{eq:maxwell-eq:discrete}.
}
	\end{multlined}
\]
Moreover, if $(\omega, 
\begin{bmatrix}
	\be\\\bh\\ 
\end{bmatrix}
)$ is an eigenpair of \cref{eq:maxwell-eq:discrete} with $\omega\ne0$,
then $\bh=\ii(\gamma I_3\otimes I^{(i)}-\omega^{-1}C)\be$.
Note that $A_\gamma,B_\gamma$ are Hermitian; $(A_\gamma,B_\gamma)$ is regular if $\gamma\ne \gamma_{*} \equiv \sqrt{\varepsilon_i}$;
$B_\gamma\succ0$ if $\gamma<\gamma_{*}$; $B_\gamma$ is indefinite if $\gamma>\gamma_{*}$.
Thus, all eigenvalues of $(A_\gamma, B_\gamma)$ are real if $\gamma<\gamma_{*}$.

The matrix pair $(A_\gamma, B_\gamma)$ is also equivalent to a Hermitian quadratic matrix polynomial (a Hermitian QEP)
\begin{equation}\label{eq:equivQEP}
	Q_\gamma(\omega):=
		C^{\HH}C
		-\omega\gamma[(I_3\otimes I^{(i)}) C+ C^{\HH}(I_3\otimes I^{(i)})]
		-\omega^2I_3\otimes [\varepsilon_o I^{(o)}+(\varepsilon_i-\gamma^2)  I^{(i)}]
		,
\end{equation}
in the sense that for $\be\ne0$,
\[
\text{
	$(\omega, \be)$ is an eigenpair of \cref{eq:equivQEP}
	$\Leftrightarrow$
	$(\omega,
	\begin{bmatrix}
		%-\ii\omega^{-1} C\be
		\be\\\bh\\
	\end{bmatrix}
	)$ is an eigenpair of \cref{eq:maxwell-eq:discrete}.
}
\]
Clearly, the eigenvalues of $Q_\gamma(\cdot)$ are real or appear in conjugate pairs if nonreal.

Suppose that $\omega$ is an eigenvalue of $Q_\gamma(\cdot)$ and $\be$ is  its corresponding eigenvector.
Then, $Q_\gamma(\omega)\be=0$ gives
\begin{equation}\label{eq:rayleigh-quotient-eq}
	\be^{\HH}Q_\gamma(\omega)\be=
		%\be^{\HH}C^{\HH}C \be
		%-\omega\gamma \be^{\HH}[(I_3\otimes I^{(i)}) C+ C^{\HH}(I_3\otimes I^{(i)})]\be
		%-\omega^2 \be^{\HH}I_3\otimes [\varepsilon_o I^{(o)}+(\varepsilon_i-\gamma^2)  I^{(i)}]\be
	c(\be)-\omega \gamma b(\be) -\omega^2 [\varepsilon_oa_o(\be)+(\varepsilon_i-\gamma^2)a_i(\be)]
		=0,
\end{equation}
where
\begin{align*}
	c(\be)&:=\be^{\HH}C^{\HH}C \be\ge0,\\
	b(\be)&:=\be^{\HH}[(I_3\otimes I^{(i)}) C+ C^{\HH}(I_3\otimes I^{(i)})]\be=2\Re[\be^{\HH}(I_3\otimes I^{(i)}) C \be]\in \mathbb{R},\\
	%a(\be)&:= a_o(\be)+a_i(\be),\\
	a_o(\be)&:=\be^{\HH}[I_3\otimes I^{(o)}]\be\ge0,\\
	a_i(\be)&:=\be^{\HH}[I_3\otimes  I^{(i)}]\be\ge0.
\end{align*}
Furthermore,
\begin{equation}\label{eq:abc=0}
	c(\be)=0 \Leftrightarrow C \be=0,\quad
		a_o(\be)=0 \Leftrightarrow (I_3\otimes I^{(o)}) \be=0,\quad
		a_i(\be)=0 \Leftrightarrow (I_3\otimes I^{(i)}) \be=0.
\end{equation}
By \cref{eq:rayleigh-quotient-eq}, $\omega$ is one of the roots of the scalar function
\begin{subequations} \label{eq:rayleigh-quotient}
\begin{equation}
	\omega_{\pm}(\be)=\frac{\gamma b(\be)\pm \Delta(\be)^{1/2}}{-2[\varepsilon_oa_o(\be)+(\varepsilon_i-\gamma^2)a_i(\be)]}, 
\end{equation}
where
\begin{align}
	\Delta(\be)=\gamma^2b(\be)^2+4c(\be)[\varepsilon_oa_o(\be)+(\varepsilon_i-\gamma^2)a_i(\be)].
\end{align}
\end{subequations}
%Moreover, if $\Im\omega\ne0$, then $\overline{\omega}$ is the conjugate eigenvalue. 

For $\gamma < \gamma_{*} = \sqrt{\varepsilon_i}$, \cref{eq:maxwell-eq:discrete} and \cref{eq:SVD_C} show that the matrix pair $(A_{\gamma}, B_{\gamma})$ has $2n$ semisimple zero eigenvalues. Furthermore, if $C \be \neq 0$, i.e., $c(\be) >0$, then from \cref{eq:rayleigh-quotient} it follows that $(A_{\gamma}, B_{\gamma})$ has $2n$ positive and $2n$ negative eigenvalues, respectively. Moreover, for $\gamma > \gamma_{*}$, if $\Im\omega\ne0$, then $\bar{\omega}$ is the conjugate eigenvalue.

\subsection{Null-space free GEP}

Since the $6n \times 6n$ Hermitian matrix $A_{\gamma}$ in \cref{eq:equivGEP} has an extensive null space with nullity $2n$, from a computational viewpoint, this would affect and slow down the convergence of the desired smallest positive eigenvalues, and consequently a more compact form for the deflation of all zeros is necessary to be proposed. Fortunately, a $4n \times 4n$ null-space free GEP (NFGEP) has been derived in \cite{chhh:2013}.

\begin{theorem}[\cite{chhh:2013}] \label{thm:NullspaceFree}
     If $\gamma \neq \gamma_{*} \equiv \sqrt{\varepsilon_i}$, then the GEP in \cref{eq:maxwell-eq:discrete} can be reduced to a $4n \times 4n$ NFGEP
\begin{subequations} \label{eq:NFGEP}
\begin{equation}\label{eq3.16}
\widehat{A}_r\by_r=\omega\left(\ii\begin{bmatrix}0 &\Sigma_r^{-1}\\-\Sigma_r^{-1}&0\end{bmatrix}\right)\by_r\equiv \omega \widehat{B}_r\by_r,
\end{equation}
and
\[
      \begin{bmatrix}
           \bh \\ \be\\
      \end{bmatrix} = \ii \begin{bmatrix} -I_{3n}&-\zeta_{d}   \\
     \xi_{d} &\varepsilon_{d}
    \end{bmatrix}^{-1} {\diag} \left( P_{r}, Q_{r} \right) \by_r,
\]
where
\begin{equation}\label{eq3.17}
\widehat{A}_r:=\widehat{A}_r(\gamma)\equiv \diag(P_r^{\HH},Q_r^{\HH})\begin{bmatrix}\zeta_d &-I_{3n}\\ I_{3n}&0\end{bmatrix}\begin{bmatrix}\Phi^{-1} &0\\0&I_{3n}\end{bmatrix}\begin{bmatrix}\xi_d &I_{3n}\\ -I_{3n}&0\end{bmatrix}\diag(P_r,Q_r)
\end{equation}
\end{subequations}
with $\Phi:=\Phi(\gamma)\equiv I_3 \otimes \left[ \varepsilon_o I^{(o)} + (\varepsilon_i -\gamma^2) I^{(i)} \right]$ by \cref{eq:part_mu_eps_xi_zeta}.     
\end{theorem}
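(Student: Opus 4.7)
The plan is to deflate the $2n$-dimensional kernel of $\diag(C,C^{\HH})$ in \cref{eq:maxwell-eq:discrete} by projecting onto the $4n$-dimensional subspace adapted to the SVD $C=P_r\Sigma_rQ_r^{\HH}$ of \cref{lm:C:SVD}. First I would rewrite \cref{eq:maxwell-eq:discrete} by a row-and-column swap as
\[
\omega M\begin{bmatrix}\bh\\\be\end{bmatrix}=i\begin{bmatrix}0 & C\\ C^{\HH} & 0\end{bmatrix}\begin{bmatrix}\bh\\\be\end{bmatrix},\qquad M:=\begin{bmatrix}-I_{3n} & -\zeta_d\\ \xi_d & \varepsilon_d\end{bmatrix}.
\]
A single block LU factorization of $M$ together with the identity $\xi_d\zeta_d=\gamma^2(I_3\otimes I^{(i)})$ exhibits $\Phi$ as its Schur complement, so $M$ is invertible iff $\gamma\neq\gamma_{*}=\sqrt{\varepsilon_i}$, with the closed form
\[
M^{-1}=\begin{bmatrix}-I_{3n}-\zeta_d\Phi^{-1}\xi_d & -\zeta_d\Phi^{-1}\\ \Phi^{-1}\xi_d & \Phi^{-1}\end{bmatrix}.
\]

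Next, since the right-hand side of the rewritten equation always lies in $\range(P_r)\oplus\range(Q_r)$, so does $M\begin{bmatrix}\bh\\\be\end{bmatrix}$ for any eigenpair with $\omega\neq 0$; this forces the existence of a unique $\by_r=\begin{bmatrix}\by_{r,1}\\\by_{r,2}\end{bmatrix}\in\mathbb{C}^{4n}$ with $\begin{bmatrix}\bh\\\be\end{bmatrix}=iM^{-1}\diag(P_r,Q_r)\by_r$, which is precisely the recovery formula asserted in the statement. Substituting this ansatz back into the two block rows of the rewritten equation, projecting by $P_r^{\HH}$ and $Q_r^{\HH}$ respectively, and multiplying through by $\pm\Sigma_r^{-1}$ yields
\[
\begin{aligned}
P_r^{\HH}(I_{3n}+\zeta_d\Phi^{-1}\xi_d)P_r\by_{r,1}+P_r^{\HH}\zeta_d\Phi^{-1}Q_r\by_{r,2}&=i\omega\Sigma_r^{-1}\by_{r,2},\\
Q_r^{\HH}\Phi^{-1}\xi_dP_r\by_{r,1}+Q_r^{\HH}\Phi^{-1}Q_r\by_{r,2}&=-i\omega\Sigma_r^{-1}\by_{r,1},
\end{aligned}
\]
which is exactly the $4n\times 4n$ NFGEP \cref{eq3.16}. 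To match the stated form of $\widehat A_r$ in \cref{eq3.17} it then remains to check, by direct block multiplication, the identity
\[
\begin{bmatrix}\zeta_d & -I_{3n}\\ I_{3n} & 0\end{bmatrix}\begin{bmatrix}\Phi^{-1} & 0\\ 0 & I_{3n}\end{bmatrix}\begin{bmatrix}\xi_d & I_{3n}\\ -I_{3n} & 0\end{bmatrix}=\begin{bmatrix}I_{3n}+\zeta_d\Phi^{-1}\xi_d & \zeta_d\Phi^{-1}\\ \Phi^{-1}\xi_d & \Phi^{-1}\end{bmatrix}.
\]

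The main obstacle is purely bookkeeping: the $i\omega$ factor of the original equation, the $i$ from the ansatz, and the sign mismatch between the top row of $M^{-1}$ and the top row of the triple product defining $\widehat A_r$ must combine in exactly the right way for the right-hand side to emerge in the skew form $i\begin{bmatrix}0&\Sigma_r^{-1}\\-\Sigma_r^{-1}&0\end{bmatrix}$ rather than in some symmetric or scalar form. Conceptually one should also verify that the reduction loses no information: the map $M^{-1}\diag(P_r,Q_r):\mathbb{C}^{4n}\to\mathbb{C}^{6n}$ is injective, and its image is a $4n$-dimensional subspace whose complement supports precisely the $\omega=0$ block of the original $6n\times 6n$ pair, so every nonzero eigenpair of \cref{eq:maxwell-eq:discrete} is captured by the NFGEP and reconstructed via the stated recovery formula.
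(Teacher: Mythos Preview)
The paper does not supply its own proof of this theorem; it is quoted verbatim from \cite{chhh:2013} and stated without argument. Your derivation is a correct and self-contained way to establish it: the row--column swap of \cref{eq:maxwell-eq:discrete} into $\omega M\begin{bmatrix}\bh\\\be\end{bmatrix}=\ii\begin{bmatrix}0&C\\C^{\HH}&0\end{bmatrix}\begin{bmatrix}\bh\\\be\end{bmatrix}$ is accurate, the block LU gives $\varepsilon_d-\xi_d\zeta_d=\Phi$ and hence your formula for $M^{-1}$, and the substitution/projection steps produce precisely the two displayed scalar block equations, which assemble into $\widehat A_r\by_r=\omega\widehat B_r\by_r$ once you note (as you do) that the triple product in \cref{eq3.17} equals $\diag(-I_{3n},I_{3n})M^{-1}$, i.e.\ $M^{-1}$ with its top block row negated. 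The sign bookkeeping you flag as the main obstacle does work out exactly, and your remark that $M^{-1}\diag(P_r,Q_r)$ is injective with complementary image carrying only the $\omega=0$ eigenspace correctly justifies that no nonzero eigenpair is lost in the reduction.
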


\begin{theorem} \label{thm:derivative_omega}
     For $\gamma \lesssim  \gamma_{*} \equiv \sqrt{\varepsilon_i}$, it holds generally that
     \[
     \begin{cases}
           \frac{d \omega(\gamma)}{d \gamma} \geq 0, & \mbox{if } \omega(\gamma) > 0, \\
           \frac{d \omega(\gamma)}{d \gamma} \leq 0, & \mbox{if } \omega(\gamma) < 0,
     \end{cases}
     \]
     i.e., all positive and negative eigenvalues of $(\widehat{A}_{r}, \widehat{B}_r)$ either move toward the right and the left, respectively, or stop motionless as $\gamma$ becomes close to $\gamma_{\ast}$.
%Editor: Please ensure that the intended meaning has been maintained in the edits in the previous sentence.     
\end{theorem}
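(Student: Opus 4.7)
Since $\gamma<\gamma_{*}$ we have $B_\gamma\succ 0$ by Section~2.2, so $(A_\gamma,B_\gamma)$ in~\eqref{eq:equivGEP} is a definite Hermitian pencil whose simple eigenvalues depend smoothly on $\gamma$. The plan is to apply first-order perturbation theory, reduce the sign question to a geometric condition on the eigenmode, and then close it using the chiral Pasteur structure. Differentiating $A_\gamma\bz(\gamma)=\omega(\gamma)B_\gamma\bz(\gamma)$ in $\gamma$ and pairing against $\bz^{\HH}$, the Hermiticity of $A_\gamma-\omega B_\gamma$ cancels the $\bz'(\gamma)$ contribution and delivers the Hellmann--Feynman identity
\[
\frac{d\omega}{d\gamma}=\frac{\bz^{\HH}(\partial_\gamma A_\gamma-\omega\,\partial_\gamma B_\gamma)\bz}{\bz^{\HH}B_\gamma\bz},
\]
whose denominator is strictly positive. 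By~\eqref{eq:equivGEP} only the $(2,2)$ block of $\partial_\gamma A_\gamma-\omega\,\partial_\gamma B_\gamma$ is nonzero, and letting $\be$ denote the lower block of $\bz$ (which by Section~2.2 coincides with the electric-field component of the corresponding eigenvector of~\eqref{eq:maxwell-eq:discrete}), a direct calculation in the notation of~\eqref{eq:rayleigh-quotient-eq} gives
\[
\bz^{\HH}(\partial_\gamma A_\gamma-\omega\,\partial_\gamma B_\gamma)\bz = 2\omega\gamma\,a_{i}(\be)-b(\be).
\]

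Next I would substitute the formula $\bh=\ii(\gamma\,I_{3}\otimes I^{(i)}-\omega^{-1}C)\be$ from Section~2.2 into $b(\be)=2\Re(\be^{\HH}(I_{3}\otimes I^{(i)})C\be)$; a short manipulation yields
\[
2\omega\gamma\,a_{i}(\be)-b(\be)=2\omega\,\Im\bigl(\be^{\HH}(I_{3}\otimes I^{(i)})\bh\bigr),
\]
which, via the QE in~\eqref{eq:rayleigh-quotient-eq}, also equals $(\gamma\omega)^{-1}(\omega^{2}M(\be)-c(\be))$ with $M(\be):=\varepsilon_{o}a_{o}(\be)+(\varepsilon_{i}+\gamma^{2})a_{i}(\be)$. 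Hence $\sign(d\omega/d\gamma)=\sign(\omega)$ is equivalent to
\[
\Im\bigl(\be^{\HH}(I_{3}\otimes I^{(i)})\bh\bigr)\ge 0,\quad\text{or equivalently}\quad \omega^{2}M(\be)\ge c(\be).
\]

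The hard part will be this last inequality. The naive Cauchy--Schwarz bound $b(\be)^{2}\le 4\,a_{i}(\be)\,c(\be)$ is too loose to imply it, so one must exploit both the sign of $\gamma$ in the Pasteur relation~\eqref{eq:Pasteur} and the explicit Yee-discretization structure from \cref{lm:C:simul-diag-u,lm:C:SVD}: expanding $(\be,\bh)$ in the orthonormal basis that simultaneously diagonalizes $C_{1},C_{2},C_{3}$, decomposing into right- and left-handed circular components, and checking that $\gamma>0$ forces the discrete helicity integrand $\Im(\be^{\HH}(I_{3}\otimes I^{(i)})\bh)$ to be non-negative on each handed subspace. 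The ``generally'' qualifier in the statement then absorbs residual degeneracies: eigenvectors with $a_{i}(\be)=0$ are vacuum-supported modes giving $d\omega/d\gamma=0$ (the ``motionless'' case), and multiple eigenvalues are handled by applying the derivative formula to each invariant subspace.
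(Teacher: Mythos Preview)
Your Hellmann--Feynman reduction is correct, and the numerator you obtain, $2\omega\gamma\,a_i(\be)-b(\be)$, is the same object the paper isolates. The paper works instead on the null-space-free pencil $(\widehat A_r,\widehat B_r)$, normalizes against the positive-definite $\widehat A_r$ rather than $B_\gamma$, and differentiates $1/\omega$; after unwinding \cref{eq3.17} its quantity $d(\gamma)$ is exactly $\omega^{-1}$ times your numerator. So through your second display the two routes are equivalent.

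The gap is in your ``hard part.'' You set out to prove the \emph{absolute} inequality $\omega^{2}M(\be)\ge c(\be)$ (equivalently $\Im(\be^{\HH}(I_3\otimes I^{(i)})\bh)\ge 0$) for every eigenmode, but the theorem does not assert this and the paper's own computation shows the governing Hermitian form is genuinely indefinite: the $2\times 2$ block $W$ coming from $\widehat A_r'(\gamma)$ has one positive eigenvalue of order $(\varepsilon_i-\gamma^2)^{-2}$ and one \emph{negative} eigenvalue that stays bounded as $\gamma\nearrow\gamma_*$. The paper's conclusion is therefore only that $d(\gamma)\ge 0$ \emph{generically} in that limit---precisely the hedge ``generally'' in the statement. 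Your helicity-decomposition plan cannot produce an absolute sign: the basis of \cref{lm:C:simul-diag-u} diagonalizes $C_1,C_2,C_3$ but not the spatial projector $I_3\otimes I^{(i)}$, so the handed subspaces do not decouple the integrand; and already in the test case $a_o(\be)=0$ your inequality reduces (via \cref{eq:rayleigh-quotient}) to $4\gamma^{2}a_i(\be)\,c(\be)\ge(\varepsilon_i+\gamma^{2})\,b(\be)^{2}$, which Cauchy--Schwarz cannot supply when $\gamma<\gamma_*$. The correct closure is to drop the absolute claim and argue, as the paper does, that the positive part of the derivative form blows up and dominates as $\gamma\to\gamma_*^{-}$.
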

\begin{proof}
     For $\gamma < \gamma_*$, $\widehat{A}_r$ in \cref{eq3.17} is positive definite.  There is an eigenvector $\by_r(\gamma)$ with $\by_r^{\HH}(\gamma) \widehat{A}_r(\gamma) \by_r(\gamma) = 1$ such that
     \begin{subequations}
     \begin{equation}
          \frac{1}{\omega(\gamma)} = \by_r^{\HH}(\gamma) \widehat{B}_r \by_r(\gamma). \label{eq:RaRe_Br}
     \end{equation}
     Because of $\by_r^{\HH}(\gamma) \widehat{A}_r(\gamma) \by_r(\gamma) = 1$, we have
     \begin{equation}
		 2 \Re [(\by_r^{\HH}(\gamma))^{\prime} \widehat{A}_r(\gamma) \by_r(\gamma)] + \by_r^{\HH}(\gamma) \widehat{A}_{r}^{\prime}(\gamma) \by_r(\gamma) = 0. \label{eq:der_RaRe_Ar}
     \end{equation}
     \end{subequations}
     Taking the derivative of $\gamma$ in \cref{eq:RaRe_Br}, we have
     \begin{alignat*}{2}
		 -\frac{\omega^{\prime}(\gamma)}{\omega(\gamma)^2} &= 2 \Re[(\by_r^{\HH}(\gamma))^{\prime} \widehat{B}_r \by_r(\gamma)] & \\*
		 &= \frac{2}{\omega(\gamma)}\Re [(\by_r^{\HH}(\gamma))^{\prime} \widehat{A}_r(\gamma) \by_r(\gamma)] &  \quad & (\mbox{by }  \cref{eq3.16}) \\*
           &= -\frac{1}{\omega(\gamma)} \by_r^{\HH}(\gamma) \widehat{A}_{r}^{\prime}(\gamma) \by_r(\gamma) &\quad &  (\mbox{by \cref{eq:der_RaRe_Ar}}) \\*
           &= -\frac{1}{\omega(\gamma)}\bz_r^{\HH}(\gamma) \left( I_3 \otimes \begin{bmatrix}
                 \frac{2\gamma \varepsilon_i}{(\varepsilon_i - \gamma^2)^2}  I^{(i)} & -\frac{\ii (\varepsilon_i + \gamma^2)}{(\varepsilon_i - \gamma^2)^2} I^{(i)} \\ 
                 \frac{\ii (\varepsilon_i + \gamma^2)}{(\varepsilon_i - \gamma^2)^2}  I^{(i)} & \frac{2\gamma}{(\varepsilon_i - \gamma^2)^2}  I^{(i)}
           \end{bmatrix} \right) \bz_r(\gamma) &\quad &(\mbox{by \cref{eq3.17}}) \\*
           &\equiv -\frac{1}{\omega(\gamma)} d(\gamma), &
     \end{alignat*}
     where $\bz_r (\gamma) := \diag(P_r, Q_r) \by_r(\gamma)$. Since the matrix 
     $$W = \frac{1}{(\varepsilon_i - \gamma^2)^{2}} \begin{bmatrix} 
           2 \gamma \varepsilon_i & -\ii (\varepsilon_i + \gamma^2) \\ \ii (\varepsilon_i + \gamma^2) & 2\gamma
     \end{bmatrix}$$ 
     is orthogonally congruent to 
     $\begin{bmatrix} 
           4 \gamma (\varepsilon_i+1) (\varepsilon_i - \gamma^2)^{-2} & 0 \\
           0 & - [4 \gamma (\varepsilon_i + 1)]^{-1}
     \end{bmatrix}$, it holds generically that $d(\gamma) \geq 0$ as $\gamma \nearrow \gamma_{\ast}$. This implies
     that $\omega^{\prime}(\gamma)$ has the same sign as $\omega(\gamma)$, for $d(\gamma) > 0$, and $\omega^{\prime}(\gamma) = 0$ for $d(\gamma) =0$. 
\end{proof}

\section{Eigenstructure of the discrete single-curl operator}\label{sec:eigen-structure-of-discrete-single-curl-operator}
\subsection{Regularity}\label{ssec:regularity}
Clearly, if $\gamma\ne \gamma_{*} = \sqrt{\varepsilon_i}$, since $B_\gamma$ is nonsingular, the matrix $(A_\gamma, B_\gamma)$ is regular.
In the following, we will provide a condition to make $(A_\gamma,B_\gamma)$ regular at $\gamma=\gamma_{*}$. 
For ease, in this subsection, we write $A=A_{\gamma_{*}}$, $B=B_{\gamma_{*}}$.

First, we locate the nullspace. It is easy to see that 
$\nullspace(B)=\begin{bmatrix}
	0 \\ I_3\otimes  I^{(i)} \\
\end{bmatrix}$.
By \cref{lm:C:SVD}, from the SVD of $C$, we know that $\nullspace(C)=\range((I_3\otimes T)\Pi_0)$ and $\nullspace(C^{\HH})=\range((I_3\otimes T)\ol{\Pi_0})
$.
Thus, $ \nullspace(A_\gamma)=\range(L_\gamma) $, where
\[
	L_\gamma=
	\begin{bmatrix}
		-\ii\gamma(I_3\otimes I^{(i)})(I_3\otimes T)\Pi_0&(I_3\otimes T)\ol{\Pi_0}\\
		(I_3\otimes T)\Pi_0&0\\
	\end{bmatrix}
	.
\]
Any column of $L_\gamma$ is an eigenvector corresponding to the eigenvalue $0$ of either the matrix $A_\gamma$ or the matrix pair $(A_\gamma,B_\gamma)$.
In particular, for $(A_\gamma,B_\gamma)$, we call these eigenvalues \emph{trivial zero eigenvalues}.

Then, we try to find an equivalent condition such that $(A,B)$ is regular. Hereafter, we use the notations $I_{\sigma}^{(i)}$ and $I_{\sigma}^{(o)}$ to denote the matrices consisting of the nonzero columns of $I^{(i)}$ and $I^{(o)}$, respectively.

\begin{theorem}\label{lm:regularity-equiv-cond}
	For $\bz_\ell\in\nullspace(\Lambda_\ell^{\HH}),\ell=1,2,3$ satisfying
	\begin{equation}\label{eq:proper:z}
			I^{(o)}T\bz_1=I^{(o)}T\bz_2=I^{(o)}T\bz_3,
	\end{equation}
	$\calS(\bz_1, \bz_2, \bz_3)$ denotes the set of all nonzero $\bx_1$ that satisfies
	\begin{subequations}\label{eq:constraint:x1}
		\begin{gather}
			\label{eq:constraint:x1:1}
			(I_n+\delta_1\Lambda_1) \bx_1 - \bz_1
			=(I_n+\delta_2\Lambda_2) \bx_1 - \bz_2
			=(I_n+\delta_3\Lambda_3) \bx_1 - \bz_3
			, \\
			\label{eq:constraint:x1:2}
			%REVIEWER%I^{(o)}T\Lambda_1 \bx_1=0,\quad
			%REVIEWER%I^{(o)}T( \bz_2 - \bz_1)=0,\quad
			%REVIEWER%I^{(o)}T(\bz_3 - \bz_1)=0.
		I^{(o)}T\Lambda_1 \bx_1=0.
		\end{gather}
	\end{subequations}
	Then, $(A,B)$ is regular if and only if $\calS(\bz_1, \bz_2, \bz_3)=\set{0}$ for any proper $\bz_\ell$.
\end{theorem}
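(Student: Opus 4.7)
My plan is to characterize the singularity of the Hermitian pencil $(A,B)$ at $\gamma=\gamma_{*}$ via the existence of a nontrivial polynomial null vector, and to read off the conditions defining $\calS(\bz_1,\bz_2,\bz_3)$ from the resulting block equations.

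I would begin by noting that $(A,B)$ is singular if and only if there exists a nonzero polynomial $v(\omega)=\sum_{k=0}^{p}v_k\omega^k$ with $(A-\omega B)v(\omega)=0$; equivalently, a \emph{singular chain} $Av_0=0$, $Av_{k+1}=Bv_k$ for $0\le k<p$, and $Bv_p=0$. Using the block structure of $A$, $B$ at $\gamma_*$ together with \cref{lm:C:SVD}, I would identify $\nullspace(B)$ as the space of vectors $(0,w)^{\T}$ with $(I_3\otimes I^{(o)})w=0$, and $\nullspace(A)=\range(L_{\gamma_*})$. The trivial chain ($p=0$, so $\nullspace(A)\cap\nullspace(B)\ne\set{0}$) yields only the degenerate subcase $\bz_\ell=0$, in which $\bx_1$ encodes a $w\in\nullspace(C)\cap\nullspace(C^{\HH})$ supported on $\domain_i$; the substantive work is for the length-two chain.

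For $p=1$, I would write $v_1=(0,w_1)^{\T}$ with $w_1$ supported on $\domain_i$, and $v_0=L_{\gamma_*}c$, so that the lower block $v_0^{(2)}\in\nullspace(C)$ and the upper block decomposes as $v_0^{(1)}=-\ii\gamma_*(I_3\otimes I^{(i)})v_0^{(2)}+z$ with $z\in\nullspace(C^{\HH})$. The chain equation $Av_1=Bv_0$ then splits into
\[
-\ii Cw_1=v_0^{(1)},\qquad -\gamma_{*}[(I_3\otimes I^{(i)})Cw_1+C^{\HH}w_1]=\varepsilon_o(I_3\otimes I^{(o)})v_0^{(2)}.
\]
Pulling back through $I_3\otimes T^{\HH}$ and using $C_\ell=T\Lambda_\ell T^{\HH}$ together with the fact that $I_n+\delta_\ell C_\ell$ is the unitary cyclic shift in direction $\ell$, I would encode $w_1$ via a single vector $\bx_1\in\mathbb{C}^n$ and read off $\bz_\ell$ from the $\ell$-th $T$-basis component of $z$. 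The three component equations of $-\ii Cw_1=v_0^{(1)}$ then become precisely the statement that $(I_n+\delta_\ell\Lambda_\ell)\bx_1-\bz_\ell$ is the same for $\ell=1,2,3$; the constraint $z\in\nullspace(C^{\HH})$ decouples in the diagonal basis into $\bz_\ell\in\nullspace(\Lambda_\ell^{\HH})$, while the single-vector nature of $z$ on $\domain_o$ forces the proper condition $I^{(o)}T\bz_1=I^{(o)}T\bz_2=I^{(o)}T\bz_3$; the second block equation, combined with $w_1$ being supported on $\domain_i$ and the unitary identity $C_\ell+C_\ell^{\HH}=-\delta_\ell C_\ell^{\HH}C_\ell$, collapses to the single support condition $I^{(o)}T\Lambda_1\bx_1=0$. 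The converse reverses this construction: given proper $\bz_\ell$ and nonzero $\bx_1\in\calS$, one builds $z$, $w_1$, $v_0$, $v_1$ explicitly and verifies the chain.

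The main obstacle will be the careful reduction in the transformed $T$-basis, where $I^{(i)}$ and $I^{(o)}$ are not diagonal: combining the three vector-valued equations so that exactly one support condition $I^{(o)}T\Lambda_1\bx_1=0$ survives (rather than three), and simultaneously showing that every longer singular chain $p\ge 2$ either trivializes or reduces to the length-two form, so that $\calS$ captures all obstructions to the regularity of $(A,B)$.
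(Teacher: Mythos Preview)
Your plan has a genuine gap: you misidentify where the data $\bz_\ell$ come from, and as a result you are doing the hard work on the wrong chain length.

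The paper does not use singular chains at all. Because $B=B_{\gamma_*}\succeq 0$ is Hermitian, it invokes \cite[Theorem~4.1]{dzli:1991}: for such a pair, $(A,B)$ is regular if and only if $\nullspace(A)\cap\nullspace(B)=\set{0}$. (You can see this directly: if $Av_0=0$, $Av_{k+1}=Bv_k$, $Bv_p=0$ with $p\ge 1$, then $v_0^{\HH}Bv_0=v_0^{\HH}Av_1=\overline{v_1^{\HH}Av_0}=0$, so $Bv_0=0$ and already $v_0\in\nullspace(A)\cap\nullspace(B)$.) So the entire characterization lives in the $p=0$ case, and your program of reducing $p\ge 2$ to $p=1$ and extracting $\calS$ from the length-two chain is aimed at the wrong target.

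Concretely, your assertion that the $p=0$ case ``yields only the degenerate subcase $\bz_\ell=0$'' is wrong. A vector in $\nullspace(A)\cap\nullspace(B)$ is exactly $(0,w)^{\T}$ with $w$ supported on $\domain_i$ and $w\in\nullspace(C)\cap\nullspace(C^{\HH})$, so $w=(I_3\otimes T)\Pi_0\Lambda_q^{1/2}\bx_1=(I_3\otimes T)\ol{\Pi_0}\Lambda_q^{1/2}\bx_2$; componentwise this reads $\Lambda_\ell\bx_1=\Lambda_\ell^{\HH}\bx_2$. The identity $\Lambda_\ell=-\Lambda_\ell^{\HH}(I_n+\delta_\ell\Lambda_\ell)$ then gives $\Lambda_\ell^{\HH}\bigl[\bx_2+(I_n+\delta_\ell\Lambda_\ell)\bx_1\bigr]=0$, i.e.\ $-\bx_2=(I_n+\delta_\ell\Lambda_\ell)\bx_1-\bz_\ell$ with $\bz_\ell\in\nullspace(\Lambda_\ell^{\HH})$. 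The $\bz_\ell$ are the kernel freedom in this step, not an artifact of a longer chain; they are nonzero precisely when some $\Lambda_\ell$ is singular. The support condition on $w$ then gives $I^{(o)}T\Lambda_\ell\bx_1=0$ for each $\ell$, and the proper condition \cref{eq:proper:z} is what lets you collapse those three to the single equation $I^{(o)}T\Lambda_1\bx_1=0$. That is the whole proof.
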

\begin{proof}
	Suppose that $\bx\in\nullspace(A)\cap\nullspace(B)$.
	Since $\nullspace(B)=\begin{bmatrix}
		0 \\ I_3\otimes  I^{(i)}
	\end{bmatrix}$,
	$\bx$ must be of the form
	$
	\begin{bmatrix}
		0 \\ (I_3\otimes  I^{(i)}_\sigma)\be_0
	\end{bmatrix}
	$ with some suitable vector $\be_0$.
	Additionally, with some vectors $\be_1$ and $\be_2$, $\bx$ must be of the form
	\begin{equation}
		L_{\gamma_{*}}
		\begin{bmatrix}
			\be_1 \\ \ii\gamma_{*}\be_2\\
		\end{bmatrix}
		=
		\begin{bmatrix}
			-\ii\gamma_{*}(I_3\otimes I^{(i)})(I_3\otimes T)\Pi_0 \be_1+\ii\gamma_{*}(I_3\otimes T)\ol{\Pi_0}\be_2\\
			(I_3\otimes T)\Pi_0 \be_1\\
		\end{bmatrix}
		=
		\begin{bmatrix}
			0 \\ (I_3\otimes  I^{(i)}_\sigma)\be_0
		\end{bmatrix}. \label{eq:pf_regular_1}
	\end{equation}
	From the second equation in \cref{eq:pf_regular_1}, the first equation implies that
	\[
	     (I_3 \otimes I^{(i)}) (I_3 \otimes T) \Pi_0 \be_1 = (I_3 \otimes I^{(i)}) (I_3 \otimes I_{\sigma}^{(i)}) \be_0 %\nonumber \\*
	     = (I_3 \otimes I_{\sigma}^{(i)}) \be_0 = (I_3 \otimes T) \ol{\Pi_0} \be_2, %\label{eq:pf_regular_2}
	\]
	which yields that
	\begin{equation}
		(I_3\otimes  [T^{\HH}I^{(i)}_\sigma])\be_0= \Pi_0 \be_1 = \ol{\Pi_0}\be_2, \label{eq:pf_regular_3}
	\end{equation}
	with $\be_0\ne0$, and $\be_1$, $\be_2$ being not simultaneously zero. Let $\bx_1=\Lambda_q^{-1/2}\be_1$, $\bx_2=\Lambda_q^{-1/2}\be_2$ and $\be_0=
	\begin{bmatrix}
		\by_1^{\T} & \by_2^{\T} & \by_3^{\T}
	\end{bmatrix}^{\T}$. The equations in \cref{eq:pf_regular_3}
	are equivalent to 
	\begin{equation}\label{eq:constraint:x1x2}
		T^{\HH} I^{(i)}_\sigma \by_1 = 	\Lambda_1 \bx_1 = \Lambda_1^{\HH} \bx_2,\ \ 
		T^{\HH} I^{(i)}_\sigma \by_2 = 	\Lambda_2 \bx_1 = \Lambda_2^{\HH} \bx_2,\ \
		T^{\HH} I^{(i)}_\sigma \by_3 = 	\Lambda_3 \bx_1 = \Lambda_3^{\HH} \bx_2.
	\end{equation} 
	Thus,
	\[
		\by_1=(I^{(i)}_\sigma)^{\HH}T\Lambda_1 \bx_1,\quad
		\by_2=(I^{(i)}_\sigma)^{\HH}T\Lambda_2 \bx_1,\quad
		\by_3=(I^{(i)}_\sigma)^{\HH}T\Lambda_3 \bx_1.
	\]
	Noticing that $\delta_i^{-1}(\ee^{\ii\theta}-1)=-\delta_i^{-1}(\ee^{-\ii\theta}-1)\ee^{\ii\theta}$ for any $\theta\in \mathbb{R}$,
	we have $\Lambda_{\ell}=-\Lambda_\ell^{\HH}(I_n+\delta_{\ell}\Lambda_{\ell})$.
	By \cref{eq:constraint:x1x2}, we have
	\begin{equation} \label{eq:pf_regular_4}
		-\bx_2 = (I_n+\delta_1\Lambda_1) \bx_1 -\bz_1
		=(I_n+\delta_2\Lambda_2) \bx_1 - \bz_2
		=(I_n+\delta_3\Lambda_3) \bx_1 - \bz_3,
	\end{equation}
	namely, \cref{eq:constraint:x1:1},
	where $\bz_\ell\in\nullspace(\Lambda_\ell^{\HH})$.
	%$ y_2=\delta_2^{-1}\delta_1y_1, y_3=\delta_3^{-1}\delta_1y_1$. 
	%Considering $y_\ell$ in \cref{eq:constraint:x1x2}, we have
	Left-multiplying $I^{(o)}T$ on the sides of \cref{eq:constraint:x1x2} and noticing that $I^{(o)}T T^{\HH}I^{(i)}_\sigma=0$, we have
	\[
		I^{(o)}T\Lambda_1 \bx_1=0,\quad
		I^{(o)}T\Lambda_2 \bx_1=0,\quad
		I^{(o)}T\Lambda_3 \bx_1=0,
	\]
	%REVIEWER%which is equivalent to \cref{eq:constraint:x1:2} by \cref{eq:pf_regular_4}.
	which is equivalent to \cref{eq:constraint:x1:2} by the proper condition \cref{eq:proper:z}.
	Therefore,
	\[
		\nullspace(A)\cap\nullspace(B)
		= \set*{
			\begin{bmatrix}
				0 \\
				(I_3 \otimes T) \Pi_0\Lambda_q^{1/2} \bx_1
			\end{bmatrix}
			: 
			\bx_1\in\calS
		}.
	\]
	Noticing that 
	\[
		\begin{bmatrix}
			0 \\
			\Pi_0\Lambda_q^{1/2} \bx_1
		\end{bmatrix}
		\ne0 \
		\Leftrightarrow \
		\bx_1^{\HH}\Lambda_q^{1/2}\Pi_0^{\HH}\Pi_0\Lambda_q^{1/2} \bx_1= \bx_1^{\HH}\Lambda_q \bx_1\ne 0  \
		\Leftrightarrow \ \bx_1\ne 0.
	\] 
	We have shown that $\calS(\bz_1, \bz_2, \bz_3)=\set{0}$ if and only if $\nullspace(A)\cap\nullspace(B) = \set{0}$. From Theorem 4.1 in \cite{dzli:1991}, it follows that the regularity of $(A, B)$ is equivalent to $\calS(\bz_1, \bz_2, \bz_3)=\set{0}$. 
\end{proof}

At this point, we have an equivalence condition that $(A,B)$ is regular. In practice, because the linear system in \cref{eq:constraint:x1} is overdetermined, the condition $\calS(\bz_1, \bz_2, \bz_3)=\set{0}$ is generically held, and therefore, $A - \omega B$ is always regular. A very lengthy and complex proof for showing the regularity of $A - \omega B$ can be found in the Appendix.
From Theorem 4.1 of \cite{dzli:1991}, it is possible that $(A, B)$ has a defective infinite eigenvalue with a Jordan block of at most $2$. Below we will give a very loose condition that $(A, B)$ has this type of eigenvalue.

\begin{theorem}\label{lm:Jordan-block-equiv-cond}
	Suppose that $(A,B)$ is regular.
	The matrix pair $(A,B)$ has a defective infinite eigenvalue associated with a Jordan block of size two if and only if
	there exist $\bx_1, \bx_2, \bx_3$, not all zero vectors, such that
	\begin{subequations}\label{eq:lm:Jordan-block-equiv-cond}
		\begin{align}
			(I^{(i)}_\sigma)^{\HH}M_2I^{(i)}_\sigma \bx_3
			-(I^{(i)}_\sigma)^{\HH}M_3I^{(i)}_\sigma \bx_2
			&=0,\\
			(I^{(i)}_\sigma)^{\HH}M_3I^{(i)}_\sigma \bx_1
			-(I^{(i)}_\sigma)^{\HH}M_1I^{(i)}_\sigma \bx_3
			&=0,\\
			(I^{(i)}_\sigma)^{\HH}M_1I^{(i)}_\sigma \bx_2
			-(I^{(i)}_\sigma)^{\HH}M_2I^{(i)}_\sigma \bx_1
			&=0,
		\end{align}
	\end{subequations}
	where $M_\ell=C_\ell-C_\ell^{\HH}$.
\end{theorem}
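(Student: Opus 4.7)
The plan is to recast the existence of a $2\times 2$ infinite Jordan block as the singularity of a single compressed Hermitian matrix. For a regular pencil $(A,B)$, such a Jordan block exists if and only if there is an infinite Jordan chain $(\by_1,\by_2)$ of length two, i.e.\ vectors with $\by_1\ne 0$, $B\by_1=0$, and $B\by_2=A\by_1$; equivalently, a nonzero $\by_1\in\nullspace(B)$ with $A\by_1\in\range(B)$. Since $B$ is Hermitian, $\range(B)=\nullspace(B)^{\perp}$, and the condition becomes singularity of $N^{\HH}AN$, where the columns of $N$ form any basis of $\nullspace(B)$. Regularity of $(A,B)$ ensures $A\by_1\ne 0$ for every nonzero $\by_1\in\nullspace(B)$, so any chain produced this way is genuinely defective (not part of a semisimple block).

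Next I would identify $\nullspace(B)$ explicitly at $\gamma=\gamma_{*}$. Because $\varepsilon_i-\gamma_{*}^2=0$, the bottom diagonal block of $B$ in \cref{eq:equivGEP} reduces to $\varepsilon_o(I_3\otimes I^{(o)})$, which yields $N=\begin{bmatrix} 0 \\ I_3\otimes I^{(i)}_\sigma \end{bmatrix}$. A direct block multiplication with $A$ gives
\[
N^{\HH} A\, N = -\gamma_{*}\bigl(I_3\otimes I^{(i)}_\sigma\bigr)^{\HH}\Bigl[(I_3\otimes I^{(i)})\,C+C^{\HH}(I_3\otimes I^{(i)})\Bigr]\bigl(I_3\otimes I^{(i)}_\sigma\bigr).
\]
Substituting the form \cref{eq:C} of $C$ and using $(I^{(i)}_\sigma)^{\HH}I^{(i)}=(I^{(i)}_\sigma)^{\HH}$, each off-diagonal block collapses to $\pm(I^{(i)}_\sigma)^{\HH}M_\ell I^{(i)}_\sigma$ for $\ell\in\{1,2,3\}$, assembled according to the same skew pattern as $C$. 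Partitioning the unknown as $(\bx_1^{\T},\bx_2^{\T},\bx_3^{\T})^{\T}$, the three rows of $N^{\HH}AN\,(\bx_1^{\T},\bx_2^{\T},\bx_3^{\T})^{\T}=0$ reproduce verbatim the three equations of \cref{eq:lm:Jordan-block-equiv-cond}.

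The argument is essentially mechanical once the reduction to $N^{\HH}AN$ is in place. The only points requiring care are the sign bookkeeping in the $3\times 3$ block expansion, which must line up with the pattern in \cref{eq:lm:Jordan-block-equiv-cond}, and the invocation of regularity to guarantee the resulting chain is defective. I do not anticipate any substantial obstacle beyond careful accounting.
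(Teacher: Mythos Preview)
Your approach is essentially the same as the paper's: reduce the existence of a length-two infinite Jordan chain to the singularity of $N^{\HH}AN$ via $\range(B)=\nullspace(B)^{\perp}$, identify $N=\begin{bmatrix}0\\ I_3\otimes I^{(i)}_\sigma\end{bmatrix}$, and expand $N^{\HH}AN$ block-by-block using the form of $C$ to obtain the three equations in $M_\ell$. The only point you leave implicit is why the Jordan block has size \emph{exactly} two rather than larger; the paper closes this by invoking \cite[Theorem~4.1]{dzli:1991}, which for a regular Hermitian pair with $B\succeq 0$ caps the partial multiplicities of $\infty$ at two.
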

\begin{proof}
	Clearly, $(A,B)$ has a defective infinite eigenvalue if and only if there exist nonzero vectors $\bx, \by$ such that
	\[
		B \bx=0, \quad B\by = A \bx.
	\]
	Thus, $\bx \in\nullspace(B),  A \bx\in\range(B)$. Since $(A,B)$ is regular, $A \bx\ne0$.
	Noticing $\nullspace(B)=\range(B)^\perp$, the equations are equivalent to 
	\[
		0\ne \bx \in\nullspace(B), \quad \nullspace(B)^{\HH}A \bx =0,
	\]
	namely,
	\[
		\nullspace\left( \nullbasis(B)^{\HH}A\nullbasis(B) \right)\ne 0,
	\]
	where $\nullbasis(B)$ is the basis matrix of $\nullspace(B)$.
	Note that with $\gamma_{*} = \sqrt{\varepsilon_i}$, we have
	\begin{align*}
		\nullbasis(B)^{\HH}A\nullbasis(B)
		&= \begin{bmatrix}
			0 \\ I_3\otimes I^{(i)}_\sigma\\
		\end{bmatrix}^{\HH}
		\begin{bmatrix}
			0 & -\ii C \\ \ii C^{\HH} & -\gamma_{*}[(I_3\otimes I^{(i)})C+C^{\HH}(I_3\otimes I^{(i)})]\\
		\end{bmatrix}
		\begin{bmatrix}
			0 \\ I_3\otimes I^{(i)}_\sigma\\
		\end{bmatrix}
		\\&=-\gamma_{*} (I_3\otimes I^{(i)}_\sigma)^{\HH}[(I_3\otimes I^{(i)})C+C^{\HH}(I_3\otimes I^{(i)})](I_3\otimes I^{(i)}_\sigma)
		\\&=-\gamma_{*} (I_3\otimes I^{(i)}_\sigma)^{\HH}[C+C^{\HH}](I_3\otimes I^{(i)}_\sigma)
		\\&=-\gamma_{*} (I_3\otimes I^{(i)}_\sigma)^{\HH}\begin{bmatrix}
			0 & -C_3+C_3^{\HH} & C_2-C_2^{\HH} \\
			C_3-C_3^{\HH} & 0 & -C_1+C_1^{\HH} \\
			-C_2+C_2^{\HH} & C_1-C_1^{\HH} & 0 \\
		\end{bmatrix}(I_3\otimes I^{(i)}_\sigma)
		\\&=-\gamma_{*} (I_3\otimes I^{(i)}_\sigma)^{\HH}\begin{bmatrix}
	 		0 & -M_3 & M_2 \\
	 		M_3 & 0 & -M_1 \\
	 		-M_2 & M_1 & 0 \\
		\end{bmatrix}(I_3\otimes I^{(i)}_\sigma)
		%\\&=:-\gamma_{*} (I_3\otimes I^{(i)}_\sigma)^{\HH}M(I_3\otimes I^{(i)}_\sigma)
		%\\&=-\gamma_{*} (I_3\otimes I^{(i)}_\sigma)^{\HH}(I_3\otimes T)\begin{bmatrix}
			%0 & -\Lambda_3+\Lambda_3^{\HH} & \Lambda_2-\Lambda_2^{\HH} \\
			%\Lambda_3-\Lambda_3^{\HH} & 0 & -\Lambda_1+\Lambda_1^{\HH} \\
			%-\Lambda_2+\Lambda_2^{\HH} & \Lambda_1-\Lambda_1^{\HH} & 0 \\
		%\end{bmatrix}(I_3\otimes T^{\HH})(I_3\otimes I^{(i)}_\sigma)
		%\\&=:-\gamma_{*} (I_3\otimes [T^{\HH}I^{(i)}_\sigma])^{\HH}M(I_3\otimes [T^{\HH}I^{(i)}_\sigma])
		.
	\end{align*}
	%Write $M_i=C_i-C_i^{\HH}=T(\Lambda_i-\Lambda_i^{\HH})T^{\HH}$. Then
	%$M_i=-M_i^{\HH}$ and
	Clearly, $\begin{bmatrix}
		\bx_1^{\T} & \bx_2^{\T} & \bx_3^{\T}
	\end{bmatrix}^{\T}\in \nullspace\left(\nullbasis(B)^{\HH}A\nullbasis(B)\right)$ is equivalent to \cref{eq:lm:Jordan-block-equiv-cond}. Finally, from Theorem 4.1 of \cite{dzli:1991}, the defective infinite eigenvalue has a Jordan block of size two. 
\end{proof}

\begin{theorem}\label{lm:Jordan-block}
	Suppose that $(A,B)$ is regular and $n_\ell>2$.
	The matrix pair $(A,B)$ has a defective infinite eigenvalue, as long as
	a mesh node with its $6$ lattice neighbors (see \cref{ssec:discretization}) are inside the medium,
	i.e.,
	there exist some $i_\ell\in[1,n_\ell],\ell=1,2,3$, such that $\neighbor(i_1,i_2,i_3)\subset \domain_i$, or equivalently, $\coo{i_1,i_2,i_3}\in\domain_i^{\circ}$.

	As a result, $(A,B)$ has a defective infinite eigenvalue, as long as $n_\ell$ is large enough. 
\end{theorem}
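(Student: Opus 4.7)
The plan is to apply \cref{lm:Jordan-block-equiv-cond}, that is, to exhibit an explicit nonzero triple $(\bx_1,\bx_2,\bx_3)$ satisfying the three identities \cref{eq:lm:Jordan-block-equiv-cond}. The structural fact I would rely on is that $C_1,C_2,C_3$ are simultaneously diagonalizable by the unitary $T$ of \cref{lm:C:simul-diag-u}, so $C_a C_b = C_b C_a$ and $C_a C_b^{\HH} = C_b^{\HH} C_a$ for all $a,b\in\set{1,2,3}$. Consequently, the skew-Hermitian matrices $M_\ell = C_\ell-C_\ell^{\HH}$ pairwise commute:
\[
M_a M_b = M_b M_a \qquad \text{for all } a,b\in\set{1,2,3}.
\]

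Given an interior node $p=\coo{i_1,i_2,i_3}\in\domain_i^{\circ}$ with $\neighbor(i_1,i_2,i_3)\subset\domain_i$, I would take
\[
\bx_k := (I^{(i)}_\sigma)^{\HH} M_k e_p, \qquad k=1,2,3.
\]
By the Kronecker form of $C_\ell$ in \cref{eq:C123}, the vector $M_\ell e_p$ is supported only on the two lattice neighbors of $p$ along direction $\ell$; since $\neighbor(i_1,i_2,i_3)\subset\domain_i$, these supports lie entirely in $\domain_i$, so $I^{(i)}_\sigma\bx_k = I^{(i)} M_k e_p = M_k e_p$. Substituting into the three equations of \cref{lm:Jordan-block-equiv-cond} turns each left-hand side into $(I^{(i)}_\sigma)^{\HH}(M_a M_b - M_b M_a) e_p$, which vanishes by the commutativity noted above.

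To confirm nontriviality, a direct computation with the $K_{n_1}$-block in \cref{eq:C123} gives $M_1 e_p = \delta_1^{-1}(e_{p_-}-e_{p_+})$ (up to unit-modulus wrap-around phases), where $p_\pm$ are the two lattice neighbors of $p$ along $\pm\ba_1$. Since $n_1>2$, $p_-\ne p_+$, so $M_1 e_p\ne0$ and hence $\bx_1\ne0$. Thus \cref{lm:Jordan-block-equiv-cond} applies and produces a defective infinite eigenvalue with a $2\times2$ Jordan block. For the final sentence, once each $n_\ell$ is sufficiently large the mesh resolves a medium of positive volume finely enough to contain some node with all six lattice neighbors inside $\domain_i$, and the hypothesis is satisfied.

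The main obstacle, I expect, is not the calculation but the design of the ansatz: \cref{eq:lm:Jordan-block-equiv-cond} looks quite restrictive, and naive choices (for example $\bx_k$ supported at $p$ alone) fail because the three vectors $M_1 e_p, M_2 e_p, M_3 e_p$ have mutually disjoint supports and cannot cancel pairwise. The trick is to choose $\bx_k\propto M_k e_p$ so that the ``curl-of-curl'' pattern in \cref{eq:lm:Jordan-block-equiv-cond} collapses to commutators, and the interior-neighborhood hypothesis is used precisely to ensure that this ansatz survives the projection $(I^{(i)}_\sigma)^{\HH}(\cdot)\,I^{(i)}_\sigma$ without being annihilated.
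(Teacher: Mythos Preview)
Your proposal is correct and follows essentially the same approach as the paper: the paper's proof also chooses $\bx_\ell=(I^{(i)}_\sigma)^{\HH}M_\ell I^{(i)}_\sigma\by$ with $I^{(i)}_\sigma\by=e_{\coo{i_1,i_2,i_3}}$, then reduces \cref{eq:lm:Jordan-block-equiv-cond} to the commutator $[M_a,M_b]=0$ via the interior-neighborhood hypothesis and the simultaneous diagonalizability of $C_1,C_2,C_3$ by $T$. Your version is slightly more streamlined in that you observe $I^{(i)}_\sigma\bx_k=M_k e_p$ directly from the support of $M_k e_p$, whereas the paper routes this through the identity $I^{(i)}_\sigma(I^{(i)}_\sigma)^{\HH}=I-I^{(o)}_\sigma(I^{(o)}_\sigma)^{\HH}$ together with $(I^{(o)}_\sigma)^{\HH}M_\ell e_p=0$; the content is identical.
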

\begin{proof}
	First, we claim that under the assumption there exists a nonzero $\by$ such that
	\begin{equation}\label{eq:constraint:y}
		(I^{(o)}_\sigma)^{\HH}M_{\ell}I^{(i)}_\sigma \by=0,\quad \ell=1,2,3; \qquad
		\sum_{\ell=1}^3 \by^{\HH}(I^{(i)}_\sigma)^{\HH}M_{\ell}^{\HH}M_{\ell}I^{(i)}_\sigma \by\ne 0 .
	\end{equation}
	Then, let $\bx_{\ell}=(I^{(i)}_\sigma)^{\HH}M_{\ell}I^{(i)}_\sigma \by$.
	Note that $M_{\ell}=-M_{\ell}^{\HH}$ and $M_{\ell}=T(\Lambda_{\ell}-\Lambda_{\ell}^{\HH})T^{\HH}$.
	We know $M_1,M_2,M_3$ are simultaneously diagonalizable by the unitary matrix $T$.
	Thus, by the orthogonality relations in the first equations of \cref{eq:constraint:y}, it holds that
	\begin{align*}
		&\quad (I^{(i)}_\sigma)^{\HH}M_{\ell}I^{(i)}_\sigma \bx_{\ell'}
		-(I^{(i)}_\sigma)^{\HH}M_{\ell'}I^{(i)}_\sigma \bx_{\ell} \\*
		&= (I^{(i)}_\sigma)^{\HH}M_{\ell}I^{(i)}_\sigma (I^{(i)}_\sigma)^{\HH}M_{\ell'}I^{(i)}_\sigma \by
		-(I^{(i)}_\sigma)^{\HH}M_{\ell'}I^{(i)}_\sigma (I^{(i)}_\sigma)^{\HH}M_{\ell}I^{(i)}_\sigma \by \\*
		&= (I^{(i)}_\sigma)^{\HH}M_{\ell}[I-I^{(o)}_\sigma (I^{(o)}_\sigma)^{\HH}]M_{\ell'}I^{(i)}_\sigma \by
		-(I^{(i)}_\sigma)^{\HH}M_{\ell'}[I-I^{(o)}_\sigma (I^{(o)}_\sigma)^{\HH}]M_{\ell}I^{(i)}_\sigma \by \\*
		&= (I^{(i)}_\sigma)^{\HH}[M_{\ell}M_{\ell'}-M_{\ell'}M_{\ell}]I^{(i)}_\sigma \by 
		= 0
		,
	\end{align*}
	or equivalently, $\bx_\ell$ satisfy \cref{eq:lm:Jordan-block-equiv-cond}.
	On the other hand,
	\begin{align*}
		\bx_1^{\HH} \bx_1+ \bx_2^{\HH} \bx_2+ \bx_3^{\HH} \bx_3
		&= \sum_\ell \by^{\HH}(I^{(i)}_\sigma)^{\HH}M_{\ell}^{\HH}I^{(i)}_\sigma (I^{(i)}_\sigma)^{\HH}M_{\ell}I^{(i)}_\sigma \by
		\\&= \sum_\ell \by^{\HH}(I^{(i)}_\sigma)^{\HH}M_{\ell}^{\HH}[I-I^{(o)}_\sigma (I^{(o)}_\sigma)^{\HH}]M_{\ell}I^{(i)}_\sigma \by
		\\&= \sum_\ell \by^{\HH}(I^{(i)}_\sigma)^{\HH}M_{\ell}^{\HH}M_{\ell}I^{(i)}_\sigma \by
		\ne0
		,
	\end{align*}
	which means that $\bx_\ell$ are not all zeros.
	Therefore, by \cref{lm:Jordan-block-equiv-cond}, we have the result.

	Finally we prove the claim.
	Since $\neighbor(i_1,i_2,i_3)\subset \domain_i$, we know $\domain_o\subset \domain\setminus\neighbor(i_1,i_2,i_3)$.
	By \cref{eq:lattice-neighbor-property},
	\[
		e_{\coo{i_1,i_2,i_3}}^{\HH}M_\ell I^{(o)}_\sigma=
		e_{\coo{i_1,i_2,i_3}}^{\HH}M_\ell^{\HH} I^{(o)}_\sigma=0.
	\]
	On the other hand,
	$M_\ell e_{\coo{i_1,i_2,i_3}}$ is a column of $M_\ell$ and thus nonzero, as long as $n_\ell>2$.
	Note that there exists $\by$ such that $e_{\coo{i_1,i_2,i_3}}=I^{(i)}_\sigma \by$
	because $e_{\coo{i_1,i_2,i_3}}\in\range(I^{(i)}_\sigma)$.
	Clearly, this $\by$ satisfies \cref{eq:constraint:y}. 
\end{proof}

\begin{remark}\label{rk:lm:Jordan-block}
	Write $M=\begin{bmatrix}
		M_1^{\T} & M_2^{\T} & M_3^{\T}
	\end{bmatrix}^{\T}$.
	From the proofs of \cref{lm:Jordan-block-equiv-cond,lm:Jordan-block},
	we can see that $\begin{bmatrix}
		0 \\ (I_3\otimes [I^{(i)}_\sigma (I^{(i)}_\sigma)^{\HH}])Me_{\coo{i_1,i_2,i_3}}\\
	\end{bmatrix}=\begin{bmatrix}
		0 \\ Me_{\coo{i_1,i_2,i_3}}\\
	\end{bmatrix}
	$ is a corresponding eigenvector of the defective infinite eigenvalue.
\end{remark}

\subsection{The eigenvalue behavior when \texorpdfstring{$\gamma\to\gamma_{*}+0$}{}}\label{ssec:the-eigenvalue-behavior-when-gammatovarepsilon_i-1-2-0-}
First, we observe the eigenvalues of $(A, B) = (A_{\gamma_{*}},B_{\gamma_{*}})$.
Write
\[
	G_m	:= 
	\begin{bmatrix}
		&         & 1       & 0 \\
		& \iddots & \iddots &   \\
		1 & \iddots &         &   \\
		0 &         &         &   \\
	\end{bmatrix}
	_{m\times m}
	,\qquad
	F_m :=
	\begin{bmatrix}
		&         &         & 1 \\
		&         & \iddots &   \\
		& \iddots &         &   \\
		1 &         &         &   \\
	\end{bmatrix}
	_{m\times m}.
\]
By \cite[Theorem~5.10.1]{golr:2005} (also \cite[Theorem~6.1]{laro:2005}),
%considering the regularity,
any Hermitian regular matrix pair $(A,B)$ is congruent to a Hermitian matrix pair which is a direct sum of the following types of blocks:
\newcommand\typ[2][B]{\textbf{#1-#2}}
\begin{description}
\item[\typ{c}.] $ (\begin{bmatrix} &\beta_jF_{k_j}+G_{k_j}\\ \ol{\beta_j}F_{k_j}+G_{k_j}&\end{bmatrix},F_{2k_j}),\quad \beta_j\in \mathbb{C}\setminus \mathbb{R}, \quad j=1, \dots, n_c$, with possible replacement of $\beta_j$ by $\ol{\beta_j}$;
	\item[\typ{r}.] $ \mu_j(\alpha_jF_{k_j}+G_{k_j},F_{k_j}), \quad\alpha_j\in \mathbb{R}, \mu_j\in\set{1,-1}, \quad j=1, \dots, n_r$;
	\item[\typ{$\infty$}.] $ \nu_j(F_{k_j},G_{k_j}), \quad \nu_j\in\set{1,-1}, \quad j=1, \dots, n_{\infty}$.
\end{description}
The form is uniquely determined by $(A,B)$ up to a combination of permutations of those blocks.
Furthermore, $\beta_j,\ol{\beta_j}$ are finite nonreal eigenvalues of $(A,B)$; $\alpha_j$ is its real eigenvalue. 
Those $k_j$'s corresponding to the same value $\alpha$ are called the \emph{partial multiplicities} of $\alpha$;
all $\mu_j$'s and $\nu_j$'s are called the \emph{sign characteristic} of $(A,B)$.
A real eigenvalue $\alpha_j$ with the corresponding $\mu_j=1$ ($\mu_j=-1$) is called an eigenvalue of \emph{positive type} ({\em negative type}).

First, as a consequence of the regularity and \cref{lm:Jordan-block}, we have \cref{lm:eigenvalue-infty}.
\begin{theorem}\label{lm:eigenvalue-infty}
	The matrix pair $(A, B)$ has at most $6\no{\domain_i}$ infinite eigenvalues, each of which is either semisimple or of positive type %\footnote{The type is defined in the next paragraph, namely in the beginning of the next subsection.}
	and associated with a Jordan block of size $2$,
	and at least $6n-6\no{\domain_i}$ semisimple eigenvalues of positive type.
\end{theorem}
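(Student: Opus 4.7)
The proof will combine a dimension count of the infinite eigenspace with an inertia analysis of $B_{\gamma_*}$ via the Hermitian Weierstrass canonical form. The decisive observation is that, since $\gamma_{*}^2 = \varepsilon_i$ cancels the interior contribution, $B_{\gamma_*} = \diag(I_{3n},\, I_3\otimes \varepsilon_o I^{(o)})$ is positive semi-definite, with inertia $(n_+, n_-, n_0) = (6n - 3\no{\domain_i},\, 0,\, 3\no{\domain_i})$.

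First, I would pin down the infinite eigenstructure. Because $\nullspace(B_{\gamma_*}) = \range \begin{bmatrix} 0 \\ I_3 \otimes I^{(i)}_\sigma \end{bmatrix}$ has dimension $3\no{\domain_i}$, the geometric multiplicity of $\infty$ is $3\no{\domain_i}$. By \cref{lm:Jordan-block-equiv-cond} together with Theorem~4.1 of \cite{dzli:1991}, every Jordan block of $\infty$ has size at most $2$. Writing $k_1, k_2$ for the numbers of size-$1$ and size-$2$ blocks respectively, one has $k_1 + k_2 = 3\no{\domain_i}$ and algebraic multiplicity $k_1 + 2k_2 = 3\no{\domain_i} + k_2 \leq 6\no{\domain_i}$, giving the first claim.

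Next, I would invoke the Hermitian Weierstrass canonical form (\cite[Theorem~5.10.1]{golr:2005}) and tabulate the $B$-contribution of each canonical block: a complex conjugate pair of size $2k$ contributes $F_{2k}$ with inertia $(k, k, 0)$; a finite real block $\mu(\alpha F_k + G_k, F_k)$ with $k \geq 2$ contributes $\mu F_k$, whose inertia is $(\lceil k/2\rceil, \lfloor k/2\rfloor, 0)$ up to swap and in particular has at least one negative eigenvalue; a finite size-$1$ block contributes $\mu \in \{+1, -1\}$; an infinite block $\nu(F_2, G_2)$ contributes $\nu G_2$ of inertia $(1,0,1)$ if $\nu = +1$ or $(0,1,1)$ if $\nu = -1$; and an infinite size-$1$ block contributes only to the nullspace. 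The constraint $n_-(B_{\gamma_*}) = 0$ then forces the exclusion of every block that would contribute any negative part, thereby ruling out complex conjugate pairs, finite Jordan blocks of size $\geq 2$, finite size-$1$ blocks of negative type, and size-$2$ infinite blocks of negative type. Consequently every finite eigenvalue is semisimple of positive type, and every size-$2$ infinite Jordan block has $\nu = +1$, i.e.\ is of positive type, proving the second claim.

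The third claim then follows by accounting: the total algebraic multiplicity is $6n$, of which $3\no{\domain_i} + k_2$ lie at infinity, leaving $6n - 3\no{\domain_i} - k_2 \geq 6n - 6\no{\domain_i}$ finite eigenvalues, each semisimple of positive type. The main technical obstacle is the case-by-case tabulation of the signature contributions of the $F_k, G_k$ building blocks; once that is in hand, the whole result collapses to the single inertia constraint $n_-(B_{\gamma_*}) = 0$, which is immediate from the explicit form of $B_{\gamma_*}$.
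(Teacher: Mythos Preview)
Your proof is correct and follows essentially the same approach as the paper: both hinge on the single observation that $B_{\gamma_*}\succeq 0$ with $\dim\nullspace(B_{\gamma_*})=3\no{\domain_i}$, together with the structural theory of regular Hermitian pencils. The paper compresses the entire argument into a one-line citation of \cite[Theorem~4.1]{dzli:1991}, whereas you unpack that result explicitly via the Hermitian Weierstrass form and a block-by-block inertia tabulation on $B$; your version is thus a self-contained reproof of the relevant part of the cited theorem, and in fact your inertia analysis would already yield the ``size at most~$2$'' bound for infinite blocks (since $\nu G_k$ with $k\ge 3$ always contributes a negative eigenvalue) without needing to invoke \cite{dzli:1991} separately.
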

\begin{proof}
	Note that $B \succeq0$ with $\dim\nullspace(B)=3\no{\domain_i}$ and $(A, B)$ is regular.
	The result is a direct consequence of \cite[Theorem 4.1]{dzli:1991} and \cref{lm:Jordan-block}. 
\end{proof}

Then, we provide a necessary condition of the existence of nonreal eigenvalues, or equivalently,
a necessary condition that $Q_\gamma( \omega )$ has a nonreal eigenvalue.
\begin{theorem}\label{lm:necessary-cond-of-nonreal-eigenvalue}
	For $\gamma\to\gamma_{*}+0$, there exist purely imaginary eigenvalues. 
	If $(\omega,\be)$ is an eigenpair of $Q_\gamma(\cdot)$ with $\Im \omega\ne0$,
	then:
	\begin{description}
		\item[\em (a)] $(I_3\otimes I^{(o)})\be=0$, $(I_3\otimes I^{(i)})\be\ne0$;
		\item[\em (b)] $C \be\ne0$, $\Re[\be^{\HH}(I_3\otimes I^{(i)})C \be]=0$;
		%\item $\Re\omega=0$, $\omega=\pm(\gamma^2-\varepsilon_i)^{-1/2}\frac{\N{C \be}_2}{\N{\be}_2}\ii$.
		\item[\em (c)] $\omega$ is pure imaginary, and $\omega=\pm(\gamma^2-\varepsilon_i)^{-1/2}\frac{\N{C \be}_2}{\N{\be}_2}\ii$.
		\item[\em (d)] $\abs{\omega}$ becomes smaller as $\gamma$ becomes larger.
	\end{description}
\end{theorem}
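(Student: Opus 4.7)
The plan is to exploit the scalar identity \cref{eq:rayleigh-quotient-eq}, namely $c(\be) - \omega\gamma b(\be) - \omega^2 q'(\be) = 0$ with $q'(\be) := \varepsilon_o a_o(\be) + (\varepsilon_i - \gamma^2) a_i(\be)$---a real-coefficient quadratic in $\omega$---together with the $2\times 2$ Jordan-block structure at infinity established in \cref{lm:Jordan-block} and \cref{rk:lm:Jordan-block}. Existence of purely imaginary eigenvalues as $\gamma \to \gamma_*+0$ would follow by applying Hermitian perturbation theory to that Jordan block: as $\gamma$ crosses $\gamma_*$ and $B_\gamma$ becomes indefinite, the positive-type $2\times 2$ block at $\infty$ (from \cref{lm:eigenvalue-infty}) must split, and its sign characteristic rules out a split into two real eigenvalues, forcing a complex-conjugate pair; by (c) below, this pair is in fact purely imaginary.

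For (a) and (b), nonrealness of $\omega$ forces the discriminant $\Delta(\be) = \gamma^2 b(\be)^2 + 4 c(\be) q'(\be)$ to be strictly negative. Since $c(\be) \ge 0$ and $\gamma^2 b(\be)^2 \ge 0$, this requires $c(\be) > 0$ (so $C\be \ne 0$) and $q'(\be) < 0$, i.e.\ $(\gamma^2 - \varepsilon_i)\,a_i(\be) > \varepsilon_o\,a_o(\be)$. As $\gamma \to \gamma_*+0$ the left side tends to $0$, forcing $a_o(\be) \to 0$; to promote this to the exact identity $(I_3 \otimes I^{(o)})\be = 0$ in (a), I would invoke the fact that the bifurcating eigenvector continues the $\infty$-eigenvector $M e_j$ from \cref{rk:lm:Jordan-block}, which is exactly supported on $\domain_i$. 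Once (a) holds, $q'(\be) = (\varepsilon_i - \gamma^2)\N{\be}_2^2$, and the imaginary part of the scalar identity yields $\gamma\,b(\be) = -2\,\Re(\omega)(\varepsilon_i - \gamma^2)\N{\be}_2^2$; a direct computation on the leading-order eigenvector $\be = Me_j$ using the commutativity $C_k M_\ell = M_\ell C_k$ (from simultaneous unitary diagonalization in \cref{lm:C:simul-diag-u}) gives $b(Me_j) = 0$, which combined with continuation yields (b).

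With (a) and (b) in hand, (c) is immediate: the scalar identity collapses to $c(\be) = \omega^2 (\varepsilon_i - \gamma^2)\N{\be}_2^2$, so $\omega = \pm\,\ii\,(\gamma^2 - \varepsilon_i)^{-1/2}\,\N{C\be}_2/\N{\be}_2$. For (d), differentiating this formula in $\gamma$ shows that the explicit factor $(\gamma^2 - \varepsilon_i)^{-1/2}$ strictly decreases with $\gamma$, while the Rayleigh ratio $\N{C\be}_2/\N{\be}_2$ depends smoothly and slowly on $\gamma$ (by the implicit function theorem applied to the eigenvector), so $\abs{\omega}$ decreases.

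The main obstacle is upgrading the asymptotic cancellations in (a) and (b) to exact equalities for every $\gamma$ in a right neighborhood of $\gamma_*$. The computation on the leading-order eigenvector $Me_j$ provides the necessary identities, but justifying that the perturbed nonreal eigenvectors stay exactly in $\set{\be : (I_3 \otimes I^{(o)})\be = 0,\ b(\be) = 0}$ requires an invariant-subspace argument built from the block structure of $Q_\gamma(\omega)$ together with the simultaneous diagonalization properties of \cref{lm:C:simul-diag-u}; this structural step is what controls whether the proof remains clean or becomes technical.
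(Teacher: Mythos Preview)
Your approach is essentially the paper's: both hinge on the discriminant inequality $\Delta(\be)<0$ for nonreal $\omega$, combined with the observation that as $\gamma\to\gamma_*^+$ the only negative contribution $4c(\be)(\varepsilon_i-\gamma^2)a_i(\be)$ vanishes, forcing $b(\be)=0$, $c(\be)a_o(\be)=0$, $c(\be)a_i(\be)>0$. Where you diverge is in rigor and in the amount of machinery you propose. The paper treats the entire statement as a \emph{limiting} assertion (note the standing hypothesis ``For $\gamma\to\gamma_*+0$''): it simply says these three conditions are ``forced'' by $\Delta(\be)<0$ in that limit and reads off (a)--(c) via \cref{eq:abc=0}. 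It does \emph{not} attempt to upgrade the asymptotic cancellations to exact equalities for each $\gamma$ in a right neighborhood, so the eigenvector-continuation and invariant-subspace steps you flag as the ``main obstacle'' are absent from the paper's argument. Your concern is legitimate from a strict-rigor standpoint, but it is orthogonal to what the paper actually proves.

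Two smaller differences. For the existence part, the paper writes down an explicit $2\times2$ Hermitian perturbation of the Jordan block at infinity and computes its eigenvalues directly, rather than invoking sign-characteristic perturbation theory as you do; both routes work. For (d), the paper implicitly differentiates the scalar identity \cref{eq:rayleigh-quotient-eq} and obtains the clean formula $\dfrac{\diff\,|\omega|}{\diff\gamma}=\dfrac{2\gamma\omega^2 a_i(\be)}{|\Delta(\be)|^{1/2}}<0$, which is sharper than your argument that the factor $(\gamma^2-\varepsilon_i)^{-1/2}$ dominates while the Rayleigh ratio $\N{C\be}_2/\N{\be}_2$ varies ``slowly'' --- the latter is not justified and is not needed if you differentiate the quadratic relation directly.
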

\begin{proof}
        By \cref{lm:Jordan-block,lm:eigenvalue-infty}, $(A_{\gamma_{*}}, B_{\gamma_{*}})$ has a $2 \times 2$ Jordan block $W_{\gamma_{*}}(\lambda)  \equiv \begin{bmatrix} 0 & 1 \\ 1 & 0 \end{bmatrix} - \lambda \begin{bmatrix} 1 & 0 \\ 0 & 0 \end{bmatrix}$ at infinity. Let $W_{\gamma}(\lambda) \equiv \begin{bmatrix} 0 & 1 \\ 1 & 0 \end{bmatrix} - \lambda \begin{bmatrix} 1 & \eta \\ \eta & -\eta \end{bmatrix}$ be a small perturbation of $W_{\gamma_{*}}(\lambda)$ with $\eta \to 0^{+}$, as $\gamma \to \gamma_{*}^{+}$. Then, $W_{\gamma}(\lambda)$ has a complex eigenvalue $\omega$ of the form $\omega \equiv \frac{1}{1+\eta} + \ii \frac{1}{\sqrt{\eta}(1+\eta)}$ with $\Im \omega \neq 0$.
	By \cref{eq:rayleigh-quotient},
	it implies that $\Delta(\be)<0$.
	%\[
	%	\gamma^2b(\be)^2+4c(\be)\varepsilon_oa_o(\be)+4c(\be)(\varepsilon_i-\gamma^2)a_i(\be)<0.
	%\]
	Then, with $\gamma\to\gamma_{*}^{+}$, it forces that
	\[
		b(\be)=0, \quad c(\be)a_o(\be)=0, \quad c(\be)a_i(\be)>0,
	\]
	which, together with \cref{eq:abc=0}, implies the results of (a), (b) and (c). From \cref{eq:rayleigh-quotient-eq}, 
	item~(d) holds because
	\[
		\frac{\diff \,(\ii\omega)}{\diff \gamma}
		=\frac{2\gamma\omega^2 a_i(\be)}{\pm \abs{\Delta(\be)}^{1/2}}
		\qquad \Rightarrow \qquad
		\frac{\diff \,\abs{\omega}}{\diff \gamma} 
		=\frac{2\gamma\omega^2 a_i(\be)}{\abs{\Delta(\be)}^{1/2}}
		<0.
		%\qedhere
	\] 
\end{proof}

\subsection{Behavior of real eigenvalues}\label{ssec:behavior-of-real-eigenvalues}
%Then we will show these pure imaginary eigenvalues become a positive and a negative eigenvalues when $\gamma$ becomes larger.
As we pointed out above, all the eigenvalues of the matrix pair $(A_\gamma,B_\gamma)$ are real if $\gamma< \gamma_* \equiv \sqrt{\varepsilon_i}$.
Now we begin to check the case $\gamma>\gamma_*$.

First, we build a relation between the change of inertia and the change of the number of real eigenvalues.
For any Hermitian matrix $X$, denote by $\inertia_+(X),\inertia_-(X)$ the positive and negative indices of the inertia of $X$, respectively.

\begin{theorem}\label{lm:inertia-and-eigenvalue}
	Let $C_\gamma(\omega)=A_\gamma-\omega B_\gamma$ with $\omega\in \mathbb{R}$, where $A_\gamma, B_\gamma$ are defined as in \cref{eq:equivGEP},
	and $B_\gamma$ is nonsingular.
	%satisfying the matrix pair $(A_\gamma, B_\gamma)$ is regular for all $\gamma\ge0$.
	%is a matrix-value function of $\gamma\in \mathbb{R}$. %which is holomorphic.
	%Provided that the sum of all geometric multiplicities keeps,
	%if $\inertia_+( A_{\gamma+0}(\omega))\ne\inertia_-(A_{\gamma-0}(\omega))$ and $\inertia_-(A_{\gamma+0}(\omega))\ne\inertia_-(A_{\gamma-0}(\omega))$,
	%then $\omega$ is an eigenvalue of the matrix pair $(A(\gamma), B(\gamma))$.
	\begin{description}
		\item[\em (a)] 
			if $\inertia_+( C_{\gamma+0}(\omega))-\inertia_+(C_{\gamma-0}(\omega))=t$ and $\inertia_-(C_{\gamma+0}(\omega))-\inertia_-(C_{\gamma-0}(\omega))=-t$,
			then $\omega$ is an eigenvalue associated with $t$ Jordan blocks of odd size, of the matrix pair $(A_\gamma, B_\gamma)$, which is either of positive type and monotonically increasing, or of negative type and monotonically decreasing;
		\item[\em (b)] 
			if $\inertia_+( C_{\gamma+0}(\omega))-\inertia_{+}(C_{\gamma-0}(\omega))=-t$ and $\inertia_-(C_{\gamma+0}(\omega))-\inertia_-(C_{\gamma-0}(\omega))=t$,
			then $\omega$ is an eigenvalue associated with $t$ Jordan blocks of odd size, of the matrix pair $(A_\gamma, B_\gamma)$, which is either of positive type and monotonically decreasing, or of negative type and monotonically increasing.
	\end{description}
\end{theorem}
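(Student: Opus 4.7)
The plan is to localize the analysis to each Jordan block of $(A_{\gamma_0}, B_{\gamma_0})$ at the eigenvalue $\omega$ via the Hermitian canonical form recalled above, then link the Hermitian inertia jump of $C_\gamma(\omega) = A_\gamma - \omega B_\gamma$ to the local motion of the pencil eigenvalue through $\omega$ using the sign characteristic. Since $B_\gamma$ is nonsingular, the pencil is regular with all finite eigenvalues and no $\boldsymbol{\infty}$-blocks appear. A direct off-diagonal computation shows that every \textbf{B-c} block at non-real $\beta_j$ and every \textbf{B-r} block at real $\alpha \neq \omega$ yields a nonsingular restriction of $C_{\gamma_0}(\omega)$, contributing nothing to the inertia change at $\gamma_0$. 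The jump is therefore localized to \textbf{B-r} blocks at $\alpha = \omega$, and I focus on one such block of odd size $k$ with sign characteristic $\mu_j \in \{\pm 1\}$.

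In the canonical basis of this block, $C_{\gamma_0}(\omega)$ restricts to $\mu_j G_k$, which has one-dimensional kernel spanned by $e_k$ and inertia $((k-1)/2,(k-1)/2,1)$. Under the $\gamma$-perturbation the restriction becomes $\mu_j G_k + (\gamma-\gamma_0)\, R_j + O((\gamma-\gamma_0)^2)$, where $R_j$ is the block-restriction of $\dot A_\gamma - \omega \dot B_\gamma$ at $\gamma_0$. Standard first-order perturbation of the simple zero eigenvalue yields
\[
    \mu_0(\gamma) = (\gamma-\gamma_0)\, e_k^{\HH} R_j e_k + O((\gamma-\gamma_0)^2),
\]
so the block contributes $(+1,-1)$ to $(\inertia_+, \inertia_-)$ when $e_k^{\HH} R_j e_k > 0$ and $(-1,+1)$ when $e_k^{\HH} R_j e_k < 0$.

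To connect this with the motion of the pencil eigenvalue, I would use the identity $\mathrm{adj}(G_k) = \det(F_{k-1})\, e_k e_k^{\HH}$ (which follows from the one-dimensional kernel of $G_k$ and the $(k,k)$-cofactor being $\det(F_{k-1})$), together with $\det(F_k) = (-1)^{k(k-1)/2}$ and the nilpotency of $F_k G_k$. Expanding the block characteristic equation $\det(-\tau F_k + G_k + \epsilon\, \mu_j^{-1} R_j) = 0$ at leading order in $\tau := \omega_j(\gamma) - \omega$ and $\epsilon := \gamma - \gamma_0$ gives
\[
    \tau^k = \mu_j\, \epsilon\, e_k^{\HH} R_j e_k + \text{(higher order)}.
\]
For odd $k$, the unique real $k$-th root defines a continuous real branch $\omega_j(\gamma)$ through $\omega$ with $\sign(\omega_j(\gamma) - \omega) = \mu_j \cdot \sign(\mu_0(\gamma))$. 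Combining with the inertia computation, case (a) ($e_k^{\HH} R_j e_k > 0$) forces either $\mu_j = +1$ with $\omega_j$ monotonically increasing through $\omega$, or $\mu_j = -1$ with $\omega_j$ monotonically decreasing through $\omega$; case (b) is symmetric. Summing over the $t$ odd blocks accounts for the net inertia change.

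The main obstacle is the case $k > 1$: the pencil branch $\omega_j(\gamma)$ is not smooth in $\gamma$ (one has $\omega_j - \omega \sim \epsilon^{1/k}$), so the usual first-order eigenvalue-derivative formula for the pencil fails; the argument above must bridge the smooth Hermitian eigenvalue $\mu_0(\gamma)$ of $C_\gamma(\omega)$ and the non-smooth pencil branch $\omega_j(\gamma)$ via the explicit determinantal/adjugate computation. Even-size \textbf{B-r} blocks at $\omega$ would need a separate analysis, since they yield transitions between real and complex eigenvalue pairs rather than monotonic real crossings, but under the theorem's implicit restriction to odd Jordan blocks this situation is excluded.
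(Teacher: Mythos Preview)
Your approach differs substantially from the paper's. Instead of perturbing in a fixed canonical basis at $\gamma_0$, the paper computes the inertia of $C_\gamma(\omega)$ in the canonical form of $(A_\gamma,B_\gamma)$ at each nearby $\gamma$, where the matrix is automatically block-diagonal; it tabulates the inertia contribution of every block type as an explicit function of $\sign(\alpha_j(\gamma)-\omega)$ and $\mu_j$, and then enumerates all possible transitions of the block structure as $\gamma$ passes the critical value (a persistent odd block whose $\alpha_j$ crosses $\omega$, a sign-characteristic flip, an even block splitting into two odd ones or collapsing into a complex pair, and the reverses). The inertia jump is read off from this table, with no eigenvalue-derivative or adjugate calculation needed, and the $\gamma$-dependent basis keeps the blocks genuinely decoupled throughout.

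Your argument has two gaps. First, at the step ``first-order perturbation of the simple zero eigenvalue'': the theorem concerns $t$ blocks at $\omega$, so the zero eigenvalue of $C_{\gamma_0}(\omega)$ has multiplicity at least $t$ and is not simple. Its splitting is governed by the full $t\times t$ compression of $\dot A_{\gamma_0}-\omega\dot B_{\gamma_0}$ to the null space, whose off-diagonal entries are precisely the inter-block coupling your ``block restriction $R_j$'' drops; the same coupling invalidates your block characteristic equation for the pencil branch, since with several sub-blocks singular at $\tau=0$ the adjugate of the unperturbed block-diagonal part vanishes and the leading $\epsilon$-term of the full determinant is controlled by the cross-block entries rather than by the individual $e_{k_j}^{\HH}R_je_{k_j}$. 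Second, your last sentence misreads the statement: that the contributing blocks are of odd size is a conclusion, not a hypothesis. Even-size blocks at $\omega$ must be shown to contribute zero net inertia jump---the paper does this by noting that $\mu_j((\alpha_j-\omega)F_{2s}+G_{2s})$ has inertia $(s,s)$ for every $\alpha_j\neq\omega$---and you omit this step.
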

\begin{proof}
	First, we consider the inertia of the matrix $C_\gamma(\omega)$ for a fixed $\gamma$.
	Let us discuss the inertia of those blocks one after another, except \typ{$\infty$}. Note that from \cref{ssec:equivalent-eigenvalue-problems}, we know that both $\omega$ and $\bar{\omega}$ with $\Im \omega \neq 0$ are eigenvalues of $(A_{\gamma}, B_{\gamma})$. 
	\begin{enumerate}
		\item[(a)] \typ{c}:
		the corresponding matrix is $$L = \begin{bmatrix} &(\beta_j-\omega)F_{m_j}+G_{m_j}\\ (\ol{\beta_j}-\omega)F_{m_j}+G_{m_j}&\end{bmatrix}$$ whose indices of inertia are $\inertia_+(L)=m_j,\inertia_-(L)=m_j$;
		\item[(b)] \typ{re}, \typ{r} of even size $k_j=2s$:
			the corresponding matrix is $$L=\mu_j([\alpha_j-\omega]F_{k_j}+G_{k_j})$$
			whose indices of inertia are $\inertia_+(L)=s,\inertia_-(L)=s$ if $\omega\ne\alpha_j$, or the same as $\mu_jF_{k_j-1}$, namely, $s-1+\frac{1+\mu_j}{2}$ and $s-1+\frac{1-\mu_j}{2}$, if $\omega=\alpha_j$;
		\item[(c)] \typ{ro}, \typ{r} of odd size $k_j=2s-1$:
			the corresponding matrix is $$L=\mu_j([\alpha_j-\omega]F_{k_j}+G_{k_j})$$
			whose indices of inertia are $\inertia_+(L)=s-1+\frac{1+\mu_j\sign(\alpha_j-\omega)}{2},\inertia_-(L)=s-1+\frac{1-\mu_j\sign(\alpha_j-\omega)}{2}$ if $\omega\ne\alpha_j$, or the same as $\mu_jF_{k_j-1}$, namely, $s-1$ and $s-1$ if $\omega=\alpha_j$.
	\end{enumerate}
	Recall the form of $A_\gamma=C_\gamma(0)$. % in \cref{eq:equivGEP}.
	It can be seen that $\inertia_+(A_\gamma)=\inertia_-(A_\gamma)$ for any $\gamma$.
	Thus, counting the inertia of the blocks of different types, we have
	\[
		\text{
			no.\ of \typ{re}($_{\alpha=0}^{\mu=1}$)
			+ no.\ of \typ{ro}($_{\alpha\ne0}^{\mu=1}$)
			=
			no.\ of \typ{re}($_{\alpha\ne0}^{\mu=-1}$)
			+ no.\ of \typ{ro}($_{\alpha=0}^{\mu=-1}$).
		}
	\]

	Note that the eigenvalues, as the functions of the entries of the matrix, are continuous.
	%Similarly, the sign characteristic, namely $\mu_j$ are continuous.
	As $\gamma$ goes from $\gamma_1$ to $\gamma_2$, the structure of the blocks may change in one or some combination of the ways below, provided $\omega$ is not an eigenvalue of either $(A_{\gamma_1},B_{\gamma_1})$ or $(A_{\gamma_2},B_{\gamma_2})$:
	\begin{enumerate}
		\item[(a)] $\typ{c} \to \typ{c}$: the indices of inertia are the same;
		\item[(b)] $\typ{re} \to \typ{c}$: the indices of inertia are the same;
		\item[(c)] $\typ{re} \to \typ{re}$: the indices of inertia are the same;
		\item[(d)] $\typ{ro}(\mu=1) \to \typ{ro}(\mu=1)$:
			the indices of inertia are the same if $\omega$ is not between $\alpha(\gamma_1)$ and $\alpha(\gamma_2)$,
			or the positive index decreases $1$ and the negative index increases $1$ if $\alpha(\gamma_2)<\omega<\alpha(\gamma_1)$,
			or the positive index increases $1$ and the negative index decreases $1$ if $\alpha(\gamma_2)>\omega>\alpha(\gamma_1)$;
		\item[(e)] $\typ{ro}(\mu=1) \to \typ{ro}(\mu=-1)$: 
			the indices of inertia are the same if $\omega$ is between $\alpha(\gamma_1)$ and $\alpha(\gamma_2)$,
			or the positive index decreases $1$ and the negative index increases $1$ if $\omega<\alpha(\gamma_1),\omega<\alpha(\gamma_2)$,
			or the positive index increases $1$ and the negative index decreases $1$ if $\omega>\alpha(\gamma_1),\omega>\alpha(\gamma_2)$;
		\item[(f)] $\typ{re} \to \typ{re} + \typ{re}$: the indices of inertia are the same;
		\item[(g)] $\typ{ro}(\mu=1) \to \typ{re}(\mu=1) + \typ{ro}(\mu=1)$:
			the indices of inertia are the same if $\omega$ is not between $\alpha(\gamma_1)$ and $\alpha(\gamma_2)$,
			or the positive index decreases $1$ and the negative index increases $1$ if $\alpha(\gamma_2)<\omega<\alpha(\gamma_1)$,
			or the positive index increases $1$ and the negative index decreases $1$ if $\alpha(\gamma_2)>\omega>\alpha(\gamma_1)$, noticing that $\alpha(\gamma_2)$ is the eigenvalue of \typ{ro};
		\item[(h)] $\typ{re}(\mu=1) \to \typ{ro}(\mu=1) + \typ{ro}(\mu=1)$:
			the indices of inertia are the same if $\omega$ is between $\alpha(\gamma_2)$ and $\alpha'(\gamma_2)$,
			or the positive index decreases $1$ and the negative index increases $1$ if $\omega<\alpha(\gamma_2),\omega<\alpha'(\gamma_2)$,
			or the positive index increases $1$ and the negative index decreases $1$ if $\omega>\alpha(\gamma_2),\omega>\alpha'(\gamma_2)$, noticing that $\alpha(\gamma_2),\alpha'(\gamma_2)$ are the eigenvalues of two \typ{ro}'s;
		\item[(i)] all the reverse (go from right to left) and all the opposite (change $\mu$'s sign).
	\end{enumerate}

	For simplicity, we will not list all the cases.
	Some illustrations are given below.
	\begin{enumerate}
		\item[(I)] As we said before, any change of the structure of the blocks can be expressed as one or some combination of the cases listed above.
			For example, $\typ{c} \to \typ{ro}(\mu=1) + \typ{ro}(\mu=-1)$ can be treated as $\typ{c} \to \typ{re} \to \typ{ro}(\mu=1) + \typ{ro}(\mu=1) \to \typ{ro}(\mu=1) + \typ{ro}(\mu=-1)$, namely, the combination of the reverse of item~(b), item~(h), and item~(e). 
			Note that we use a sequential form to represent it but it does not occur sequentially. However, representing the form sequentially does not affect counting the inertia.
		\item[(II)] Noticing that $p_+(A_\gamma)=p_-(A_\gamma)$, item~(e) or item~(h) cannot happen singly.
			For example, for item~(e), the case in which $\typ{ro}(\mu=1) \to \typ{ro}(\mu=-1)$ happens on only one block and other blocks remain the same will break the equality that  $p_+(A_\gamma)=p_-(A_\gamma)$.
		\item[(III)] Item~(e) may happen together with its reverse, namely, $\typ{ro}(\mu=1) \to \typ{ro}(\mu=-1)$ and $\typ{ro}(\mu=-1) \to \typ{ro}(\mu=1)$ happen simultaneously.
			However, if the involved eigenvalues are not the same, then $\omega$ cannot be both between $\alpha_1(\gamma_1),\alpha_1(\gamma_2)$ and between $\alpha_2(\gamma_1),\alpha_2(\gamma_2)$;
			otherwise, we can treat the case as item~(d) that happens with its opposite, namely, $\typ{ro}(\mu=1) \to \typ{ro}(\mu=1)$ and $\typ{ro}(\mu=-1) \to \typ{ro}(\mu=-1)$ happen simultaneously.
%editor: Please ensure that the intended meaning has been maintained in the edits in the previous sentence.			
		\item[(IV)] If $\omega$ is between $\alpha(\gamma_1)$ and $\alpha(\gamma_2)$, then according to the continuity, $\omega$ must be an eigenvalue of $(A_{\gamma_{*}},B_{\gamma_{*}})$ for some $\gamma_{*}$ between $\gamma_1,\gamma_2$.
	\end{enumerate}
	After a systematical check, we have the result as the summary. 
\end{proof}

\begin{theorem} \label{thm:bifurcation_ew}
     The case $\mbox{\em \typ{c}} \to \mbox{\em \typ{re}}  \to \mbox{\em \typ{ro}}(\mu=1) + \mbox{\em \typ{ro}}(\mu=-1)$ occurs generically, i.e., if the complex conjugate eigenvalue curves $\beta(\gamma)$ and $\bar{\beta}(\gamma)$ collide at $\alpha(\gamma_1) = \beta(\gamma_1) = \bar{\beta}(\gamma_1) \in \mathbb{R}$ with $\gamma = \gamma_1 > \gamma_{*}$, then it would bifurcate into two real eigenvalues $\alpha_{\ell}(\gamma^{+}_1) ( \mu = 1)$ and $\alpha_{r}(\gamma_1^{+}) (\mu = -1)$.
\end{theorem}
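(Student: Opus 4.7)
The plan is to trace the Gohberg--Lancaster--Rodman canonical blocks across $\gamma=\gamma_1$, using \cref{lm:inertia-and-eigenvalue} as the main structural tool and closing with a Puiseux expansion around the Jordan point.

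First, I would show that at $\gamma=\gamma_1$ the merged real eigenvalue $\alpha(\gamma_1)$ is generically non-derogatory. For a real-analytic Hermitian pencil with indefinite $B_\gamma$ (which applies since $\gamma_1>\gamma_{*}$), a conjugate pair $\beta(\gamma),\ol{\beta(\gamma)}$ coalescing on the real axis produces a $2\times 2$ Jordan block unless a codimension-$\geq 2$ degeneracy forces semisimplicity; hence generically the merged spectrum at $\gamma_1$ contains a single \typ{re} block with $k_j=2$ and some sign $\mu_1\in\{+1,-1\}$. Because the local inertia of a \typ{c} block with $k_j=1$ and of an \typ{re} block with $k_j=2$ both equal $(1,1)$ at any $\omega\neq\alpha(\gamma_1)$, the \typ{c}-to-\typ{re} merger is compatible with the continuity of inertia exploited in the proof of \cref{lm:inertia-and-eigenvalue}.

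Next, for $\gamma>\gamma_1$ the Jordan block is unstable under generic analytic Hermitian perturbation: it must either fall back into a complex conjugate pair (already accounted for on the $\gamma<\gamma_1$ side) or split into two simple real eigenvalues $\alpha_\ell(\gamma)<\alpha_r(\gamma)$. In the latter case, the canonical form yields two \typ{ro} blocks of size $1$ with signs $\mu_\ell,\mu_r\in\{+1,-1\}$. To obtain $\mu_\ell=-\mu_r$, fix $\omega_0$ slightly larger than $\alpha(\gamma_1)$, so that for $\gamma$ in a right one-sided neighbourhood $(\gamma_1,\gamma_2)$ one has $\alpha_\ell(\gamma)<\alpha_r(\gamma)<\omega_0$. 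Using the inertia formulas listed at the beginning of \cref{ssec:the-eigenvalue-behavior-when-gammatovarepsilon_i-1-2-0-}, the contribution of the two \typ{ro} blocks to the inertia of $A_\gamma-\omega_0 B_\gamma$ equals
\[
\Bigl(1-\tfrac{\mu_\ell+\mu_r}{2},\ 1+\tfrac{\mu_\ell+\mu_r}{2}\Bigr),
\]
whereas on the $\gamma<\gamma_1$ side the associated \typ{c} block contributes $(1,1)$. Since $\omega_0$ is not an eigenvalue of $(A_\gamma,B_\gamma)$ in this one-sided neighbourhood of $\gamma_1$, the inertia is continuous in $\gamma$ there, forcing $\mu_\ell+\mu_r=0$.

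The final and most delicate step is the specific assignment $\mu_\ell=+1$, $\mu_r=-1$. I would localize to the Jordan block through the canonical form $A_0=\mu_1(\alpha(\gamma_1)F_2+G_2)$, $B_0=F_2$, and write the Puiseux expansion
\[
v_\pm=v_0+\sqrt{\gamma-\gamma_1}\,\mu_1\lambda_\pm w+O(\gamma-\gamma_1),
\]
where $(v_0,w)$ is the Jordan chain of the block. A direct bilinear computation then gives $v_\pm^{\HH}B_\gamma v_\pm\sim 2\sqrt{\gamma-\gamma_1}\,\mu_1\lambda_\pm$ to leading order, so $\mu_r=\mu_1$ and $\mu_\ell=-\mu_1$, whence the theorem's labelling corresponds to $\mu_1=-1$. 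The main obstacle is verifying that in the concrete pencil \cref{eq:equivGEP} the Jordan block formed at $\gamma_1$ indeed carries this sign; this reduces to a sign analysis of the coupling $v_0^{\HH}[\partial_\gamma A_\gamma-\alpha(\gamma_1)\partial_\gamma B_\gamma]_{\gamma_1}v_0$ against $w^{\HH}B_{\gamma_1}w$, and should be consistent with the monotonicity direction encoded by \cref{thm:derivative_omega}, whose Rayleigh-quotient argument extends past $\gamma_{*}$ on the invariant subspace complementary to the Jordan chain.
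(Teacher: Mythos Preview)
Your route and the paper's share the core idea---localize to a $2\times2$ canonical block at $\gamma_1$ and read off the $\sqrt{\gamma-\gamma_1}$ Puiseux branching---but differ in execution. The paper writes down explicit $2\times2$ model pencils for \typ{c}, \typ{re}, and \typ{ro}$(\mu{=}1)+\typ{ro}(\mu{=}-1)$, perturbs the $(2,2)$ entry by $\pm\eta$, and applies equivalence transformations to exhibit $\beta(\gamma_1^-)=\alpha(\gamma_1)+\sqrt{\eta}\,\ii$ and $\alpha_{\ell,r}(\gamma_1^+)=\alpha(\gamma_1)\mp\sqrt{\eta}$; what it actually establishes is that the tangent lines of the eigenvalue curves are orthogonal to the real and imaginary axes at $\gamma_1$. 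Your inertia-continuity argument for $\mu_\ell+\mu_r=0$ is a genuine addition: the paper's equivalence transformations do not preserve the sign characteristic, so its model does not by itself show that the two emerging real branches carry opposite signs.

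The gap you flag in the final step is real, and your proposed fix does not work. You correctly reduce the labeling $(\mu_\ell,\mu_r)=(+1,-1)$ to the sign $\mu_1$ of the \typ{re} block (note the canonical $B_0$ is $\mu_1F_2$, not $F_2$, though your displayed formula for $v_\pm^{\HH}B_\gamma v_\pm$ already incorporates the extra $\mu_1$), and your conclusion $\mu_r=\mu_1$ is correct. But invoking \cref{thm:derivative_omega} to determine $\mu_1$ fails: that result is proved only for $\gamma<\gamma_*$, where $\widehat A_r\succ0$ permits the normalization $\by_r^{\HH}\widehat A_r\by_r=1$ and where the sign of $d(\gamma)$ is governed by the dominant eigenvalue of $W$. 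For $\gamma_1>\gamma_*$ both ingredients collapse, and restricting to an invariant subspace complementary to the Jordan chain does not restore them. The paper itself does not derive the specific assignment within this proof; it is effectively settled afterwards through \cref{lm:nonreal-eigenvalue-burst}, which tracks $\inertia_\pm(A_\gamma-\alpha B_\gamma)$ as $\alpha\to0^\pm$ and $\gamma$ grows past $\gamma_*$. If you want to close the gap, that inertia computation---not an extension of \cref{thm:derivative_omega}---is the right tool.
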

\begin{proof}
      \cref{lm:eigenvalue-infty,lm:necessary-cond-of-nonreal-eigenvalue} show that $(A_{\gamma_{*}}, B_{\gamma_{*}})$ has a $2 \times 2$ Jordan block at infinity and a  purely imaginary eigenpair $\{ \pm \ii \omega(\gamma) \}$ is created for $\gamma \to \gamma_{*}^{+}$ with $\frac{d |\omega|}{d \gamma} < 0$. Then, from \cref{eq:rayleigh-quotient}, we denote the complex conjugate eigenvalue pair by $\{ \beta(\gamma), \bar{\beta}(\gamma) \}$ with $\beta(\gamma_{*}^{+}) = \ii \omega(\gamma_{*}^{+})$ and $\bar{\beta}(\gamma_{*}^{+}) = -\ii \omega(\gamma_{*}^{+})$ which will collide at $\alpha_{\ell}(\gamma_1) = \alpha_{r}(\gamma_1) \in \mathbb{R}$ with $\gamma_1 > \gamma_{*}$. Consequently, 
      it is sufficient to show that the tangent lines of $\beta(\gamma)\cup \bar{\beta}(\gamma)$ and $\alpha(\gamma) \equiv \alpha_{\ell}(\gamma) \cup \alpha_r(\gamma)$ at $\gamma = \gamma_1$ are orthogonal to the real $x$- and imaginary $y$-axes, respectively. Without loss of generality, we consider the following combinations with small perturbation $\eta \equiv \eta(\gamma) \to 0^{+}$ as $\gamma \to \gamma_1^{\pm}$. 
      
      \begin{enumerate}
            \item[(a)] $\typ{c} (\gamma \to \gamma_1^{-})$:
                  \[
                       \left( \begin{bmatrix}
                             0 & \alpha(\gamma_1) + \sqrt{\eta}\ii \\ \alpha(\gamma_1) - \sqrt{\eta}\ii & 0
                       \end{bmatrix}, \begin{bmatrix}
                             0 & 1 \\ 1 & 0
                       \end{bmatrix} \right) \overset{\mbox{\small eq.}}{\sim} \left( \begin{bmatrix}
                             1 & \alpha(\gamma_1) \\ \alpha(\gamma_1)  & -\eta
                       \end{bmatrix}, \begin{bmatrix}
                             0 & 1 \\ 1 & 0
                       \end{bmatrix} \right) %REVIEWER%\to
                  \]
                  Here and hereafter, ``$\overset{\mbox{\small eq.}}{\sim}$'' denotes the equivalence transformation between two matrix pairs.
                  
            \item[(b)]  $\typ{re}(\gamma = \gamma_1)$:
                  \[
                        \left( \begin{bmatrix}
                             1 & \alpha(\gamma_1) \\ \alpha(\gamma_1)  & 0
                       \end{bmatrix}, \begin{bmatrix}
                             0 & 1 \\ 1 & 0
                       \end{bmatrix} \right)  %REVIEWER%\to
                  \]
                  
            \item[(c)] $\typ{ro}(\mu=1) + \typ{ro}(\mu=-1)(\gamma \to \gamma_1^{+})$:
                 \[
                       \left( \begin{bmatrix}
                             1 & \alpha(\gamma_1) \\ \alpha(\gamma_1)  & \eta
                       \end{bmatrix}, \begin{bmatrix}
                             0 & 1 \\ 1 & 0
                       \end{bmatrix} \right) \overset{\mbox{\small eq.}}{\sim} \left( \begin{bmatrix}
                             \alpha(\gamma_1) + \sqrt{\eta} & 0  \\ 0 & \alpha(\gamma_1) - \sqrt{\eta} 
                       \end{bmatrix}, \begin{bmatrix}
                             1 & 0 \\ 0 & 1
                       \end{bmatrix} \right) %\to
                 \]
      \end{enumerate}
      
      Note that here it holds that
      \begin{subequations}
           \begin{align}
                 \beta(\gamma_1^{-}) &\equiv \alpha(\gamma_1) + \sqrt{\eta} \ii, \label{eq:beta_gamma} \\
                 \alpha_{\ell,r}(\gamma_1^{+}) &\equiv \alpha(\gamma_1) \mp \sqrt{\eta}\ (\mu = \pm 1). \label{eq:alpha_lr}
           \end{align}
      \end{subequations}
      In \cref{eq:beta_gamma}, by letting $y = \sqrt{\eta}$, we then have $\left. \frac{d y}{d\eta} \right|_{\eta = 0} = \infty$ if and only if $\left. \frac{d\eta}{d y} \right|_{y=0\ (\gamma = \gamma_1)} = 0$. Similarly, letting $x = \sqrt{\eta}$, from \cref{eq:alpha_lr} it follows that $\left. \frac{d\eta}{d x} \right|_{x=0\ (\gamma = \gamma_1)} = 0$. As a result, we have the theorem. 
\end{proof}

From \cref{eq:SVD_C}, the SVD of $C$ is written as
\begin{align*}
     C = \begin{bmatrix}
				     P_r & P_0
				\end{bmatrix} \begin{bmatrix}
				     \Sigma & 0 \\ 0 & 0
				\end{bmatrix} \begin{bmatrix}
				     Q_r & Q_0
				\end{bmatrix}^{\HH} =P_r \Sigma Q_r^{\HH} \quad \mbox{with } \ \Sigma \succ 0.
\end{align*}
We denote
\begin{equation}
     U_0 := (I_3 \otimes I^{(o)}) Q_0, \quad U_1 := P_r^{\HH} (I_3 \otimes I^{(i)}) Q_0, \quad U_2 := P_0^{\HH} ( I_3 \otimes I^{(i)} ) Q_0 \label{eq:mtx_Ui}
\end{equation}
and use ``$\sim$'' to denote the congruence transformation that two Hermitian matrices have the same inertia. We have the following useful lemma. 

\begin{lemma}\label{lm:nonreal-eigenvalue-burst}
	Suppose $\bk\ne0$ and $n_\ell>4$. Then, it holds that $U_2 \neq 0$ and
	\begin{equation}
	     A_{\gamma} - \alpha B_{\gamma} \sim C_{\gamma}(\alpha) \equiv \diag \left( -\alpha I_{3n}, \frac{1}{\alpha} \Sigma^2, -\alpha \left[ \varepsilon_0 U_0^{\HH} U_0 + \varepsilon_i U_1^{\HH} U_1 + (\varepsilon_i - \gamma^2) U_2^{\HH} U_2 \right]\right) \label{eq:inertia_A_B}
	\end{equation}
	as $\alpha \to 0$. Furthermore, $\alpha = 0^{-}$ and $\alpha = 0^{+}$ are, respectively, the eigenvalues of $(A_{\gamma}, B_{\gamma})$ with some $\gamma \equiv \gamma^{-} > \gamma_{*}$ and $\gamma \equiv \gamma^{+} > \gamma_{*}$. 
\end{lemma}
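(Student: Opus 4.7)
The plan is: (1) derive the claimed congruence via two successive block Schur-complement reductions that exploit the SVD in \cref{eq:SVD_C}; (2) verify $U_2 \neq 0$ using the defective-eigenvector structure supplied by \cref{rk:lm:Jordan-block}; (3) deduce the eigenvalue-crossing statement by an inertia count combined with \cref{lm:inertia-and-eigenvalue}.

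For the congruence, write $A_\gamma - \alpha B_\gamma$ as the $2\times 2$ block matrix with diagonal blocks $-\alpha I_{3n}$ and $D_\gamma - \alpha E_\gamma$ and off-diagonals $\mp\ii C$, $\pm\ii C^{\HH}$, where $D_\gamma := -\gamma[(I_3\otimes I^{(i)})C + C^{\HH}(I_3\otimes I^{(i)})]$ and $E_\gamma := I_3 \otimes [\varepsilon_o I^{(o)} + (\varepsilon_i-\gamma^2)I^{(i)}]$. A block-upper-triangular congruence with upper-right block $-\ii C/\alpha$ (nonsingular for $\alpha\ne0$) eliminates the off-diagonals and replaces the $(2,2)$ block by $D_\gamma - \alpha E_\gamma + C^{\HH}C/\alpha$. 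The unitary congruence $\diag(I_{3n},Q)$ with $Q=[Q_r,Q_0]$ then exposes $Q^{\HH}C^{\HH}CQ = \diag(\Sigma^2,0)$, while $CQ_0=0$ yields $Q_0^{\HH} D_\gamma Q_0 = 0$ and $Q_r^{\HH} C^{\HH} = \Sigma P_r^{\HH}$ yields $Q_r^{\HH} D_\gamma Q_0 = -\gamma \Sigma U_1$. A final Schur-complement reduction with dominant pivot $\Sigma^2/\alpha$ gives the lower-right block $-\alpha Q_0^{\HH} E_\gamma Q_0 - \alpha \gamma^2 U_1^{\HH} U_1 + O(\alpha^2)$; expanding $Q_0^{\HH} E_\gamma Q_0 = \varepsilon_o U_0^{\HH} U_0 + (\varepsilon_i-\gamma^2)(U_1^{\HH} U_1 + U_2^{\HH} U_2)$ and combining the $U_1^{\HH}U_1$ terms produces precisely the bracketed expression in \cref{eq:inertia_A_B}.

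For $U_2\ne 0$, the hypotheses $n_\ell>4$ and $\bk\ne0$ combined with \cref{lm:Jordan-block,rk:lm:Jordan-block} furnish a nonzero vector $\bw := M e_{\coo{i_1,i_2,i_3}}$ supported in $\domain_i$ for which $[0;\,\bw]$ lies in a $2\times 2$ Jordan block at infinity of $(A_{\gamma_*},B_{\gamma_*})$. The Jordan-chain relation forces $\bw$ to possess a nonzero component along $\nullspace(C^{\HH}) = \range(P_0)$, for otherwise $\bw\in\range(C)$ would collapse the chain to length one. Writing $\bw = Q_r\bz_r + Q_0\bz_0$ with $\bz_0\ne 0$, the identity $(I_3\otimes I^{(i)})\bw = \bw$ yields $U_2\bz_0 = P_0^{\HH}(I_3\otimes I^{(i)})Q_0\bz_0 = P_0^{\HH}\bw \ne 0$, so $U_2\ne 0$.

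Finally, by Sylvester's law and \cref{eq:inertia_A_B}, for $\alpha$ near $0$ one obtains $\inertia_+(A_\gamma-\alpha B_\gamma) = 2n + \inertia_-(K_\gamma)$ at $\alpha=0^+$ and $\inertia_+(A_\gamma-\alpha B_\gamma) = 3n + \inertia_+(K_\gamma)$ at $\alpha=0^-$, where $K_\gamma := \varepsilon_o U_0^{\HH} U_0 + \varepsilon_i U_1^{\HH} U_1 + (\varepsilon_i-\gamma^2) U_2^{\HH} U_2$. Since $U_2\ne 0$ and $\varepsilon_i-\gamma^2<0$ for $\gamma>\gamma_*$, $K_\gamma$ acquires negative eigenvalues as $\gamma$ grows, so $\inertia_\pm(K_\gamma)$ jumps at specific parameter values $\gamma^{\pm}>\gamma_*$; \cref{lm:inertia-and-eigenvalue} then compels an eigenvalue of $(A_\gamma,B_\gamma)$ to cross $\alpha=0$ from above at $\gamma^+$ and from below at $\gamma^-$. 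The main technical hurdle is ensuring the $O(\alpha^2)$ remainder in the final Schur complement does not disturb the leading-order inertia as $\alpha\to 0$; this is handled by a routine perturbation estimate since $\Sigma^2/\alpha$ dominates all other blocks in operator norm.
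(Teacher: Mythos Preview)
Your derivation of the congruence \cref{eq:inertia_A_B} via two Schur complements and the unitary change of basis $Q=[Q_r,Q_0]$ is exactly the paper's route, and your inertia-count for the final crossing statement also matches. The genuine gap is in your argument for $U_2\ne 0$.

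First, the Jordan-chain claim is unjustified: the chain at $\infty$ for $(A_{\gamma_*},B_{\gamma_*})$ reads $B_{\gamma_*}\bx_0=0$, $B_{\gamma_*}\bx_1=A_{\gamma_*}\bx_0$, and with $\bx_0=[0;\bw]$ the second equation amounts to $D_{\gamma_*}\bw\in\range(I_3\otimes I^{(o)})$. This constrains $D_{\gamma_*}\bw$, not the $P$-decomposition of $\bw$; there is no mechanism by which $\bw\in\range(C)=\range(P_r)$ would force the chain to collapse. Second, even granting $P_0^{\HH}\bw\ne 0$, your algebra breaks down: you decompose $\bw=Q_r\bz_r+Q_0\bz_0$ in the $Q$-basis (so $\bz_0=Q_0^{\HH}\bw$, a different object from $P_0^{\HH}\bw$), and then assert $U_2\bz_0=P_0^{\HH}(I_3\otimes I^{(i)})Q_0\bz_0=P_0^{\HH}\bw$. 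But $(I_3\otimes I^{(i)})Q_0\bz_0=(I_3\otimes I^{(i)})(\bw-Q_r\bz_r)=\bw-(I_3\otimes I^{(i)})Q_r\bz_r$, and the cross term $P_0^{\HH}(I_3\otimes I^{(i)})Q_r\bz_r$ has no reason to vanish. So neither $\bz_0\ne 0$ nor $U_2\bz_0\ne 0$ is actually established.

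The paper instead proves $U_2\ne 0$ by a direct explicit computation: it writes $U_2=\Lambda_q^{-1/2}\bigl(\sum_\ell \Lambda_\ell T^{\HH}I^{(i)}T\Lambda_\ell\bigr)\Lambda_q^{-1/2}$, evaluates the quadratic form $(\Lambda_q^{1/2}e_j)^{\HH}U_2(\Lambda_q^{1/2}e_j)$ at the single index $j=\coo{n_1,m_1,\what m_1}$, and shows its real part equals a strictly negative trigonometric sum. That estimate is precisely where the hypotheses $n_\ell>4$ and $\bk\ne 0$ enter (to guarantee $\cos\frac{2\pi\bk\cdot\what\ba_\ell}{n_\ell}>0$ and not all cosines equal $1$). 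Your argument, by contrast, only invokes $n_\ell>2$ via \cref{lm:Jordan-block}, which is a hint that the stronger hypothesis is not being used where it is needed.
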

\begin{proof}
We consider the inertia of $A_\gamma-\alpha B_\gamma$ for a sufficiently small $\alpha$. 
\addtolength\arraycolsep{-5pt}
\begin{align*}
	& A_\gamma-\alpha B_\gamma \\
	=& 
	\begin{bmatrix}
		-\alpha I_{3n}& -\ii C \\ \ii C^{\HH} & -\gamma [(I_3\otimes I^{(i)}) C+ C^{\HH}(I_3\otimes I^{(i)})]-\alpha I_3\otimes [\varepsilon_o I^{(o)}+(\varepsilon_i-\gamma^2)  I^{(i)}]
	\end{bmatrix}
	\\ 
%	\sim&
%	\begin{bmatrix}
%		-\alpha I_{3n}& 0 \\ 0 & -\gamma [(I_3\otimes I^{(i)}) C+ C^{\HH}(I_3\otimes I^{(i)})]-\alpha I_3\otimes [\varepsilon_o I^{(o)}+(\varepsilon_i-\gamma^2)  I^{(i)}] - (\ii C^{\HH})(-\alpha I_{3n})^{-1}(-\ii C)
%	\end{bmatrix}
%	\\
	\sim&
	\begin{bmatrix}
		-\alpha I_{3n}& 0 \\ 0 & -\gamma [(I_3\otimes I^{(i)}) C+ C^{\HH}(I_3\otimes I^{(i)})]-\alpha I_3\otimes [\varepsilon_o I^{(o)}+(\varepsilon_i-\gamma^2)  I^{(i)}] +\frac{1}{\alpha}C^{\HH}C
	\end{bmatrix}
	\\
	:=&
	\begin{bmatrix}
		-\alpha I_{3n}& 0 \\ 0 & \frac{1}{\alpha}C^{\HH}C-D
	\end{bmatrix}
	,
\end{align*}
\addtolength\arraycolsep{5pt}
where $D=\gamma [(I_3\otimes I^{(i)}) C+ C^{\HH}(I_3\otimes I^{(i)})]+\alpha I_3\otimes [\varepsilon_o I^{(o)}+(\varepsilon_i-\gamma^2)  I^{(i)}]$.
Thus, for $\wtd D=\frac{1}{\alpha}C^{\HH}C-D$, we have 
\begin{align*}
	\wtd D
	&\sim
	\begin{bmatrix}
		Q_r^{\HH}\\Q_0^{\HH}
	\end{bmatrix}
	\wtd D
	\begin{bmatrix}
		Q_r&Q_0
	\end{bmatrix}
	=
	\begin{bmatrix}
		Q_r^{\HH}\wtd DQ_r &Q_r^{\HH}\wtd DQ_0\\
		Q_0^{\HH}\wtd DQ_r &Q_0^{\HH}\wtd DQ_0\\
	\end{bmatrix}
	\\&\sim
	\begin{bmatrix}
		Q_r^{\HH}\wtd DQ_r & 0\\
		0 &Q_0^{\HH}\wtd DQ_0-Q_0^{\HH}\wtd DQ_r(Q_r^{\HH}\wtd DQ_r)^{-1}Q_r^{\HH}\wtd DQ_0\\
	\end{bmatrix}
	.
\end{align*}
Let us discuss the terms involved one by one.
\begin{enumerate}
	\item[(a)] The term $Q_r^{\HH}\wtd DQ_r$:
		since
		\[
			Q_r^{\HH}\wtd DQ_r
			= Q_r^{\HH} \left(\frac{1}{\alpha}C^{\HH}C-D\right) Q_r
			= \frac{1}{\alpha} \left( \Sigma^2-\alpha Q_r^{\HH}DQ_r\right)
			,
		\]
		we have
		\begin{align*}
			(Q_r^{\HH}\wtd DQ_r)^{-1}
			&= \alpha \left( \Sigma^2-\alpha Q_r^{\HH}DQ_r \right)^{-1}
			\\&= \alpha \Sigma^{-1} \left( I-\alpha \Sigma^{-1}Q_r^{\HH}DQ_r\Sigma^{-1} \right)^{-1} \Sigma^{-1}
			\\&:= \alpha \Sigma^{-1} \left( I-\alpha D_1 \right)^{-1}\Sigma^{-1}
			\\&= \alpha \Sigma^{-1}\left( I+ \alpha D_1 \left[I-\alpha D_1 \right]^{-1}\right)\Sigma^{-1}
			,
		\end{align*}
		where $D_1 =\Sigma^{-1}Q_r^{\HH}DQ_r\Sigma^{-1}$.
	\item[(b)] The term $Q_0^{\HH}\wtd DQ_0$:
		\begin{align*}
			Q_0^{\HH}\wtd DQ_0
			&= Q_0^{\HH}\left( \frac{1}{\alpha}C^{\HH}C-D \right)Q_0
			= -Q_0^{\HH}DQ_0
			\\&= 
			\begin{multlined}[t]
				-Q_0^{\HH}\Big( \gamma [(I_3\otimes I^{(i)}) C+ C^{\HH}(I_3\otimes I^{(i)})]\\
				+\alpha I_3\otimes [\varepsilon_o I^{(o)}+(\varepsilon_i-\gamma^2)  I^{(i)}] \Big)Q_0
			\end{multlined}
			\\&= -\alpha Q_0^{\HH} I_3\otimes [\varepsilon_o I^{(o)}+(\varepsilon_i-\gamma^2)  I^{(i)}]Q_0
			\\&= -\alpha [\varepsilon_o U_0^{\HH} U_0 + (\varepsilon_i-\gamma^2)(U_1^{\HH} U_1+U_2^{\HH}U_2)  ]
			,
		\end{align*}
		where $U_0$, $U_1$ and $U_2$ are defined in \cref{eq:mtx_Ui}.
	\item[(c)] The term $Q_0^{\HH}\wtd DQ_r$:
		\begin{align*}
			Q_0^{\HH}\wtd DQ_r
			&= Q_0^{\HH}\left( \frac{1}{\alpha}C^{\HH}C-D \right)Q_r
			= -Q_0^{\HH}DQ_r
			\\&= 
			\begin{multlined}[t]
				-Q_0^{\HH}\Big( \gamma [(I_3\otimes I^{(i)}) C+ C^{\HH}(I_3\otimes I^{(i)})]\\
				+\alpha I_3\otimes [\varepsilon_o I^{(o)}+(\varepsilon_i-\gamma^2)  I^{(i)}] \Big)Q_r
			\end{multlined}
			\\&= -\gamma Q_0^{\HH}(I_3\otimes I^{(i)})P_r\Sigma - \alpha Q_0^{\HH}I_3\otimes [\varepsilon_o I^{(o)}+(\varepsilon_i-\gamma^2)  I^{(i)}]Q_r 
			\\&= -\gamma U_1^{\HH}\Sigma - \alpha Q_0^{\HH}I_3\otimes [\varepsilon_o I^{(o)} - (\varepsilon_i-\gamma^2)  (I_n-I^{(o)})]Q_r 
			\\&= -\gamma U_1^{\HH}\Sigma - \alpha (\varepsilon_o - \varepsilon_i+\gamma^2)U_0^{\HH}Q_r - \alpha (\varepsilon_i-\gamma^2)Q_0^{\HH}Q_r
			\\&= -\gamma U_1^{\HH}\Sigma - \alpha (\varepsilon_o - \varepsilon_i+\gamma^2)U_0^{\HH}Q_r
			.
		\end{align*}
\end{enumerate}
Thus,
\begin{align*}
	& Q_0^{\HH}\wtd DQ_0-Q_0^{\HH}\wtd DQ_r (Q_r^{\HH}\wtd DQ_r)^{-1} Q_r^{\HH}\wtd DQ_0
	\\&= 
	\begin{multlined}[t]
		-\alpha [\varepsilon_o U_0^{\HH} U_0 + (\varepsilon_i-\gamma^2)(U_1^{\HH} U_1+U_2^{\HH} U_2)  ] \\
		-[-\alpha (\varepsilon_o - \varepsilon_i+\gamma^2)U_0^{\HH}Q_r-\gamma U_1^{\HH}\Sigma]
		\alpha \Sigma^{-1}\left( I+ \alpha D_1 \left[I-\alpha D_1\right]^{-1}\right)\Sigma^{-1} \\
		\times [-\alpha (\varepsilon_o - \varepsilon_i+\gamma^2)U_0^{\HH}Q_r-\gamma U_1^{\HH}\Sigma]^{\HH}
	\end{multlined}
	\\&=
		-\alpha [\varepsilon_o U_0^{\HH} U_0 + (\varepsilon_i-\gamma^2)(U_1^{\HH} U_1+U_2^{\HH}U_2)  ]
		- (-\gamma U_1^{\HH}\Sigma)
		\alpha \Sigma^{-2}%\Sigma^{-1}\Sigma^{-1}
		(-\gamma U_1^{\HH}\Sigma)^{\HH}
		+\OO(\alpha^2)
	\\&=
		-\alpha [\varepsilon_o U_0^{\HH} U_0 + (\varepsilon_i-\gamma^2)(U_1^{\HH} U_1+U_2^{\HH}U_2)  ]
		-\alpha\gamma^2 U_1^{\HH} U_1
		+\OO(\alpha^2)
	\\&:=
	-\alpha
	\left[ 
	\varepsilon_o U_0^{\HH} U_0 +\varepsilon_i U_1^{\HH} U_1 
	+(\varepsilon_i-\gamma^2) U_2^{\HH} U_2
	\right]
	+\OO(\alpha^2).
\end{align*}
To summarize, for a sufficiently small $\alpha$,
\begin{align*}
	& A_\gamma-\alpha B_\gamma \\*
	\sim &
	\begin{bmatrix}
		-\alpha I_{3n} & & \\
		& \frac{1}{\alpha}\Sigma^2 - Q_r^{\HH}DQ_r & \\
		&& 
	-\alpha
	\left[ 
	\varepsilon_o U_0^{\HH} U_0 +\varepsilon_i U_1^{\HH} U_1
	+(\varepsilon_i-\gamma^2) U_2^{\HH}U_2
	\right]
	+\OO(\alpha^2)
	\end{bmatrix}
	\\*
	\approx&
	\begin{bmatrix}
		-\alpha I_{3n} & & \\
		& \frac{1}{\alpha}\Sigma^2 & \\
		&& 
	-\alpha
	\left[ 
	\varepsilon_o U_0^{\HH}U_0 +\varepsilon_i U_1^{\HH} U_1 
	+(\varepsilon_i-\gamma^2) U_2^{\HH}U_2
	\right]
	\end{bmatrix}
	\qquad \text{as $\alpha \to 0$}
	.
\end{align*}

Now, we prove that $U_2\ne0$.
Note that by \cref{lm:C:SVD}
\[
	Q_0=
	\begin{bmatrix}
		T\Lambda_1 \\ T\Lambda_2 \\ T\Lambda_3
	\end{bmatrix}
	\Lambda_q^{-1/2},
	\qquad
	P_0=
	\begin{bmatrix}
		T\Lambda_1^{\HH} \\ T\Lambda_2^{\HH} \\ T\Lambda_3^{\HH}
	\end{bmatrix}
	\Lambda_q^{-1/2}.
\]
Thus,
\begin{align*}
	U_2
	& =P_0^{\HH}(I_3\otimes I^{(i)})Q_0 \\
	&=\Lambda_q^{-1/2}\left( \Lambda_1 T^{\HH}I^{(i)}T\Lambda_1 +\Lambda_2 T^{\HH}I^{(i)}T\Lambda_2+\Lambda_3 T^{\HH}I^{(i)}T\Lambda_3 \right)\Lambda_q^{-1/2}.
\end{align*}
Consider $\Lambda_q^{1/2}e_{\coo{n_1,m_1,\what m_1}} := \Lambda_q^{1/2}e_j$.
Then,
\begin{align*}
	& (\Lambda_q^{1/2}e_j)^{\HH} U_2\Lambda_q^{1/2}e_j \\
	=& \left( \delta_1^{-2} (\eta_{n_1}^{\bk\cdot\what\ba_1}-1)^2  + \delta_2^{-2} (\eta_{n_2}^{\bk\cdot\what\ba_2}-1)^2 + \delta_3^{-2} (\eta_{n_3}^{\bk\cdot\what\ba_3}-1)^2 \right)e_j^{\HH} T^{\HH}I^{(i)}Te_j
	.
\end{align*}
Since $Te_j=t_j$ of which each entry is nonzero by \cref{lm:C:simul-diag-u}, $t_j^{\HH}I^{(i)}t_j>0$.
Note that
\begin{align*}
	\Re\left( \sum_{\ell=1}^{3} \delta_\ell^{-2} (\eta_{n_\ell}^{\bk\cdot\what\ba_\ell}-1)^2 \right)
	&=\Re\left( \sum_{\ell=1}^{3} \delta_\ell^{-2} (1-2\eta_{n_\ell}^{\bk\cdot\what\ba_\ell}+\eta_{n_\ell}^{2\bk\cdot\what\ba_\ell}) \right)
	\\&= \sum_{\ell=1}^{3} \delta_\ell^{-2} \left( 1-2\cos\frac{2\pi\bk\cdot\what\ba_\ell}{n_\ell}+\cos2\frac{2\pi\bk\cdot\what\ba_\ell}{n_\ell} \right)
	\\&= -2\sum_{\ell=1}^{3} \delta_\ell^{-2}\cos\frac{2\pi\bk\cdot\what\ba_\ell}{n_\ell}(1-\cos\frac{2\pi\bk\cdot\what\ba_\ell}{n_\ell})
	<0
	,
\end{align*}
in which the inequality holds because
by \cref{lm:C:simul-diag-u} and \cref{eq:k.a},
$\abs{\bk\cdot\what\ba_\ell}\le1$,
%$\abs{\bk\cdot\what\ba_1}\le1,\abs{\bk\cdot\what\ba_2}\le2,\abs{\bk\cdot\what\ba_3}\le4$,
and when $n_\ell>4$, $\cos\frac{2\pi\bk\cdot\what\ba_\ell}{n_\ell}>0$;
it is impossible that $\cos\frac{2\pi\bk\cdot\what\ba_1}{n_1}=\cos\frac{2\pi\bk\cdot\what\ba_2}{n_2}=\cos\frac{2\pi\bk\cdot\what\ba_3}{n_3}=1$, contradicting $\bk\ne0$.
As a result, $U_2\ne0$.

Finally, \cref{lm:inertia-and-eigenvalue} shows that $\alpha$ becomes an eigenvalue of the matrix pair $(A_{\gamma_*}, B_{\gamma_*})$ once the indices of the inertia of $A_{\gamma} - \alpha B_{\gamma}$ change by one whenever $\gamma$ runs over $\gamma_*$ increasingly or decreasingly. 
%Editor: Please ensure that the intended meaning has been maintained in the edits in the previous sentence.
From \cref{eq:inertia_A_B}, it follows that for $\alpha = 0^{-}$ or $0^{+}$, some diagonal entry of $C_{\gamma}(\alpha)$ must change sign as $\gamma$ increases. Therefore, there exists $\gamma = \gamma^{-} > \gamma_{*}$ and $\gamma = \gamma^{+} > \gamma_{*}$ such that $\alpha = 0^{-}$ and $0^{+}$ are eigenvalues of $(A_{\gamma}, B_{\gamma})$, respectively. 
\end{proof}

\begin{theorem}
      Suppose $\bk \neq 0$ and $n_{\ell} > 4$. The number of positive/negative eigenvalues of $(A_\gamma, B_\gamma)$ increases as $\gamma> \gamma_{*} \equiv \sqrt{\varepsilon_i}$ becomes larger;
	the new positive eigenvalue $(\mu = -1)$ and the new negative eigenvalue $(\mu = 1)$ associated with Jordan blocks of odd size appear in pairs, and they are initiated by a pair of complex conjugate eigenvalues.  
	Moreover, this kind of pair can appear as many as $\rank(\ol{\Pi_0}^{\HH}(I_3\otimes (T^{\HH}I^{(i)}T))\Pi_0)$ times.
\end{theorem}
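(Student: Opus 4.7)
The plan is to weld together the three main tools developed in the preceding subsections: the small-$\alpha$ congruence reduction of $A_\gamma-\alpha B_\gamma$ from \cref{lm:nonreal-eigenvalue-burst}, the inertia-versus-real-eigenvalue correspondence of \cref{lm:inertia-and-eigenvalue}, and the local bifurcation canonical form of \cref{thm:bifurcation_ew}. The idea is to fix a small real parameter $\alpha$, track how the inertia of $A_\gamma-\alpha B_\gamma$ depends on $\gamma$, and show that every jump of this inertia near $\alpha=0$ forces the creation of a real eigenvalue pair of the asserted sign characteristic, while \cref{lm:necessary-cond-of-nonreal-eigenvalue} together with \cref{thm:bifurcation_ew} identifies this pair as the offspring of a colliding complex conjugate pair born out of the $2\times2$ Jordan block at infinity.

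First I would invoke \cref{lm:nonreal-eigenvalue-burst} to get, as $\alpha\to0$, the congruence
\[
A_\gamma-\alpha B_\gamma\;\sim\;\diag\!\Bigl(-\alpha I_{3n},\;\tfrac{1}{\alpha}\Sigma^2,\;-\alpha M(\gamma)\Bigr),\qquad M(\gamma):=\varepsilon_o U_0^{\HH}U_0+\varepsilon_i U_1^{\HH}U_1+(\varepsilon_i-\gamma^2)U_2^{\HH}U_2.
\]
The inertias of the first two blocks do not depend on $\gamma$, so every variation of $\inertia_\pm(A_\gamma-\alpha B_\gamma)$ for $\alpha\to0^\pm$ is driven by $M(\gamma)$. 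At $\gamma=\gamma_*$ the matrix $M(\gamma_*)=\varepsilon_o U_0^{\HH}U_0+\varepsilon_i U_1^{\HH}U_1$ is positive semidefinite, whereas for $\gamma>\gamma_*$ the term $(\varepsilon_i-\gamma^2)U_2^{\HH}U_2$ becomes ever more negative semidefinite. By Weyl monotonicity each eigenvalue of $M(\gamma)$ is nonincreasing in $\gamma$, and the total number of eigenvalues that migrate across zero from positive to negative as $\gamma$ sweeps from $\gamma_*$ to $+\infty$ equals, in the limit, the number of negative eigenvalues of $-\gamma^2 U_2^{\HH}U_2$, which is $\rank(U_2^{\HH}U_2)=\rank(U_2)$.

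Next, at each crossing value $\gamma_k>\gamma_*$ the third block acquires a nontrivial kernel, so $\alpha=0$ is an eigenvalue of $(A_\gamma,B_\gamma)$ at $\gamma=\gamma_k$; moreover $\inertia_+$ and $\inertia_-$ of $A_\gamma-\alpha B_\gamma$ jump by $\pm1$ in opposite directions as $\gamma$ crosses $\gamma_k$, and the direction of the jump flips when one switches from $\alpha=0^+$ to $\alpha=0^-$. Applying \cref{lm:inertia-and-eigenvalue} to both sides of $\alpha=0$ then produces exactly one new positive eigenvalue with sign characteristic $\mu=-1$ together with one new negative eigenvalue with sign characteristic $\mu=+1$. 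On the other hand, \cref{lm:eigenvalue-infty} together with \cref{lm:necessary-cond-of-nonreal-eigenvalue} shows that purely imaginary eigenvalues are born out of the infinite $2\times2$ Jordan blocks as $\gamma\searrow\gamma_*^+$; \cref{thm:bifurcation_ew} then identifies the only canonical route compatible with the above inertia jump and sign-characteristic assignment as the sequence ``complex conjugate pair $\to$ real double eigenvalue with a $2\times2$ Jordan block $\to$ two real simple eigenvalues of opposite type.'' This matches the picture claimed in the statement.

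Finally, to convert the upper bound into the advertised form, I would use $Q_0=(I_3\otimes T)\Pi_0$ and $P_0=(I_3\otimes T)\ol{\Pi_0}$ from \cref{lm:C:SVD} to rewrite
\[
U_2=P_0^{\HH}(I_3\otimes I^{(i)})Q_0=\ol{\Pi_0}^{\HH}\bigl(I_3\otimes (T^{\HH}I^{(i)}T)\bigr)\Pi_0,
\]
so that the cap on the number of creation events is $\rank(U_2)=\rank\bigl(\ol{\Pi_0}^{\HH}(I_3\otimes(T^{\HH}I^{(i)}T))\Pi_0\bigr)$. The main obstacle I expect is the bookkeeping of sign characteristics at each crossing: one must rule out that the inertia jump at some $\gamma_k$ is caused by two pre-existing real eigenvalues of opposite type swapping, rather than by a genuine conjugate-pair collision followed by a bifurcation. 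This is exactly the point at which \cref{lm:necessary-cond-of-nonreal-eigenvalue}, which forces purely imaginary eigenvalues just above $\gamma_*$, must be combined with the local normal form of \cref{thm:bifurcation_ew} to ensure that every inertia jump in the reduced third block is tracked by a bona fide $\typ{c}\to\typ{re}\to\typ{ro}(\mu=1)+\typ{ro}(\mu=-1)$ event.
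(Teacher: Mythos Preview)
Your plan is essentially the paper's own strategy: combine the small-$\alpha$ congruence of \cref{lm:nonreal-eigenvalue-burst} with the inertia--eigenvalue dictionary of \cref{lm:inertia-and-eigenvalue} and the local normal form of \cref{thm:bifurcation_ew}, then read off the count from $\rank(U_2)$ via $U_2=\ol{\Pi_0}^{\HH}(I_3\otimes(T^{\HH}I^{(i)}T))\Pi_0$. Your Weyl-monotonicity description of how the eigenvalues of $M(\gamma)$ drift across zero is a tidy way to phrase what the paper expresses more informally.

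The one piece you are missing in the ``bookkeeping'' step you flag at the end is \cref{thm:derivative_omega}. The paper uses it to pin down that, as $\gamma\nearrow\gamma_*$, the original smallest positive eigenvalue $\alpha_1^{(+)}(\gamma)$ moves to the right and the largest negative one $\alpha_1^{(-)}(\gamma)$ moves to the left; this is what guarantees that the collision point $\alpha(\gamma_1)$ of the conjugate pair lies strictly between $\alpha_1^{(-)}$ and $\alpha_1^{(+)}$, so the new real eigenvalues near $0$ cannot be explained by pre-existing real eigenvalues wandering in. With that fact in hand, the paper then runs a short contradiction argument (its scenario (iii)--(v)) to exclude a later re-collision of the newly created $\typ{ro}(\mu=-1)$ branch with another $\typ{ro}(\mu=1)$ branch, using that such a re-merger would force $\inertia_+(C_{\gamma}(0^-))$ to increase, contrary to the monotone drift of $M(\gamma)$. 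Your appeal to \cref{lm:necessary-cond-of-nonreal-eigenvalue} and \cref{thm:bifurcation_ew} alone does not by itself exclude this swap; you need \cref{thm:derivative_omega} (for the location of the original eigenvalues) together with the inertia monotonicity of $-\alpha M(\gamma)$ to close the argument, exactly as the paper does.
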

\begin{proof}
      We first quote the following important results which have been proven above.
      \begin{enumerate}
            \item[(a)] Let $\alpha_1^{(-)}(\gamma)$ and $\alpha_1^{(+)}(\gamma)$ be the largest negative and smallest positive real eigenvalues of $(A_{\gamma}, B_{\gamma})$, for $\gamma < \gamma_{*}$. From \cref{thm:derivative_omega}, it holds that $\frac{d \alpha_1^{(-)}(\gamma)}{d \gamma} < 0$ and $\frac{d \alpha_1^{(+)}(\gamma)}{d \gamma} > 0$ generically, i.e., $\alpha_1^{(-)}$  and $\alpha_1^{(+)}$ move toward the left and the right, respectively, when $\gamma$ increases.
            
            \item[(b)] \cref{lm:eigenvalue-infty,lm:necessary-cond-of-nonreal-eigenvalue}, respectively, show that $(A_{\gamma_{*}}, B_{\gamma_{*}})$  have defective infinite eigenvalues and $\frac{d | \omega(\gamma)|}{d\gamma} < 0$, where $\ii \omega(\gamma)$ is a purely imaginary eigenvalue of $(A_{\gamma}, B_{\gamma})$, for $\gamma \to \gamma_{*}^{+}$. \cref{thm:bifurcation_ew} shows that the tangent line of the complex conjugate eigenvalue curves $\beta(\gamma) \cup \bar{\beta}(\gamma)$ is orthogonal to the real axis at some $\gamma = \gamma_1$ and bifurcate into two real eigenvalues $\alpha_{\ell}(\gamma_1^{+}) (\mu = 1)$ and $\alpha_{r}(\gamma_1^{+}) (\mu = -1)$.
      \end{enumerate}
      
From \cref{lm:nonreal-eigenvalue-burst}, we have that $\alpha = 0^{-}$ is an eigenvalue of $(A_{\gamma_2^{-}}, B_{\gamma_2^{-}})$ with $\gamma_2^{-} > \gamma_{*}$. From the continuity of eigenvalue curves and bifurcation theory, $\alpha = 0^{-}$ must have the following combination of cases listed in \cref{lm:inertia-and-eigenvalue}.
%Editor: Please ensure that the edit made the previous sentence retains the intended meaning, as it was unclear what "it" referred to in the original sentence.
\begin{align*}
     \typ{c} \to \typ{re}  \to \typ{ro}(\mu=1) + \typ{ro}(\mu=-1).
\end{align*}

\begin{figure}[ht]
	\centering
	\includegraphics[height=2.8in]{./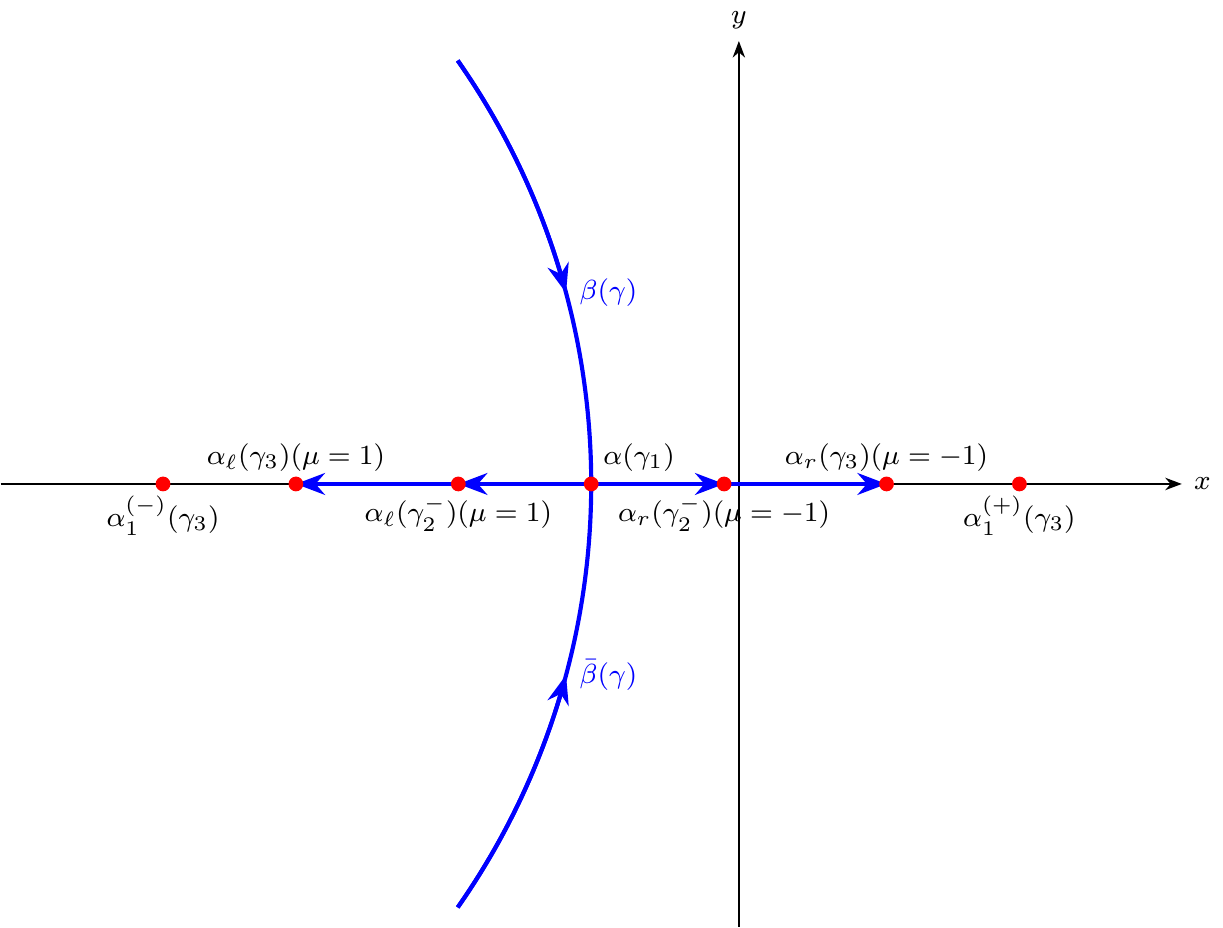}
	\caption{Scenario of bifurcation.}
	\label{fig:scenario}
\end{figure}

Scenario (see \cref{fig:scenario} for details):
%Editor: Please ensure that the intended meaning has been maintained in the edits in the previous sentence.
\begin{enumerate}
      \item[(i)] Since $\alpha = 0^{-}$ is an eigenvalue of $(A_{\gamma_2^{-}}, B_{\gamma_2^{-}})$  and from the facts of (a) and (b), there is a complex conjugate eigenvalue pair $\{ \beta(\gamma), \bar{\beta}(\gamma) \}$ of $(A_{\gamma}, B_{\gamma})$ for $\gamma_{*} < \gamma < \gamma_1$ such that they collide at $\gamma = \gamma_1$ with $\alpha_1^{(-)}(\gamma_1) < \alpha(\gamma_1) = \beta(\gamma_1) = \bar{\beta}(\gamma_1) <0$ and bifurcate into two real eigenvalues $\alpha_{\ell}(\gamma_1^{+}) (\mu = 1)$ and $\alpha_r(\gamma_1^{+}) (\mu = -1)$  with $\alpha_{\ell}(\gamma_1^{+})$ and $\alpha_r(\gamma_1^{+})$ moving toward the left and the right, respectively.
      
      \item[(ii)] Furthermore, there exist $\gamma_1 < \gamma_2^{\pm} < \gamma_3$ such that 
      \begin{equation}
           \alpha_{\ell}(\gamma_2^{\pm}) (\mu = 1) < \alpha(\gamma_1) < \alpha_{r}(\gamma_2^{\pm}) (\mu = -1) \equiv 0^{\pm} \label{eq:Scenario_1}
      \end{equation}
      and
      \begin{equation}
           0 < \alpha_{r}(\gamma_3) < \alpha_1^{(+)}(\gamma_3) \label{eq:Scenario_2}
      \end{equation}
      which implies that a new smallest positive eigenvalue $\alpha_r(\gamma_3)$ is created at $\gamma = \gamma_3$.

	  We now show the following cases cannot happen.
      
      \item[(iii)] If there is a complex conjugate eigenvalue pair $\{ \hat{\beta}(\gamma), \bar{\hat{\beta}}(\gamma) \}$ of $(A_{\gamma}, B_{\gamma})$ that collides at $\gamma = \gamma_4>\gamma_3$ with 
      \begin{equation}
             0 < \alpha_{r}(\gamma_3) < \alpha_{r}(\gamma_4) < \hat{\alpha}_{\ell}(\gamma_4^{+})\lesssim \hat{\alpha}(\gamma_4) = \hat{\beta}(\gamma_4) = \bar{\hat{\beta}}(\gamma_4) \lesssim \hat{\alpha}_r(\gamma_4^{+}), \label{eq:Scenario_3}
      \end{equation}
      
      \item[(iv)] then, $\alpha_r(\gamma_4) (\mu = -1)$ and $\hat{\alpha}_{\ell}(\gamma_4^{+})$ will collide at $\alpha_r(\gamma_5)$ for $\gamma_4 \to \gamma_5$ increasingly, as the combination below.
      \begin{multline}
             \gamma = \gamma_5^{-}: \typ{ro} (\mu=1) + \typ{ro}(\mu=-1) 
			 \\\to \gamma = \gamma_5:  \typ{re}(\mu=-1) \to \gamma = \gamma_5^{+}: \typ{c}.
     \end{multline}
     
     \item[(v)] Then, from \cref{lm:nonreal-eigenvalue-burst}, it follows that $p_{+}(C_{\gamma_5^{-}}(0^{-})) < p_{+}(C_{\gamma_5^{+}}(0^{-}))$, which is a contradiction. 
 \end{enumerate}
     
     From \cref{eq:inertia_A_B}, \cref{lm:nonreal-eigenvalue-burst} and (ii), it follows that
     \[
          p_{-}(C_{\gamma_2^{-}}(0^{-})) < p_{-}(C_{\gamma_2^{+}}(0^{-})) 
     \]
     and
     \[
         p_{+}(C_{\gamma_2^{-}}(0^{+})) < p_{+}(C_{\gamma_2^{+}}(0^{+})).
     \]
     This implies that the new positive and negative eigenvalues $\alpha_r(\gamma_3)$ and $\alpha_{\ell}(\gamma_3)$ must be of negative $(\mu = -1)$ and positive $(\mu = 1)$ types, respectively.
     
     On the other hand, another scenario in which the pair of eigenvalue curves $\{ \beta(\gamma), \bar{\beta}(\gamma) \}$ for $\gamma_{*} < \gamma < \gamma_1$ collides at $\gamma = \gamma_1$ with
     \begin{align*}
          0 < \alpha(\gamma_1) = \beta(\gamma_1) = \bar{\beta}(\gamma_1) < \alpha_1^{(+)}(\gamma_1)
     \end{align*}
	 can also happen (see \cref{fig:4th_bifurcation} %\cref{fig:Tow_bifurcation}\tr{(b)}
	 in \cref{sec:numer_reslt}). A similar discussion as in (i)-(v) above is still held by replacing $0^{-}$ by $0^{+}$ and considering all combinations symmetric to the purely imaginary axis. These two scenarios should be mutually exclusive. 

    Finally, the kind of pairs in (i) and (ii) can appear as many as 
    $$\rank(U_2) = \rank( \ol{\Pi_0}^{\HH} (I_3\otimes (T^{\HH}I^{(i)}T)) \Pi_0)$$ 
    times because the matrix $(\varepsilon_i - \gamma^2) U_2^{\HH} U_2$ in  \cref{eq:inertia_A_B} makes $B_{\gamma}$ change signs $\rank(U_2)$ times, as $\gamma \to \infty$. 
\end{proof}

\section{Numerical results} \label{sec:numer_reslt}

% ======================================================================
\begin{figure}[ht]
	\centering
	\subfigure[Schema \label{fig:schema}]{\includegraphics[height=1.75in]{./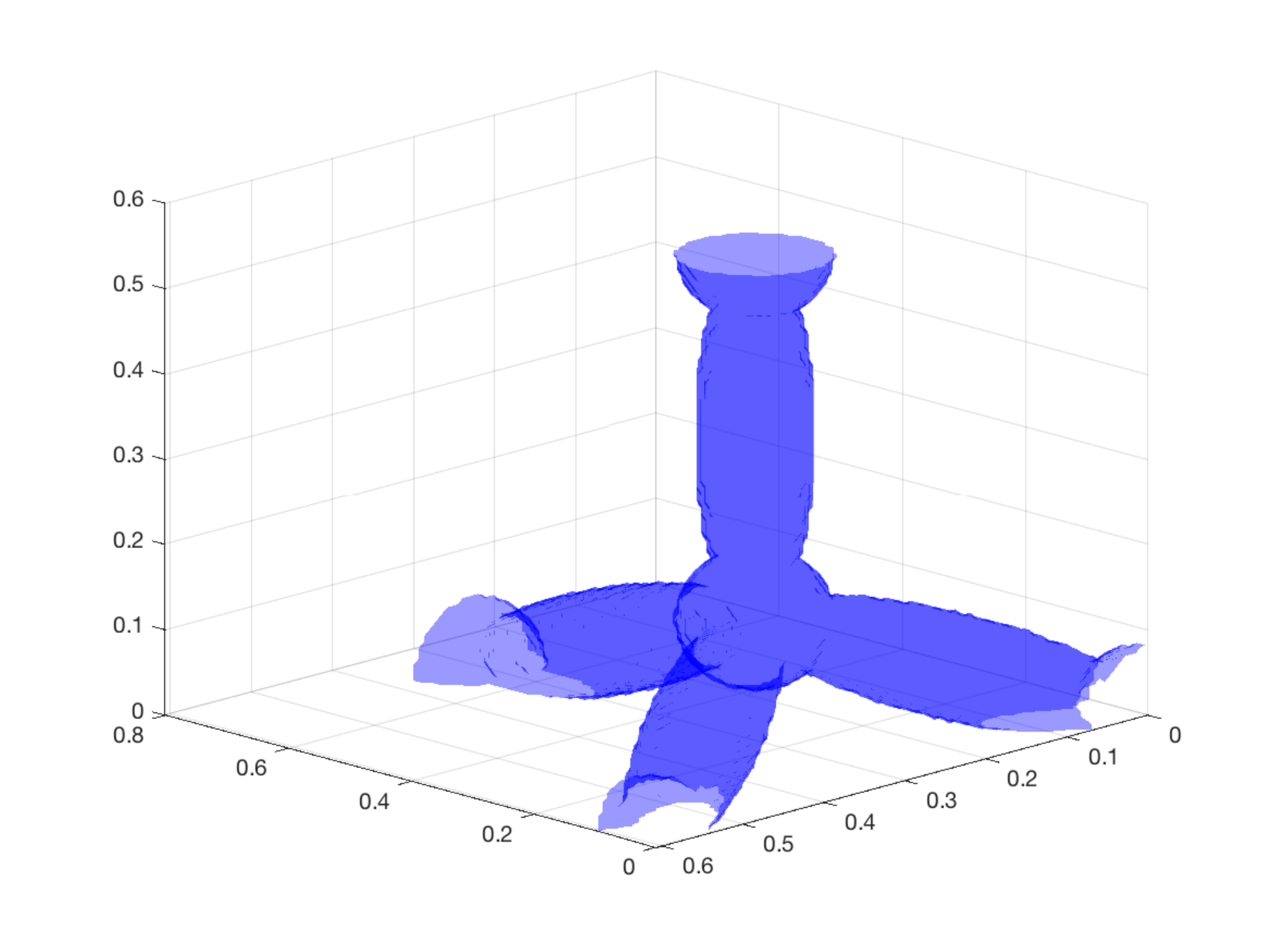}}
	\subfigure[Eigencurve-structure vs. various $\gamma$ \label{fig:eigen-structure}]{\includegraphics[height=1.75in]{./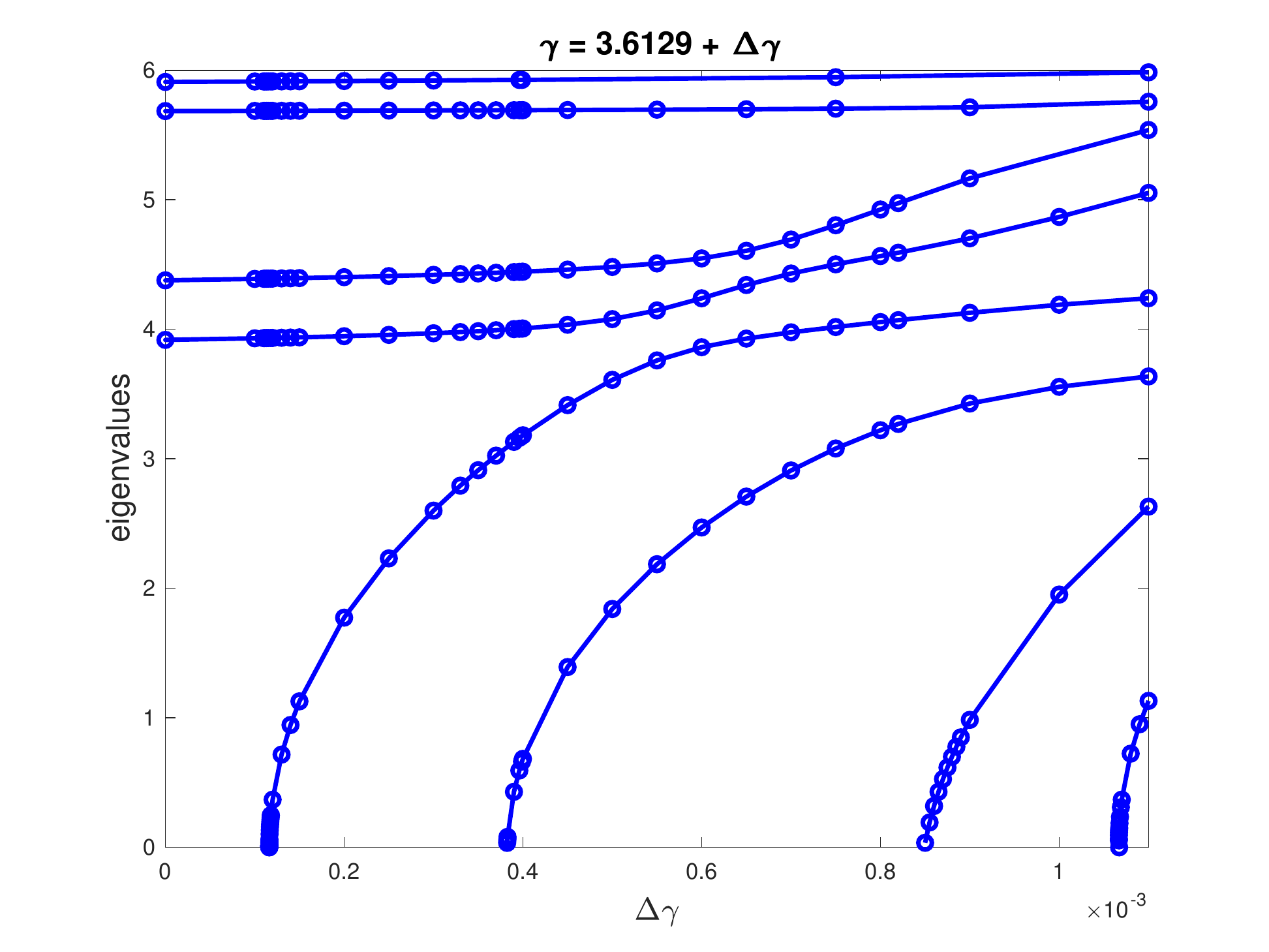}}
	\caption{A schema of 3D complex media with the FCC lattice and eigencurve-structure vs. various $\gamma$} %Conjugate eigenvalue pair and eigencurve-structure with $\mathbf{k} = \frac{13}{14}X + \frac{1}{14}U.$}
	\label{fig:schema_eigencurve_gamma}
\end{figure}
 %====================================================================

To study numerical behaviors of the complex conjugate eigenvalue curves with $\gamma > \gamma_{*} \equiv \sqrt{\varepsilon_i}$,
we consider the FCC lattice \cite{hulcl:2019} which
consists of
dielectric spheres with connecting spheroids, as is shown in \cref{fig:schema}. %\cref{fig:schema_eigencurve_gamma}\tr{(a)}.
%Editor: Abbreviations and acronyms are often defined the first time they are used within the abstract and again in the main text and then used throughout the remainder of the manuscript. Please consider adhering to this convention. The target journal may have a list of abbreviations that are considered common enough that they do not need to be defined. 
 The mesh numbers $n_1$, $n_2$ and $n_3$ are taken as $n_1 = n_2 = n_3 = 96$, and the matrix dimension of $\widehat{A}_r$ in \cref{eq3.17} is 3,538,944. Here, $\varepsilon_i = 13$.
 
 As shown by the numerical results shown in \cite[Figure 4]{hulcl:2019}, there are four newly created smallest energies as $\gamma$ increases from $\sqrt{\varepsilon_i} \approx 3.6056$ to $3.61397$. The zoom-in view of the eigencurve-structure is shown in \cref{fig:eigen-structure}. %\cref{fig:schema_eigencurve_gamma}\tr{(b)}.
 The results demonstrate that four newly created smallest energies are produced on the tiny increment $\Delta \gamma = 10^{-3}$ of $\gamma$. These energies emerge from lower frequencies and push the original eigenmodes to higher frequencies. These new smallest eigenvalues do not collide with the original eigenvalues so no bifurcation occurs again between these eigenvalues. 

In \cref{fig:Tow_bifurcation}, we demonstrate the local behavior of the complex conjugate eigenvalue curves which collide and bifurcate into two real eigenvalues at $\gamma = \gamma_1 \approx 3.6130162$ and $\gamma_4 \approx 3.61396676$. The results show that
the tangent lines of $\beta(\gamma)\cup \bar{\beta}(\gamma)$ and $\alpha(\gamma) \equiv \alpha_{\ell}(\gamma) \cup \alpha_r(\gamma)$ at $\gamma = \gamma_1$ and $\gamma_4$ are orthogonal to the real $x$- and imaginary $y$-axes, respectively, as the proof of \cref{thm:bifurcation_ew}. Moreover, the complex conjugate eigenvalue curves collide and bifurcate at $\alpha(\gamma_1) \approx -1.194\times 10^{-2}$ and $\alpha(\gamma_4) \approx 1.693 \times 10^{-2}$, respectively. The negative and positive eigenvalues $\alpha(\gamma_1)$ and $\alpha(\gamma_4)$ instantly move toward the right and the left, respectively, to a new positive eigenvalue $\alpha_r(\gamma_1 + \Delta \gamma_1)$ and a negative eigenvalue $\alpha_{\ell}(\gamma_4 + \Delta \gamma_4)$ along the real axis, where $\Delta \gamma_1= 6 \times 10^{-9}$ and $\Delta \gamma_4 = 2 \times 10^{-8}$.  

% ======================================================================
\begin{figure}[ht]
	\centering
	\subfigure[The first newly created smallest energy \label{fig:1st_bifurcation}]{\includegraphics[height=1.85in]{./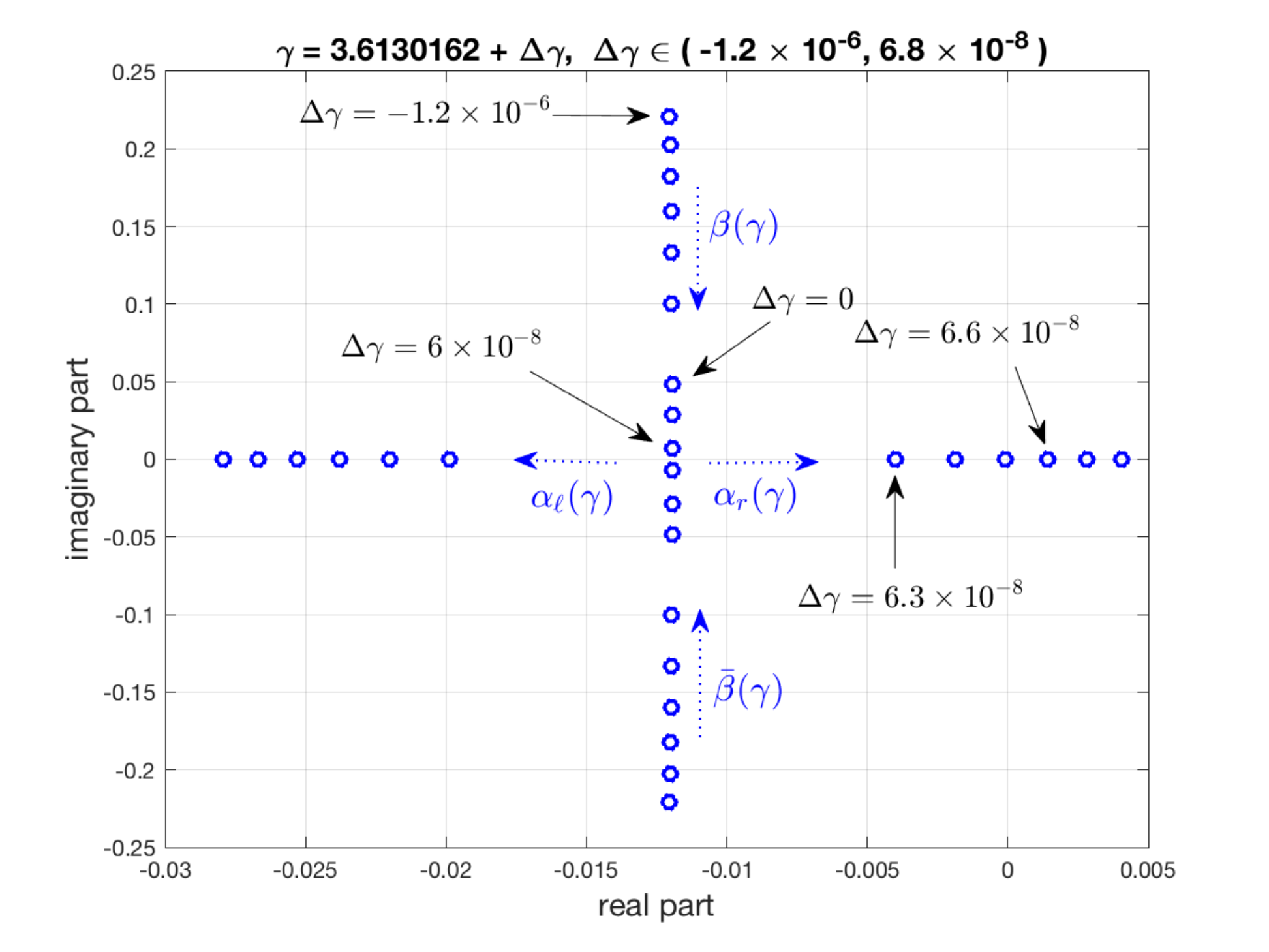}}
	\subfigure[The fourth newly created smallest energy \label{fig:4th_bifurcation}]{\includegraphics[height=1.85in]{./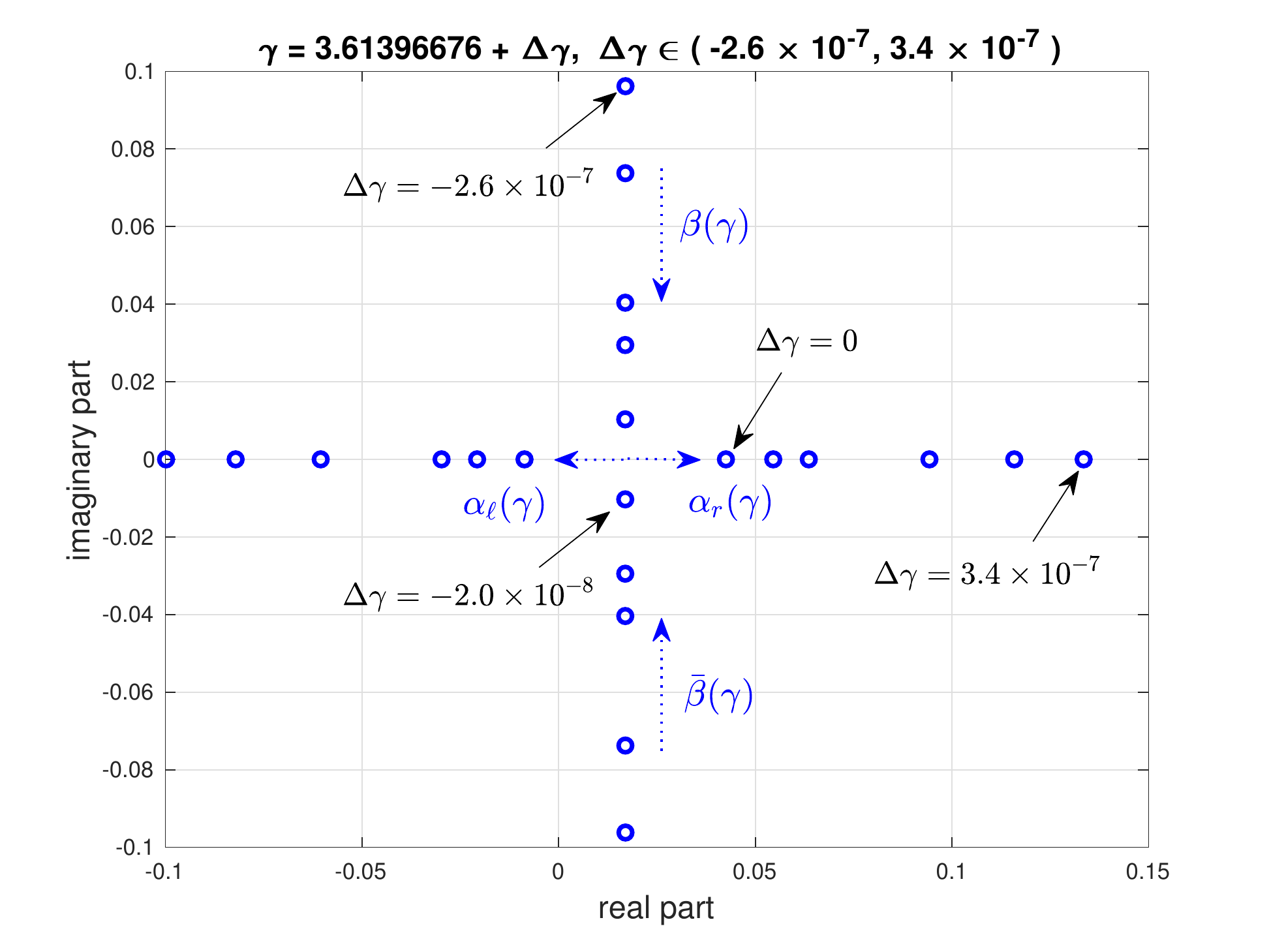}}
	\caption{The local behavior of the complex conjugate eigenvalue curves which collide and bifurcate into two real eigenvalues at $\gamma \approx 3.6130162$ and $3.61396676$.} %Conjugate eigenvalue pair and eigencurve-structure with $\mathbf{k} = \frac{13}{14}X + \frac{1}{14}U.$}
	\label{fig:Tow_bifurcation}
\end{figure}
 %====================================================================

\section{Conclusions}\label{sec:conclusion}
In this paper, we prove a detailed bifurcation analysis of eigenstructures of the discrete single-curl operator in 3D Maxwell's equations with Pasteur media that depend on a chirality parameter $\gamma$ as it varies. We compensate for the theoretical difficulties and guarantee that the numerical results are valid and reliable. These results can provide an important theoretical viewpoint on numerical computations, especially regarding the support of numerical results in \cite{hulcl:2019} computed by the developed SIRA + MINRES for NFGEP. It is worth mentioning that in \cref{rk:lm:Jordan-block}, we show that the associated electric field $\be$ of the defective infinite eigenvalue is zero outside the material. This provides a very good reason to explain that the electric field corresponding to the newly created smallest energy state is almost concentrated in the material such that only a small amount of the field leak into the background material.

In the future, it would be very challenging to compute the Bloch dispersion curves corresponding to a periodic array of plasmonic nanoparticles inside a chiral background medium. 

\appendix
\section{The regularity of \texorpdfstring{$A - \omega B$}{A - omega B}}

\begin{theorem}\label{lm:regularity}
	$A-\omega B$ is always regular,
	as long as three line segments, parallel to the three mesh grid axes respectively, with end points lying on the boundary of the mesh grid are outside the medium, 
	i.e., there exist some $i_1,i_1'\in[1,n_1],i_2,i_2'\in[1,n_2],i_3,i_3'\in[1,n_3]$, such that $\calG\subset\domain_o$, where
	$\calG=\calG_1\cup\calG_2\cup\calG_3$ with
	$\calG_1=\set{\coo{i,i_2,i_3'}:  i\in\mathbb{Z}}$,
	$\calG_2=\set{\coo{i_1',i,i_3}:  i\in\mathbb{Z}}$,
	$\calG_3=\set{\coo{i_1,i_2',i}:  i\in\mathbb{Z}}$.
\end{theorem}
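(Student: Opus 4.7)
My plan is to verify the equivalent criterion of \cref{lm:regularity-equiv-cond}, namely that $\calS(\bz_1,\bz_2,\bz_3)=\set{0}$ for every admissible triple. The idea is to transport the defining conditions of $\calS$ from the Fourier side (where $\Lambda_\ell$ are diagonal) to the spatial side (where $C_\ell$ are phase-shifted nearest-neighbor differences) via $\by=T\bx_1$, $\bs{w}_\ell=T\bz_\ell$, and then use the three outside rings $\calG_\ell\subset\domain_o$ to force $\by=0$.

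The translation step proceeds as follows. Using $T(I+\delta_\ell\Lambda_\ell)T^{\HH}=I+\delta_\ell C_\ell$, the equal-string condition \cref{eq:constraint:x1:1} reduces to the statement that $\delta_\ell C_\ell\by-\bs{w}_\ell$ is independent of $\ell$; denote this common value by $\wtd{\bs{v}}$, so that $\bs{w}_\ell=\delta_\ell C_\ell\by-\wtd{\bs{v}}$. The null-space conditions on $\bz_\ell$ translate to $C_\ell^{\HH}\bs{w}_\ell=0$. Writing $C_\ell=\delta_\ell^{-1}(S_\ell-I)$ with $S_\ell$ unitary yields the scalar identity $\delta_\ell C_\ell^{\HH}C_\ell=-(C_\ell+C_\ell^{\HH})$, and substituting into $C_\ell^{\HH}\bs{w}_\ell=0$ with $\wtd{\bs{u}}:=\by+\wtd{\bs{v}}$ collapses the whole system into
\[
C_\ell\by=-C_\ell^{\HH}\wtd{\bs{u}},\qquad \ell=1,2,3.
\]
The proper condition \cref{eq:proper:z} combined with \cref{eq:constraint:x1:2} gives $I^{(o)}C_\ell\by=0$ and hence $I^{(o)}C_\ell^{\HH}\wtd{\bs{u}}=0$ for every $\ell$.

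Next I exploit the outside rings. At $v\in\domain_o$ one has $C_\ell\by(v)=0=C_\ell^{\HH}\wtd{\bs{u}}(v)$, which are phase-shifted first-order recurrences in direction $\ell$ for $\by$ and $\wtd{\bs{u}}$ respectively. On a full ring $\calG_\ell\subset\domain_o$ the recurrence closes with monodromy $\ee^{\ii 2\pi\bk\cdot\what\ba_\ell}$. When this monodromy is not $1$ the recurrence is inconsistent and forces $\by|_{\calG_\ell}=0=\wtd{\bs{u}}|_{\calG_\ell}$; otherwise (the Bloch resonance $\bk\cdot\what\ba_\ell\in\mathbb{Z}$) the restrictions are pinned to single Fourier modes whose indices are tracked via \cref{lm:C:simul-diag-u}, and a finite entrywise case check on \cref{eq:constraint:x1:1} written in Fourier coordinates $\bx_1=(x_j)_j$ (using that $\bz_\ell\in\nullspace(\Lambda_\ell^{\HH})$ forces the corresponding Fourier component of $\bz_\ell$ to vanish whenever $\Lambda_\ell$ does not vanish at index $j$) again gives vanishing.

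Finally, propagate inward from $\calG_1\cup\calG_2\cup\calG_3$ to the rest of $\domain$. The coupled relation $C_\ell\by=-C_\ell^{\HH}\wtd{\bs{u}}$ lets one step one lattice unit along $\ba_\ell$ to transmit the value of $\by$ whenever the corresponding value of $\wtd{\bs{u}}$ is already known, and vice versa; the rigidity $I^{(o)}C_\ell\by=0$ controls the step whenever the adjacent vertex is still in $\domain_o$. Using the three axes simultaneously, together with the lattice-neighbor structure \cref{eq:lattice-neighbor-property} and an induction on distance to the union of rings, yields $\by\equiv 0$ on $\domain$, whence $\bx_1=0$ and $\calS=\set{0}$. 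The principal obstacle is precisely this propagation: although each single-direction step is transparent, orchestrating the three directions across the irregular boundary between $\domain_i$ and $\domain_o$ is the source of the ``lengthy and complex'' character of the full proof acknowledged in the paper.
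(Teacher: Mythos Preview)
Your translation to the spatial side is clean and correct up through the identity $C_\ell\by=-C_\ell^{\HH}\wtd{\bs u}$ with $\by=T\bx_1$, $\wtd{\bs u}=-T\bx_2$; this is exactly \cref{eq:constraint:x1x2} conjugated by $T$. Two genuine gaps remain, however.

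First, the closed-loop monodromy claim is only valid for $\ell=1$. For $\ell=2$ the wrap-around block in $K_{n_2}(\ee^{\ii 2\pi\bk\cdot\ba_2}J_1)$ applies $J_1$, which shifts the $i_1$ coordinate by $m_1$; hence the step $(i_1',n_2,i_3)\to(i_1'\!-\!m_1,1,i_3)$ leaves $\calG_2$ whenever $m_1\ne0$, and similarly for $\ell=3$. So on non-cubic lattices your ring argument does not even get started.

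Second, and more fundamental, the inward propagation cannot close. The relation $C_\ell\by=-C_\ell^{\HH}\wtd{\bs u}$ is a coupled second-order recurrence in each direction; knowing $\by$ and $\wtd{\bs u}$ on three one-dimensional rings supplies $O(n_1+n_2+n_3)$ data, whereas the unknowns live on an $n_1n_2n_3$ grid. The rigidity $I^{(o)}C_\ell\by=0$ only lets you advance through $\domain_o$, and nothing in your scheme crosses a region of $\domain_i$ that separates the rings from the rest of the grid. The acknowledgment that this is ``the principal obstacle'' is accurate: it is not a matter of bookkeeping but a missing mechanism.

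The paper proceeds very differently. It stays in Fourier coordinates and reads \cref{eq:constraint:x1:1} entrywise: writing $I+\delta_\ell\Lambda_\ell=\diag(d_{\ell,j})$ with $|d_{\ell,j}|=1$, one needs $d_{1,j}=d_{2,j}=d_{3,j}$ for $x_{1,j}\ne0$ (up to the $\bz_\ell$ contributions when some $\Lambda_\ell$ is singular). This collapses the support of $\bx_1$ to an index set $\calI_0$ of size at most $\gcd(\what n_1,\what n_2,\what n_3)$; the case analysis (Cases I--IV) handles the extra pieces $\calI_1,\calI_2,\calI_3$ coming from $\nullspace(\Lambda_\ell^{\HH})$. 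After this reduction, the constraint $I^{(o)}T\Lambda_1\bx_1=0$ becomes $I^{(o)}U_{\what n_{123}}[\cdot]=0$ (and analogues), and restricting to rows indexed by a single line $\calG_\ell$ yields a $V_{n_\ell\times \what n_{123}}(\eta_{\what n_{123}})$ Vandermonde/DFT block of full column rank. Thus each line $\calG_\ell$ is used not to seed a propagation but to kill a handful of Fourier coefficients via an explicit nonsingular submatrix. That support-collapse step is the idea your spatial-side argument lacks, and without it the three lines are simply too little data.
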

\begin{proof}
First, we will observe $\calS$.
To address it, we have to discuss several cases.

\newcommand\caseitem[1]{\textbf{Case #1}}
\newcommand\case[1]{\par\smallskip\caseitem{#1.}}
\case{I} 
$\Lambda_\ell,\ell=1,2,3$ are nonsingular.
%In the following of this subsection, we only deal with the case $\Lambda_\ell,\ell=1,2,3$ are nonsingular.
The only proper $z_1,z_2,z_3$ are all zero.

First, \cref{eq:constraint:x1:1} has nontrivial solutions
if and only if there exists $i_\ell\in[1,n_\ell],\ell=1,2,3$, such that
\[
	\eta_{n_3}^{\bk\cdot\what\ba_3+i_3}\eta_{n_2n_3}^{-m_2i_2}\eta_{n_1n_3}^{-\what m_1 i_1}\eta_{n_1n_2n_3}^{m_1m_2i_1}
	=\eta_{n_2}^{\bk\cdot\what\ba_2+i_2}\eta_{n_1n_2}^{-m_1i_1}
	=\eta_{n_1}^{\bk\cdot\what\ba_1+i_1}
	,
\]
and in this case, the $\coo{i_1,i_2,i_3}$-th entry of $x_1$ is nonzero.
%Since $\bk\cdot\what\ba_\ell\in[b_\ell-1,b_\ell]$, 
%$2\pi\ii\frac{b_1}{n_\ell}\le2\pi\ii \frac{i_1+\bk\cdot\what\ba_1}{n_1}\le2\pi\ii\frac{n_\ell+b_1}{n_\ell}$,
It is equivalent to
\begin{align*}
	\frac{i_1+\bk\cdot\what\ba_1}{n_1}-s_1
	&=\frac{i_2+\bk\cdot\what\ba_2-\frac{m_1}{n_1}i_1}{n_2}-s_2
	\\&=\frac{i_3+\bk\cdot\what\ba_3-\frac{m_2}{n_2}i_2-\frac{\what m_1}{n_1}i_1+\frac{m_1m_2}{n_1n_2}i_1}{n_3}-s_3
	=:\lambda
	\in[0,1)
	,
\end{align*}
for some $s_1,s_2,s_3\in \mathbb{Z}$.
%By \cref{eq:k.a}, we know $s_1=1$ if $i_1=n_1$ otherwise $s_1=0$.
In other words,
$x_1=I^{\calI_0}x_1$,
where
\[
	\calI_0:=\set{\coo{
			\lambda \what n_1-\kappa_1,
			\lambda \what n_2-\kappa_2,
			\lambda \what n_3-\kappa_3
		} : \lambda \what n_\ell-\kappa_\ell \in\mathbb{Z},
		0<\lambda<1
	},
\]
with
\begin{alignat*}{2}
	\what n_1&=n_1,               \quad&	\kappa_1&=\bk\cdot\ba_1, \\
	\what n_2&=n_2+m_1,           \quad&	\kappa_2&=\bk\cdot(\ba_2+\rho_1\ba_1)-m_1s_1, \\
	\what n_3&=n_3+m_2+\what m_1, \quad&	\kappa_3&=\bk\cdot(\ba_3+\rho_2\ba_2+\what \rho_1\ba_1)-m_2s_2-\what m_1s_1.
\end{alignat*}
%\begin{align*}
	%\calI_0:=\big\{\langle
		%&\lambda n_1-\bk\cdot\ba_1,
		%\\&\lambda (n_2+m_1)+m_1s_1-\bk\cdot(\ba_2+\rho_1\ba_1),
		%\\&\lambda (n_3+m_2+\what m_1)+m_2s_2+\what m_1s_1-\bk\cdot(\ba_3+\rho_2\ba_2+\what \rho_1\ba_1)
		%\rangle
		%: % \lambda n_\ell-\bki_\ell \in\mathbb{Z},
		%0<\lambda<1
	%\big\}.
%\end{align*}
Clearly, $x_1\ne0$ is equivalent to $(I^{\calI_0}_\sigma)^{\T} x_1\ne0$.
Moreover, if $\calI_0\ne\emptyset$, 
then it can be shown that\footnote{
	if $\coo{\lambda \what n_1-\kappa_1 ,\lambda \what n_2-\kappa_2 ,\lambda \what n_3-\kappa_3 }\in\calI_0$,
	then $\coo{\lambda' \what n_1-\kappa_1 ,\lambda' \what n_2-\kappa_2 ,\lambda' \what n_3-\kappa_3 }\in\calI_0$ for $\lambda'=\lambda+\frac{p}{\gcd(\what n_1,\what n_2,\what n_3)}$ with $p\in\mathbb{Z}$.
	On the other hand, if $\coo{\lambda \what n_1-\kappa_1 ,\lambda \what n_2-\kappa_2 ,\lambda \what n_3-\kappa_3 }\in\calI_0$ and $\coo{\lambda' \what n_1-\kappa_1 ,\lambda' \what n_2-\kappa_2 ,\lambda' \what n_3-\kappa_3 }\in\calI_0$,
	then $(\lambda-\lambda')\what n_\ell\in\mathbb{Z}$, which infers $(\lambda-\lambda')\gcd(\what n_1,\what n_2,\what n_3)\in\mathbb{Z}$ by B\'ezout's identity.
}
\[
	\calI_0=\set{\coo{\lambda \what n_1-\kappa_1 ,\lambda \what n_2-\kappa_2 ,\lambda \what n_3-\kappa_3 } :  \lambda =\lambda_0+\frac{p}{\what n_{123}}, p\in[0,\what n_{123})\cap\mathbb{Z}},
\]
with $\lambda_0\in[0,\frac{1}{\what n_{123}})$ satisfying $\lambda_0\what n_\ell-\kappa_\ell \in\mathbb{Z}$,
and $\what n_{123}=\gcd(\what n_1,\what n_2,\what n_3)$, the greatest common divisor of $\what n_1$, $\what n_2$ and $\what n_3$.
Note that $\lambda_0$ here is unique, and $\no{\calI_0}=\what n_{123}$.

Then, consider \cref{eq:constraint:x1:2}, namely, solving $I^{(o)}T\Lambda_1x_1=0$.
For ease, we mainly discuss the case that the related index sets are nonempty.
Inserting the solution of \cref{eq:constraint:x1:1} into \cref{eq:constraint:x1:2},
we have
\[
	0
	=I^{(o)}T\Lambda_1x_1
	=I^{(o)}T\Lambda_1I^{\calI_0} x_1
	=I^{(o)}TI^{\calI_0}\Lambda_1 x_1
	=I^{(o)}TI^{\calI_0}_\sigma [(I^{\calI_0}_\sigma)^{\T} \Lambda_1 x_1]
	.
\]
Recall the form of $T=[t_\ell]$ in \cref{eq:T}.
Write 
\[
	U_{\what n_{123}}=[u_{\what n_{123},p}]_{p=1,\dots,\what n_{123}},
	\qquad
	u_{\what n_{123},p} :=
	 V_{n_3}(\eta_{\what n_{123}}^p)\otimes  V_{n_2}(\eta_{\what n_{123}}^p)\otimes  V_{n_1}(\eta_{\what n_{123}}^p)
	.
\]
and $ t_0=t_{\coo{\lambda_0 \what n_1-\kappa_1 ,\lambda_0 \what n_2-\kappa_2 ,\lambda_0 \what n_3-\kappa_3 }}$.
It can be seen that 
\[
	t_{\coo{(\lambda_0+\frac{p}{\what n_{123}}) \what n_1-\kappa_1 ,(\lambda_0+\frac{p}{\what n_{123}}) \what n_2-\kappa_2 ,(\lambda_0+\frac{p}{\what n_{123}}) \what n_3-\kappa_3 }}
	= \diag(u_{\what n_{123},p})t_0
	= \diag(t_0)u_{\what n_{123},p}
	,
\]
and then $ TI^{\calI_0}_\sigma= \diag(t_0)U_{\what n_{123}}$.
Thus,
\begin{align*}
	0
	&= I^{(o)} TI^{\calI_0}_\sigma[(I^{\calI_0}_\sigma)^{\T} \Lambda_1 x_1]
	\\&= I^{(o)} \diag(t_0)U_{\what n_{123}}[(I^{\calI_0}_\sigma)^{\T} \Lambda_1 x_1]
	\\&= \diag(t_0)I^{(o)} U_{\what n_{123}}[(I^{\calI_0}_\sigma)^{\T} \Lambda_1I^{\calI_0}_\sigma][(I^{\calI_0}_\sigma)^{\T} x_1]
	.
\end{align*}
Noticing that each entry of $t_0$ is nonzero, and $(I^{\calI_0}_\sigma)^{\T} \Lambda_1I^{\calI_0}_\sigma$ is nonsingular,
$\calS=\set{0}$ is equivalent to $  I^{(o)} U_{\what n_{123}}x=0$ has only trivial solutions,
namely, $I^{(o)} U_{\what n_{123}}$ is of full column rank.

\case{II-1}
$\Lambda_\ell,\ell=2,3$ are nonsingular, but $\Lambda_1$ is singular.
By the form of $\Lambda_1$ in \cref{eq:Lambda},
\[
	\text{
		$\Lambda_1$ is singular $\Leftrightarrow$ $
		\frac{i_1+\bk\cdot\what\ba_1}{n_1}\in \mathbb{Z}
		$ for some $i_1$ $\Leftrightarrow \bk\cdot\what\ba_1=0$,
	}
\]
and for the case $i_1=n_1$.
The only proper $z_2,z_3$ are both zero.

First, \cref{eq:constraint:x1:1} has nontrivial solutions,
if and only if:
\begin{enumerate}
	\item 
		there exists $i_\ell\in[1,n_\ell],\ell=1,2,3$, $i_1\ne n_1$, such that
		\begin{align*}
			\frac{i_1+\bk\cdot\what\ba_1}{n_1}-s_1
			&=\frac{i_2+\bk\cdot\what\ba_2-\frac{m_1}{n_1}i_1}{n_2}-s_2
			\\&=\frac{i_3+\bk\cdot\what\ba_3-\frac{m_2}{n_2}i_2-\frac{\what m_1}{n_1}i_1+\frac{m_1m_2}{n_1n_2}i_1}{n_3}-s_3
			=:\lambda
			\in[0,1)
			,
		\end{align*}
		for some $s_1,s_2,s_3\in \mathbb{Z}$.
		For the case, the $\coo{i_1,i_2,i_3}$-th entry of $x_1$ is nonzero.
	\item
		there exists $i_\ell\in[1,n_\ell],\ell=2,3$, such that
		\begin{multline*}
			\frac{i_2+\bk\cdot\what\ba_2-\frac{m_1}{n_1}n_1}{n_2}-s_2
			\\=\frac{i_3+\bk\cdot\what\ba_3-\frac{m_2}{n_2}i_2-\frac{\what m_1}{n_1}n_1+\frac{m_1m_2}{n_1n_2}n_1}{n_3}-s_3
			=:\lambda
			\in[0,1)
			,
		\end{multline*}
		for some $s_1,s_2,s_3\in \mathbb{Z}$.
		For the case, the $\coo{n_1,i_2,i_3}$-th entry of $x_1$ is decided by $z_1$,
		where $z_1\in \range(I^{\calI_1}_\sigma)$, and
		\begin{align*}
			\calI_1&:=\set{\coo{
					n_1,
					\lambda \what n_2-\kappa_2,
					\lambda \what n_3-\kappa_3
				} : \lambda \what n_\ell-\kappa_\ell \in\mathbb{Z},
				0<\lambda<1
			}
			\\&=
			\set{\coo{n_1,\lambda \what n_2-\kappa_2 ,\lambda \what n_3-\kappa_3 } :  \lambda =\lambda_1+\frac{p}{\what n_{23}}, p\in[0,\what n_{23})\cap\mathbb{Z}},
		\end{align*}
		and 
		\begin{alignat*}{2}
			\what n_2&=n_2,     \quad&	\kappa_2&=\bk\cdot(\ba_2+\rho_1\ba_1)-m_1, \\
			\what n_3&=n_3+m_2, \quad&	\kappa_3&=\bk\cdot(\ba_3+\rho_2\ba_2+\what \rho_1\ba_1)-m_2s_2-\what m_1.
		\end{alignat*}
		In detail, $(I^{\calI_1}_\sigma)^{\HH}x_1=-\delta_2[(I^{\calI_1}_\sigma)^{\HH}\Lambda_2I^{\calI_1}_\sigma]^{-1}(I^{\calI_1}_\sigma)^{\HH}z_1$.
\end{enumerate}
In other words,
$x_1=I^{\calI_0\cup\calI_1}x_1$.
Moreover, if $\calI_1\ne\emptyset$, 
with $\lambda_1\in[0,\frac{1}{\what n_{23}})$ satisfying $\lambda_0\what n_\ell-\kappa_\ell \in\mathbb{Z}$,
and $\what n_{23}=\gcd(\what n_2,\what n_3)$.
Note that $\lambda_1$ here is unique, and $\no{\calI_1}=\what n_{23}$.
Clearly, $x_1\ne0$ is equivalent to $(I^{\calI_0\cup\calI_1}_\sigma)^{\T} x_1\ne0$.

Then, consider \cref{eq:proper:z,eq:constraint:x1:2}, namely, solving $I^{(o)}T\Lambda_1x_1=0,I^{(o)}Tz_1=0$.
For ease, we mainly discuss the case in which the related index sets are nonempty.
Inserting the solution of \cref{eq:constraint:x1:1} into \cref{eq:constraint:x1:2},
we have
\begin{align*}
	0
	=
	\begin{bmatrix}
		I^{(o)}T\Lambda_1x_1\\
		I^{(o)}Tz_1\\
	\end{bmatrix}
	=
	\begin{bmatrix}
		I^{(o)}T\Lambda_1I^{\calI_0} x_1\\
		I^{(o)}TI^{\calI_1} z_1\\
	\end{bmatrix}
	&=
	\begin{bmatrix}
		I^{(o)}TI^{\calI_0}_\sigma [(I^{\calI_0}_\sigma)^{\T} \Lambda_1 x_1]\\
		I^{(o)}TI^{\calI_1}_\sigma [(I^{\calI_1}_\sigma)^{\T} z_1]\\
	\end{bmatrix}
	\\&	=
	\begin{bmatrix}
		I^{(o)}TI^{\calI_0}_\sigma &\\
		&I^{(o)}TI^{\calI_1}_\sigma \\
	\end{bmatrix}
	\begin{bmatrix}
		(I^{\calI_0}_\sigma)^{\T} \Lambda_1 x_1\\
		(I^{\calI_1}_\sigma)^{\T} z_1\\
	\end{bmatrix}
	.
\end{align*}
Similarly, 
$\calS=\set{0}$ is equivalent to $ 
\begin{bmatrix}
	I^{(o)}U_{\what n_{123}}&\\
	&I^{(o)}U_{\what n_{23}}\\
\end{bmatrix}$ is of full column rank,
where
\[
	U_{\what n_{23}}=[u_{\what n_{23},p}]_{p=1,\dots,\what n_{23}},
	\qquad
	u_{\what n_{13},p} :=
	 V_{n_3}(\eta_{\what n_{23}}^p)\otimes  V_{n_2}(\eta_{\what n_{13}}^p)\otimes  V_{n_1}(1)
	.
\]

\case{II-2}
$\Lambda_\ell,\ell=1,3$ are nonsingular, but $\Lambda_2$ is singular.
By the form of $\Lambda_2$ in \cref{eq:Lambda}, we have:
\begin{enumerate}
	\item $m_1=0$:
		\[
			\text{
				$\Lambda_2$ is singular $\Leftrightarrow$ $
				\frac{i_2+\bk\cdot\what\ba_2}{n_2}\in \mathbb{Z}
				$ for some $i_2$ $\Leftrightarrow \bk\cdot\what\ba_2=0$,
			}
		\]
		and everything is similar to \caseitem{II-1}.
		$\calS=\set{0}$ is equivalent to the matrix $ 
		\begin{bmatrix}
			I^{(o)}U_{\what n_{123}}&\\
			&I^{(o)}U_{\what n_{13}}\\
		\end{bmatrix}$ is of full column rank,
		where
		\[
			U_{\what n_{13}}=[u_{\what n_{13},p}]_{p=1,\dots,\what n_{13}},
			\qquad
			u_{\what n_{13},p} :=
			 V_{n_3}(\eta_{\what n_{13}}^p)\otimes  V_{n_2}(1)\otimes  V_{n_1}(\eta_{\what n_{13}}^p)
			.
		\]
	\item $m_1\ne0$:
		\begin{align*}
			\text{ $\Lambda_2$ is singular}
			&\Leftrightarrow 
			\text{$\frac{i_2+\bk\cdot\what\ba_2-\frac{m_1}{n_1}i_1}{n_2}\in \mathbb{Z} $ for some $i_1,i_2$} 
			\\ &\Leftrightarrow \wtd n_2:=\frac{n_1}{m_1}\bk\cdot\what\ba_2\in \mathbb{Z},
		\end{align*}
and for the case $i_1=\wtd n_2, i_2=n_2$.
The only proper $z_1,z_3$ are both zero.

First, \cref{eq:constraint:x1:1} has nontrivial solutions,
if and only if:
\begin{enumerate}
	\item 
		there exists $i_\ell\in[1,n_\ell],\ell=1,2,3$, $(i_1,i_2)\ne (\wtd n_2, n_2)$, such that
		\begin{align*}
			\!\!\!\!\!\frac{i_1+\bk\cdot\what\ba_1}{n_1}-s_1
			&=\frac{i_2+\bk\cdot\what\ba_2-\frac{m_1}{n_1}i_1}{n_2}-s_2
			\\&=\frac{i_3+\bk\cdot\what\ba_3-\frac{m_2}{n_2}i_2-\frac{\what m_1}{n_1}i_1+\frac{m_1m_2}{n_1n_2}i_1}{n_3}-s_3
			=:\lambda
			\in[0,1)
			,
		\end{align*}
		for some $s_1,s_2,s_3\in \mathbb{Z}$.
		For the case, the $\coo{i_1,i_2,i_3}$-th entry of $x_1$ is nonzero.
	\item $(i_1,i_2)= (\wtd n_2, n_2)$.
		For the case, the $\coo{\wtd n_2,n_2,i_3}$-th entry of $x_1$ is decided by $z_2$,
		where $z_2\in \range(I^{\calI_2^m}_\sigma)$, and
		\begin{align*}
			\calI_2^m&:=\set{\coo{
					\wtd n_2,
					n_2,
					i_3
				}}
				.
		\end{align*}
		In detail, $(I^{\calI_2^m}_\sigma)^{\HH}x_1=-\delta_3[(I^{\calI_2^m}_\sigma)^{\HH}\Lambda_3I^{\calI_2^m}_\sigma]^{-1}(I^{\calI_2^m}_\sigma)^{\HH}z_2$.
\end{enumerate}
In other words,
$x_1=I^{\calI_0\cup\calI_2^m}x_1$.
Note that $\no{\calI_2^m}= n_3$.
Clearly, $x_1\ne0$ is equivalent to $(I^{\calI_0\cup\calI_2^m}_\sigma)^{\T} x_1\ne0$.

Then, consider \cref{eq:proper:z,eq:constraint:x1:2}, namely, solving $I^{(o)}T\Lambda_1x_1=0,I^{(o)}Tz_2=0$.
The steps proceed similarly to \caseitem{II-1}.
$\calS=\set{0}$ is equivalent to $ 
\begin{bmatrix}
	I^{(o)}U_{\what n_{123}}&\\
	&I^{(o)}U_{n_{3}}\\
\end{bmatrix}$ is of full column rank,
where
\[
	U_{n_{3}}=[u_{n_{3},p}]_{p=1,\dots,n_{3}},
	\qquad
	u_{n_{3},p} :=
	 V_{n_3}(\eta_{n_{3}}^p)\otimes  V_{n_2}(1)\otimes  V_{n_1}(1)
	.
\]
\end{enumerate}

\case{II-3}
$\Lambda_\ell,\ell=1,2$ are nonsingular, but $\Lambda_3$ is singular.
By the form of $\Lambda_3$ in \cref{eq:Lambda}, we have:
\begin{enumerate}
	\item $\what m_1=0, m_2=0$:
		\begin{align*}
			\text{$\Lambda_3$ is singular}
			&\Leftrightarrow
		\text{$ \frac{i_3+\bk\cdot\what\ba_3}{n_2}\in \mathbb{Z} $ for some $i_3$}
		\\&\Leftrightarrow \bk\cdot\what\ba_3=0,
		\end{align*}
		and everything is similar to \caseitem{II-1}.
		$\calS=\set{0}$ is equivalent to the matrix $ 
		\begin{bmatrix}
			I^{(o)}U_{\what n_{123}}&\\
			&I^{(o)}U_{\what n_{12}}\\
		\end{bmatrix}$ is of full column rank,
		where
		\[
			U_{\what n_{12}}=[u_{\what n_{12},p}]_{p=1,\dots,\what n_{12}},
			\qquad
			u_{\what n_{12},p} :=
			 V_{n_3}(1)\otimes  V_{n_2}(\eta_{\what n_{12}}^p)\otimes  V_{n_1}(\eta_{\what n_{12}}^p)
			.
		\]
	\item $\what m_1\ne0, m_2=0$:
		\begin{align*}
			\text{$\Lambda_3$ is singular}
			&\Leftrightarrow
			\text{$ \frac{i_3+\bk\cdot\what\ba_3-\frac{\what m_1}{n_1}i_1}{n_3} \in \mathbb{Z} $ for some $i_1,i_3$} 
			\\&\Leftrightarrow \wtd n_{3,1}:=\frac{n_1}{\what m_1}\bk\cdot\what\ba_3\in \mathbb{Z},
		\end{align*}
		and everything is similar to \caseitem{II-2}(2).
		$\calS=\set{0}$ is equivalent to the matrix $ 
		\begin{bmatrix}
			I^{(o)}U_{\what n_{123}}&\\
			&I^{(o)}U_{n_{2}}\\
		\end{bmatrix}$ is of full column rank,
		where
		\[
			U_{n_{2}}=[u_{n_{2},p}]_{p=1,\dots,n_{2}},
			\qquad
			u_{n_{2},p} :=
			 V_{n_3}(1)\otimes  V_{n_2}(\eta_{n_{2}}^p)\otimes  V_{n_1}(1)
			.
		\]
	\item $m_2\ne0,\what m_1=0, m_1=0$:
		\begin{align*}
			\text{ $\Lambda_3$ is singular }
			&\Leftrightarrow
			\text{$ \frac{i_3+\bk\cdot\what\ba_3-\frac{m_2}{n_2}i_2}{n_3} \in \mathbb{Z} $ for some $i_2,i_3$}
			\\&\Leftrightarrow \wtd n_{3,2}:=\frac{n_1}{\what m_1}\bk\cdot\what\ba_3\in \mathbb{Z},
		\end{align*}
		and everything is similar to \caseitem{II-2}(2).
		$\calS=\set{0}$ is equivalent to the matrix $ 
		\begin{bmatrix}
			I^{(o)}U_{\what n_{123}}&\\
			&I^{(o)}U_{\what n_{1}}\\
		\end{bmatrix}$ is of full column rank,
		where
		\[
			U_{n_{1}}=[u_{n_{1},p}]_{p=1,\dots,n_{1}},
			\qquad
			u_{n_{1},p} :=
			 V_{n_3}(1)\otimes  V_{n_2}(1)\otimes  V_{n_1}(\eta_{n_{1}}^p)
			.
		\]
	\item $m_2\ne0$, $\what m_1,m_1$ not both zero:
		\[
			\!\!\!\text{ $\Lambda_3$ is singular }
			\Leftrightarrow
			\text{$ \frac{i_3+\bk\cdot\what\ba_3-\frac{m_2}{n_2}i_2-\frac{\what m_1}{n_1}i_1+\frac{m_1m_2}{n_1n_2}i_1}{n_3} \in \mathbb{Z} $ for some $i_1,i_2,i_3$} %\Leftrightarrow \wtd n_2:=\frac{n_1}{m_1}\bk\cdot\what\ba_2\in \mathbb{Z}
				,
		\]
		and for the case where there is only one choice $(i_1,i_2, i_3)$.
		Write the single-element set as $\calI_3^m$.
The only proper $z_1,z_2$ are both zero.

First, \cref{eq:constraint:x1:1} has nontrivial solutions,
if and only if:
\begin{enumerate}
	\item 
		there exists $i_\ell\in[1,n_\ell],\ell=1,2,3$, $(i_1,i_2,i_3)\notin \calI_3^m$, such that
		\begin{align*}
			\!\!\!\!\!\frac{i_1+\bk\cdot\what\ba_1}{n_1}-s_1
			&=\frac{i_2+\bk\cdot\what\ba_2-\frac{m_1}{n_1}i_1}{n_2}-s_2
			\\&=\frac{i_3+\bk\cdot\what\ba_3-\frac{m_2}{n_2}i_2-\frac{\what m_1}{n_1}i_1+\frac{m_1m_2}{n_1n_2}i_1}{n_3}-s_3
			=:\lambda
			\in[0,1)
			,
		\end{align*}
		for some $s_1,s_2,s_3\in \mathbb{Z}$.
		For the case, the $\coo{i_1,i_2,i_3}$-th entry of $x_1$ is nonzero.
	\item $(i_1,i_2,i_3)\in \calI_3^m$.
		For the case, the $\coo{\wtd n_2,n_2,i_3}$-th entry of $x_1$ is decided by $z_3$,
		where $z_2\in \range(I^{\calI_3^m}_\sigma)$.
		In detail, 
		\[
			(I^{\calI_3^m}_\sigma)^{\HH}x_1=-\delta_3[(I^{\calI_3^m}_\sigma)^{\HH}\Lambda_3I^{\calI_3^m}_\sigma]^{-1}(I^{\calI_3^m}_\sigma)^{\HH}z_3
		.
		\]
\end{enumerate}
In other words,
$x_1=I^{\calI_0\cup\calI_3^m}x_1$.
Note that $\no{\calI_3^m}= n_1$.
Clearly, $x_1\ne0$ is equivalent to $(I^{\calI_0\cup\calI_3^m}_\sigma)^{\T} x_1\ne0$.

Then, consider \cref{eq:proper:z,eq:constraint:x1:2}, namely, solving $I^{(o)}T\Lambda_1x_1=0,I^{(o)}Tz_3=0$.
The steps proceed similarly to \caseitem{II-1}.
$\calS=\set{0}$ is equivalent to $ 
\begin{bmatrix}
	I^{(o)}U_{\what n_{123}}&\\
	&I^{(o)}U_{1}\\
\end{bmatrix}$ is of full column rank,
where
\[
	U_{1}=[u_{1,1}],
	\qquad
	u_{1,1} :=
	 V_{n_3}(1)\otimes  V_{n_2}(1)\otimes  V_{n_1}(1)
	.
\]

\end{enumerate}

\case{III-3}
\addtolength\arraycolsep{-4pt}
$\Lambda_\ell,\ell=1,2$ are singular, but $\Lambda_3$ is nonsingular.
The only proper $z_3=0$.
By the form of $\Lambda_1,\Lambda_2$ in \cref{eq:Lambda}, we have:
\begin{enumerate}
	\item $m_1=0$:
		it is similar to the combination of \caseitem{II-1} and \caseitem{II-2}(1).
		\[
			\text{
				$\Lambda_1$ is singular $\Leftrightarrow$ $
				\frac{i_1+\bk\cdot\what\ba_1}{n_1}\in \mathbb{Z}
				$ for some $i_1$ $\Leftrightarrow \bk\cdot\what\ba_1=0$,
			}
		\]
		\[
			\text{
				$\Lambda_2$ is singular $\Leftrightarrow$ $
				\frac{i_2+\bk\cdot\what\ba_2}{n_2}\in \mathbb{Z}
				$ for some $i_2$ $\Leftrightarrow \bk\cdot\what\ba_2=0$.
			}
		\]
		In detail, \[
			(I^{\calI_1}_\sigma)^{\HH}x_1=-\delta_3[(I^{\calI_1}_\sigma)^{\HH}\Lambda_3I^{\calI_1}_\sigma]^{-1}(I^{\calI_1}_\sigma)^{\HH}z_1
		,
		\]
		and \[
			(I^{\calI_2}_\sigma)^{\HH}x_1=-\delta_3[(I^{\calI_2}_\sigma)^{\HH}\Lambda_3I^{\calI_2}_\sigma]^{-1}(I^{\calI_2}_\sigma)^{\HH}z_2.
		\]
				This forces 
		$(I^{\calI_1\cap\calI_2}_\sigma)^{\HH}z_1=(I^{\calI_1\cap\calI_2}_\sigma)^{\HH}z_2$.

		Then, consider \cref{eq:proper:z,eq:constraint:x1:2}, namely, solving $I^{(o)}T\Lambda_1x_1=0,I^{(o)}Tz_1=0,I^{(o)}Tz_2=0$.
		For ease, we mainly discuss the case in which the related index sets are nonempty.
		Inserting the solution of \cref{eq:constraint:x1:1} into \cref{eq:constraint:x1:2},
		we have
		\[
			0
			=
			\begin{bmatrix}
				I^{(o)}T\Lambda_1x_1\\
				I^{(o)}Tz_1\\
				I^{(o)}Tz_2\\
				(I^{\calI_1\cap\calI_2}_\sigma)^{\HH}(z_1-z_2)\\
			\end{bmatrix}
			=
			\begin{bmatrix}
				I^{(o)}TI^{\calI_0}_\sigma [(I^{\calI_0}_\sigma)^{\T} \Lambda_1 x_1]\\
				I^{(o)}TI^{\calI_1}_\sigma [(I^{\calI_1}_\sigma)^{\T} z_1]\\
				I^{(o)}TI^{\calI_2}_\sigma [(I^{\calI_2}_\sigma)^{\T} z_2]\\
				(I^{\calI_1\cap\calI_2}_\sigma)^{\HH}(z_1-z_2)\\
			\end{bmatrix}
			=
			\wtd T
			\begin{bmatrix}
				(I^{\calI_0}_\sigma)^{\T} \Lambda_1 x_1\\
				(I^{\calI_1\setminus\calI_2}_\sigma)^{\T} z_1\\
				(I^{\calI_1\cap\calI_2}_\sigma)^{\T} z_1\\
				(I^{\calI_2\setminus\calI_1}_\sigma)^{\T} z_2\\
				(I^{\calI_1\cap\calI_2}_\sigma)^{\T} z_2\\
			\end{bmatrix}
			,
		\]
		where $
		\wtd T=\!
			\begin{bmatrix}
				I^{(o)}TI^{\calI_0}_\sigma &&&&\\
				&I^{(o)}TI^{\calI_1\setminus\calI_2}_\sigma &I^{(o)}TI^{\calI_1\cap\calI_2}_\sigma &&\\
				&&&I^{(o)}TI^{\calI_2\setminus\calI_1}_\sigma &I^{(o)}TI^{\calI_1\cap\calI_2}_\sigma \\
				&&I &&-I \\
			\end{bmatrix}$.
		This equation has only trivial solutions, as long as $
		\begin{bmatrix}
			I^{(o)}TI^{\calI_0}_\sigma &&\\
			&I^{(o)}TI^{\calI_1}_\sigma &\\
			&&I^{(o)}TI^{\calI_2}_\sigma\\
		\end{bmatrix}
		$
		is of full column rank.
		Thus
		$\calS=\set{0}$, as long as $
		\begin{bmatrix}
			I^{(o)}U_{\what n_{123}}&&\\
			&I^{(o)}U_{\what n_{23}}&\\
			&&I^{(o)}U_{\what n_{13}}\\
		\end{bmatrix}$ is of full column rank.

	\item $m_1\ne0$:
		it is similar to \caseitem{III-3}(1), considering the combination of \caseitem{II-1} and \caseitem{II-2}(2).
		Thus $\calS=\set{0}$, as long as $
		\!\begin{bmatrix}
			I^{(o)}U_{\what n_{123}}&&\\
			&I^{(o)}U_{\what n_{23}}&\\
			&&I^{(o)}U_{\what n_{3}}\\
		\end{bmatrix}$ is of full column rank.
\end{enumerate}

\case{III-2}
$\Lambda_\ell,\ell=1,3$ are singular, but $\Lambda_2$ is nonsingular.
It is similar to \caseitem{III-1}, considering the combination of \caseitem{II-1} and \caseitem{II-3}, 
\case{III-1}.
$\Lambda_\ell,\ell=2,3$ are singular, but $\Lambda_1$ is nonsingular.
It is similar to \caseitem{III-1}, considering the combination of \caseitem{II-2} and \caseitem{II-3}.

\case{IV}
$\Lambda_\ell,\ell=1,2,3$ are all singular.
\begin{enumerate}
	\item $m_1=0,\what m_1=0,m_2=0$:
		it is similar to \caseitem{III-1}(1), considering the combination of \caseitem{II-1}, \caseitem{II-2} and \caseitem{II-3}.
		Since $\Lambda_q=\Lambda_1^{\HH}\Lambda_1+\Lambda_2^{\HH}\Lambda_2+\Lambda_3^{\HH}\Lambda_3\succ0$, 
		we know $\calI_1\cap\calI_2\cap\calI_3=\emptyset$.
		Note that 
		$(I^{\calI_1\cap\calI_2}_\sigma)^{\HH}z_1=(I^{\calI_1\cap\calI_2}_\sigma)^{\HH}z_2$,
		$(I^{\calI_1\cap\calI_3}_\sigma)^{\HH}z_1=(I^{\calI_1\cap\calI_3}_\sigma)^{\HH}z_3$, and
		$(I^{\calI_3\cap\calI_2}_\sigma)^{\HH}z_3=(I^{\calI_3\cap\calI_2}_\sigma)^{\HH}z_2$.
		Thus,
		$\calS=\set{0}$, as long as $
		\begin{bmatrix}
			I^{(o)}U_{\what n_{123}}&&&\\
			&I^{(o)}U_{\what n_{23}}&&\\
			&&I^{(o)}U_{\what n_{13}}&\\
			&&&I^{(o)}U_{\what n_{12}}\\
		\end{bmatrix}$ is of full column rank.
	\item other cases: everything is similar.
\end{enumerate}
\addtolength\arraycolsep{4pt}

To summarize, $\calS=\set{0}$, as long as all the matrices below are of full column rank:
\[
	I^{(o)}U_{\what n_{123}},
	I^{(o)}U_{\what n_{12}},
	I^{(o)}U_{\what n_{23}},
	I^{(o)}U_{\what n_{13}},
	I^{(o)}U_{n_{1}},
	I^{(o)}U_{n_{2}},
	I^{(o)}U_{n_{3}},
	I^{(o)}U_{1}.
\]

Under the condition,
\begin{enumerate}
	\item $I^{(o)}U_{1}$ is of full rank because there is only one column, and each entry is $1$.
	\item if $\calG_1=\set{\coo{i_1,i_2,i_3}: i_1\in \mathbb{Z}}\subset \domain_o$:\\
		then $(I^\calG_\sigma)^{\T}I^{(o)}U_{\what n_{123}}=\eta_{\what n_{123}}^{i_2+i_3-2} V_{n_1\times \what n_{123}}(\eta_{\what n_{123}})$.
		Since $\abs{\eta_{\what n_{123}}}=1$ and the upper square block of $ V_{n_1\times \what n_{123}}(\eta_{\what n_{123}})$ is $ V_{\what n_{123}\times \what n_{123}}(\eta_{\what n_{123}})$, the DFT matrix of size $\what n_{123}$ that is nonsingular, we know $(I^\calG_\sigma)^{\T}I^{(o)}U_{\what n_{123}}$ is of full column rank, and so is $I^{(o)}U_{\what n_{123}}$.
		Similarly, $ I^{(o)}U_{\what n_{12}}, I^{(o)}U_{\what n_{13}}, I^{(o)}U_{n_{1}}$ are of full column rank.
	\item if $\calG_2=\set{\coo{i_1,i_2,i_3}: i_2\in \mathbb{Z}}\subset \domain_o$:\\
		then similarly
		$I^{(o)}U_{\what n_{123}}, I^{(o)}U_{\what n_{12}}, I^{(o)}U_{\what n_{23}}, I^{(o)}U_{n_{2}}$ are of full column rank.
	\item if $\calG_3=\set{\coo{i_1,i_2,i_3}: i_3\in \mathbb{Z}}\subset \domain_o$:\\
		then similarly
		$I^{(o)}U_{\what n_{123}}, I^{(o)}U_{\what n_{13}}, I^{(o)}U_{\what n_{23}}, I^{(o)}U_{n_{3}}$ are of full column rank.
\end{enumerate}
As a result, we have the lemma.
\end{proof}

{\small
\bibliographystyle{siam}
\bibliography{eigpasteur}
}
\end{document}